\newcommand{\vol}{\textnormal{vol}}
\def\f12{\frac 1 2}
\def\a{\alpha}
\def\b{\beta}
\def\ga{\gamma}
\def\Ga{\Gamma}
\def\ep{\epsilon}
\def\la{\lambda}
\def\si{\sigma}
\def\Si{\Sigma}
\def\om{\omega}
\def\Om{\Omega}
\def\th{\theta}
\def\Lb{\underline{L}}
\def\pa{\partial}
\def\les{\lesssim}
\def\f12{\frac 1 2}
\newcommand{\lap}{\mbox{$\Delta \mkern-13mu /$\,}}
\newcommand{\nabb}{\mbox{$\nabla \mkern-13mu /$\,}}
\newcommand{\paL}{\mbox{$\pa \mkern-13mu /$\,}}
\newtheorem{thm}{Theorem}
\newtheorem{prop}{Proposition}
\newtheorem{lem}{Lemma}
\newtheorem{cor}{Corollary}
\newtheorem{remark}{Remark}
\begin{document}

\title{On the quasilinear wave equations in time dependent inhomogeneous media
\footnote{This work is part of the author's Ph.D. thesis at Princeton University.}}
\author{Shiwu Yang}
\date{}
\maketitle
\begin{abstract}
We consider the problem of small data global existence for quasilinear wave equations with null condition on a class of Lorentzian
manifolds $(\mathbb{R}^{3+1}, g)$ with \textbf{time dependent} inhomogeneous metric. We show that sufficiently small data give rise to a unique global solution for metric
which is merely $C^1$ close to the Minkowski metric inside some large cylinder $\{\left.(t, x)\right||x|\leq R\}$
and approaches the Minkowski metric weakly as $|x|\rightarrow \infty$. Based on this result, we give weak but sufficient
conditions on a given large solution of
quasilinear wave equations such that the solution is globally stable under perturbations of initial data.
\end{abstract}

\section{Introduction}

In this paper, we study the Cauchy problem for the quasilinear wave equations
\begin{equation}
 \label{QUASIEQ}
\begin{cases}
 \Box_{g} \phi+g^{\mu\nu\ga}\pa_\ga\phi\cdot \pa_{\mu\nu }\phi=A^{\mu\nu}\pa_\mu\phi\pa_\nu\phi+F(\phi, \pa\phi),\\
\phi(0, x)=\phi_0(x), \quad \pa_t\phi(0, x)=\phi_1(x)
\end{cases}
\end{equation}
on a Lorentzian manifold $(\mathbb{R}^{3+1}, g)$, where $\Box_g$ is
the covariant wave operator for the metric $g$. The nonlinearities
are assumed to satisfy the null condition: $g^{\mu\nu\ga}$,
$A^{\mu\nu}$ are constants such that
$g^{\a\b\ga}\xi_\a\xi_\b\xi_\ga=0$, $A^{\mu\nu}\xi_\mu\xi_\nu=0$
whenever
 $\xi_0^2=\xi_1^2+\xi_2^2+\xi_3^2$ and $F$ is at least cubic in terms of $\phi$, $\pa\phi$ for small $\phi$, $\pa\phi$.

\bigskip

The Cauchy problem for nonlinear wave equations with general
quadratic nonlinearities on $\mathbb{R}^{n+1}$ has been studied
extensively. In $4+1$ or higher dimensions, the decay rate of the
solution to a linear wave equation is sufficient to
obtain the small data global existence result, see e.g. \cite{klgex},
\cite{klinvar}, \cite{kl-ponce}, \cite{ge-shatah} and reference therein. However, in $3+1$
dimensions, one can only show the almost global existence result
\cite{kl-johnalmostge}, \cite{klinvar}. In fact, in
\cite{johnblowup}, F. John showed that any nontrivial $C^3$ solution
of the equation
$$\Box \phi=(\pa_t\phi)^2
$$
with compactly supported initial data blows up in finite time.
Nevertheless, a sufficient condition on the quadratic
nonlinearities, which guarantees the small data global existence
result, is the celebrated null condition introduced by S. Klainerman
\cite{klNullc}. Under this condition, D. Christodoulou
\cite{ChDNull} and S. Klainerman \cite{klNull} independently proved
the small data global existence result.

\bigskip

The approach of \cite{ChDNull} used the conformal method, which
relies on the conformal embedding of Minkowski space to the Einstein cylinder
$R\times S^{3}$. S. Klainerman used the vector field method \cite{klinvar}, which
connects the symmetries of the flat $\mathbb{R}^{n+1}$ with the
quantitative decay properties of solutions of linear wave
equations. The vector fields, used as commutators or multipliers, are
the killing and conformal killing vector fields in
$\mathbb{R}^{n+1}$ and can be given explicitly
\begin{equation}
\label{Lorenzinv}
\Ga=\{ \Om_{ij}=x_i\pa_j-x_j\pa_i,\quad
L_i=t\pa_i+x_i\pa_t, \quad \pa_\a, \quad K=(t^2+r^2)\pa_t +
2tr\pa_r, \quad S=t\pa_t+r\pa_r\}.
\end{equation}
Based on this vector field method, there have been an extensive
literature on generalizations and variants of D. Christodoulou and
S. Klainerman's work, in particular on the multiple speed problems
e.g. \cite{klmulti}, \cite{sideris-multispeed}, \cite{soggemulti} and
obstacle problems e.g. \cite{sogge-metcalfe-nakamura}, \cite{soggestar},
\cite{soggestaralm}, \cite{sogge-metcalfe}. All these works used the scaling vector field $S$.

\bigskip

Another application of the vector field method is to the wave
equations on a Lorentzian manifold $(\mathcal{M}, g)$ with metric
$g$, which may also depend on the solution of the equation. The
motivation for studying such problems arises from studying the
problem of global nonlinear stability of Minkowski space in wave
coordinates. The stability of Minkowski space was first established
by Christodoulou-Klainerman by recasting the problem as a system of
Bianchi equations for the curvature tensor \cite{kcg}. Later,
Lindblad-Rodnianski \cite{SMigor} obtained a different proof in wave coordinates, in which the problem was
formulated as a system of quasilinear wave equations for the
components of the metric perturbation. This is an example that the
background metric $g(\phi)$ depends on the solution $\phi$ of the
wave equation, where $g(0)=m_0$ (the Minkowski metric). The
quasilinear part of such equations
 $g^{\a\b}(\phi)\pa_{\a\b}\phi$ never satisfies the null condition defined in the original work of
 S. Klainerman \cite{klNullc}, \cite{klNull}. Nevertheless, besides the global nonlinear stability of Minkowski space aforementioned
, the nonlinear wave equations
\begin{equation*}
 g^{\a\b}(\phi)\pa_{\a\b}\phi=0
\end{equation*}
on $\mathbb{R}^{3+1}$, which was studied by H. Lindblad in
\cite{gx-lindblad}, \cite{gx-lindblad2}, also admits small data
global solutions. A particular case
\[
 \pa_{tt}\phi-(1+\phi)^2\Delta \phi=0
\]
has been investigated by S. Alinhac
\cite{alinhac-example}.

\bigskip

The linear and nonlinear wave equations on a Lorentzian manifold
with given metric $g$ have also received considerable attention, in
particular on black hole spacetimes. For the linear wave equation on
$\mathbb{R}^{3+1}$, S. Alinhac \cite{alinhac-freedecay} showed that
the solution has the decay properties similar to those of a solution
of a linear wave equation on Minkowski space provided that the
metric $g$ approaches the Minkowski metric suitably as
$t\rightarrow \infty$. In \cite{lodecTataru}, D. Tataru proved the
local decay of the solution but with the assumption that the
background metric is stationary or time independent. For the decay
of solution of linear wave equations on Kerr spacetimes (including
Schwarzschild spacetimes), we refer the readers to \cite{dr3},
\cite{improvLuk}, \cite{jnotes}, \cite{blueHidden} and references therein.
For the nonlinear equations, J. Luk \cite{smallglobLuk} proved the
small data global existence result for semilinear wave equations
with derivatives on slowly rotating Kerr spacetimes. In a recent
work \cite{glsoChengbo}, Wang-Yu proved the small data global
existence result for quasilinear wave equations on static
spacetimes, which are more restrictive than stationary ones.

\bigskip

A common feature of these problems is that the background metric $g$
settles down to a stationary metric either by the assumptions (Kerr
spacetimes are stationary) or, for the case when the metric
$g(\phi)$ depends on the solution, by the assumption $g(0)=m_0$ and
the expected convergence $\phi(t, x)\rightarrow 0$ as
$t\rightarrow\infty$. The need for such convergence,
or at least convergence of the time derivative of the metric to $0$, is dictated by the vector field method. All applications of the vector field method require commutations
with some generators of the conformal symmetries of Minkowski space. In particular, we note that all the applications have used the scaling vector field $S=t\pa_t +r\pa_r$ or the
conformal killing vector field $K=(t^2+r^2)\pa_t+2tr \pa_r$ as
commutators. For the problem
\[
 \Box_g \phi=F,
\]
the error term coming from the commutation with $S$ or $K$ would be of the form
$t \pa_t g^{\a\b}\pa_{\a\b}\phi$ or $t^2 \pa_t g^{\a\b}\pa_{\a\b}\phi$ which leads to the requirement that $t\pa_t g$ is at least bounded and thus the time derivative of the metric decays to $0$ as $t\rightarrow \infty$.

\bigskip

To our knowledge, the first work on nonlinear wave equations on time
dependent inhomogeneous background is the author's work
\cite{yang1}. It was shown that if the background metric is merely
$C^1$ close to the Minkowski metric inside the cylinder $\{\left.(t,
x)\right||x|\leq R\}$ and is identical to the Minkowski metric
outside, then the semilinear wave equations with derivatives
satisfying the null condition admit small data global solutions.
This result relies on a new method for proving the decay of the
solutions of linear wave equations developed by Dafermos-Rodnianski in \cite{newapp}. This new approach is a blend of an
integrated local energy inequality and a $p$-weighted energy
inequality in a neighborhood of the null infinity, also see
applications in \cite{Igorsurvey}, \cite{newapp3}, \cite{yang2}.

\bigskip

The aim of the present work is to extend the result in \cite{yang1}
to quasilinear wave equations on time dependent inhomogeneous
backgrounds with metric which is not merely a perturbation of the Minkowski
metric inside some cylinder but can be a perturbation of the
Minkowski metric on the whole spacetime. We show that if the metric
is merely $C^1$ close to the Minkowski metric inside some large cylinder with radius $R$ and approaches the Minkowski
metric outside with some weak rates and if the initial data are sufficiently small,
then the solutions of the quasilinear wave equations satisfying the
null condition are global in time. In particular, the metric does
not necessarily settle down to any particular stationary metric.

\bigskip

Before stating the main theorems, we now introduce some necessary notations.
We use the coordinate system $(t, x)=(t, x_1, x_2, x_3)$
of Minkowski space. We may also use the standard polar local coordinate system $(t, r,
\om)$ and the null coordinates $(u, v, \om)$
\[
 u=\frac{t-r}{2}, \quad v=\frac{t+r}{2}.
\]
Let $\nabb$ denote the induced covariant
 derivative, $\lap$ the induced Laplacian on the spheres of constant $r$, $\Om$ the angular
 momentum with components $\Om_{ij}=x_i\pa_j-x_j\pa_i$. Here $\pa_i$ is the partial derivative $\pa/\pa_{x_i}$.
We may use $\pa$ to abbreviate $(\pa_t,
\pa_{1}, \pa_{2}, \pa_{3}) =(\pa_t, \nabla)$. The vector
fields, used as commutators, are
\[
 Z=\{ \Om_{ij}, \pa_t\}.
\]
We use the convention that Greek indices run from 0 to 3 while the Latin indices run from
1 to 3.

Following the setup in \cite{igorStabvacuum}, we
now introduce a null frame $\{L, \Lb, S_1, S_2\}$, which is locally
a basis of the tangent space at any point $(t, x)$ of the Minkowski
space for $r>0$. We let
\[
L=\pa_v=\pa_t+\pa_r,\quad \Lb=\pa_u=\pa_t-\pa_r.
\]
We then let $S_1$, $S_2$ be an orthonormal basis of the spheres with
constant radius $r$. We use $\overline{\pa_v}$ to denote the ``good'' derivatives
\[
 \overline{\pa_v}=(L, S_1, S_2)=(\pa_v, \nabb).
\]
For any symmetric two tensor $k^{\mu\nu}$, relative to the null frame $\{L,\Lb, S_1, S_2\}$, we have
\[
 k^{\Lb\Lb}=\frac{1}{4}k^{\mu\nu}\Lb_{\mu}\Lb_\nu,\quad \Lb_0=1,\quad \Lb_i=-\frac{x_i}{r}.
\]
In our argument, we estimate the decay of the solution with respect
to the foliation $\Si_{\tau}$, defined as follows:
\begin{align*}
&S_\tau:=\{u=u_\tau, v\geq v_\tau\},\\
&\Si_\tau:=\{t=\tau, r\leq R\}\cup S_\tau,
\end{align*}
where $u_\tau=\frac{\tau-R}{2}$, $v_\tau=\frac{\tau+R}{2}$. The
radius $R$ is a to-be-fixed constant. The corresponding energy flux
is
$$ E[\phi](\tau):=\int_{r\leq R}|\pa\phi|^2dx + \int_{S_\tau}|L\phi|^2+|\nabb\phi|^2\quad r^2dvd\om.
$$

We now give the assumptions on the metric $g$. Relative to the coordinates $(t, x)$, assume
\[
 g^{\mu\nu}=m_0^{\mu\nu}+h^{\mu\nu}, \quad m_0 \textnormal{ the Minkowski metric.}
\]
We assume $h^{\mu\nu}$ are given smooth functions satisfying the following conditions\begin{equation}
\label{HHqu}
\begin{split}
&|h^{\mu\nu}|+|\pa h^{\mu\nu}|\leq \delta_0r_+^{-1-2\a},\quad r=|x|\leq R,\\
 &|\pa h^{\mu\nu}|+|h^{\mu\nu}|+|Z^k h^\mu\nu|\leq \delta_0 (r_+^{-\f12-2\a}\tau_+^{-\f12-\f12\a}+r_+^{-1-2\a}),\quad (t, x)\in S_\tau,\\
&|\overline{\pa_v}h^{\mu\nu}|+|\pa h^{\Lb\Lb}|+|Z ^k h^{\Lb\Lb}|\leq \delta_0 r_+^{-1-2\a},\quad (t, x)\in S_\tau, \quad |k|\leq 6
\end{split}
\end{equation}
for some small positive constant $\a<\frac{1}{10}$ and some large constant $R>4$ (will be the radius of the foliation $\Si_\tau$. Hence in the sequel the foliation
$\Si_\tau$ is fixed). $\delta_0$ will be a small constant depending only on $\a$. Here we denote
\[
 r_+=1+r,\quad \tau_+=1+\tau
\]
and $\tau$ is the parameter of the foliation $\Si_\tau$ which can be defined as $t-\max\{r-R, 0\}$ for all $t\geq r-R$.

Without loss of generality (see the Remark \ref{remarkgendata}), we assume that the initial data
 $(\phi_0, \phi_1)$ are smooth and are supported on $\{|x|\leq R\}$. The initial energy is defined to be
\begin{equation}
\label{defofE}
 E_{0}=\sum\limits_{|k|\leq 5}\int_{\mathbb{R}^{3}}|\nabla Z^{k+1}\phi_0|^2+|Z^k\phi_1|^2+|\nabla\phi_0|^2dx.
\end{equation}
We see that $E_0$ is uniquely determined by the initial data $(\phi_0, \phi_1)$
together with the equation \eqref{QUASIEQ}.

We have the following small
data global existence result for quasilinear wave equations.
\begin{thm}
 \label{maintheorem}
Consider the quasilinear wave equation \eqref{QUASIEQ} satisfy the null condition. Assume that the background metric
$g$ satisfy condition \eqref{HHqu} for small positive constants $\delta_0$, $\a$. Assume the initial data $(\phi_0, \phi_1)$ are smooth and
are support on $\{|x|\leq R\}$. Then there exist $\delta_m>0$, depending only on $\a$, and
$\ep_0>0$, depending on $\a$, $R$, $g$, such that for all $\delta_0<\delta_m$, $E_0<\ep_0$, there exists a unique global smooth solution
  $\phi$ of equation \eqref{QUASIEQ} with the following properties
\begin{itemize}
\item[(1)] Energy decay
$$E[Z^k\phi](\tau)\leq C E_0 (1+\tau)^{-1-\a},\quad |k|\leq 5.$$
\item[(2)] Pointwise decay:
 \begin{align*}
\sum\limits_{|k|\leq 3}|Z^k\phi|&\leq C\sqrt{E_0} (1+r)^{-\f12}(1+|t-r+R|)^{-\f12-\f12\a};\\
\sum\limits_{|k|\leq 2}|\Lb Z^k\phi|+\sum\limits_{|k|\leq 1}|\pa \Lb Z^k\phi|&\leq C_\ep \sqrt{E_0}
(1+r)^{-1+\ep}(1+|t-r+R|)^{-\f12-\frac{\a}{2}};\\
\sum\limits_{|k|\leq 2}|\overline{\pa_v}Z^k\phi|+\sum\limits_{|k|\leq 1}|\pa \overline{\pa_v} Z^k\phi|&\leq C_\ep \sqrt{E_0}
(1+r)^{-\frac{3}{2}+\ep},\quad \ep>0
\end{align*}
\end{itemize}
where the constant $C$ depends only on $R$, $\a$, $g$ and $C_\ep$ also depends on $\ep$.
\end{thm}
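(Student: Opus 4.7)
The plan is to prove Theorem~\ref{maintheorem} by a continuity argument: assume the energy and pointwise decay in $(1)$ and $(2)$ with a constant $2C$ on a maximal interval $[0,T^*)$ and improve both with $C$. Since local existence is standard, everything reduces to closing a global weighted energy estimate for $Z^k\phi$ with $|k|\le 5$, where $Z=\{\Om_{ij},\pa_t\}$. We deliberately avoid the scaling field $S$ and the conformal Killing field $K$, because the commutator error $t\pa_t g$ is not controlled under hypothesis \eqref{HHqu}. Instead we follow the Dafermos--Rodnianski scheme used in \cite{yang1}: a Morawetz-type integrated local energy decay (ILED) estimate inside the cylinder $\{r\le R\}$ coupled with the $r^p$-weighted energy hierarchy on the outgoing null portion $S_\tau$.

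First I would rewrite the equation in the divergence form associated with the \emph{effective} metric $\tilde g^{\mu\nu}=g^{\mu\nu}+g^{\mu\nu\ga}\pa_\ga\phi$, so that the principal part becomes $\tilde g^{\mu\nu}\pa_{\mu\nu}\phi$. The bootstrap pointwise bound on $\pa\phi$ ensures $\tilde g$ still obeys a condition of the form \eqref{HHqu} with a slightly larger constant, reducing the problem to an effectively linear one on a time-dependent inhomogeneous background. I then apply the multiplier $\pa_t$ (or, in the exterior, $r^p L$) to produce an energy identity with bulk error $\int \piX_{\mu\nu}T^{\mu\nu}[\phi]$, where $\piX$ is the deformation tensor of the multiplier relative to $\tilde g$; \eqref{HHqu} makes this error integrable. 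The null condition on $A^{\mu\nu}$ and $g^{\mu\nu\ga}$ is crucial: every quadratic nonlinearity can be arranged so that at least one factor is a good derivative $\overline{\pa_v}\phi=(L\phi,\nabb\phi)$, gaining an extra $r^{-1}$ that is then recovered from the $r^p$-weighted energy.

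Next I would commute the equation with $Z^k$ for $|k|\le 5$ and repeat the energy scheme. The commutator $[Z^k,\tilde g^{\mu\nu}\pa_{\mu\nu}]\phi$ produces terms involving $Z^{\le k}h$ and $Z^{\le k}\pa\phi$; assumption \eqref{HHqu} supplies exactly the decay in $r$ and $\tau$ needed to make these errors integrable. The ILED estimate inside $\{r\le R\}$ comes from a radial multiplier $f(r)\pa_r$ as in \cite{yang1}, where the smallness of $\delta_0$ absorbs the metric perturbation inside the cylinder. In the exterior $S_\tau$, applying the $r^p$-multiplier to $r\cdot Z^k\phi$ for $0<p\le 1+\a$ and iterating down the hierarchy converts the boundary energy $E[Z^k\phi](\tau)$ into the decay $(1+\tau)^{-1-\a}$ claimed in $(1)$. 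The improved rate $r^{-1-2\a}$ for $h^{\Lb\Lb}$ and $\overline{\pa_v}h$ in \eqref{HHqu} is exactly what makes this step possible without a loss in $\tau$.

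The pointwise estimates in $(2)$ follow from the energy decay via a Klainerman--Sobolev type inequality adapted to the foliation $\Si_\tau$, using the $Z^k$-energy bounds for angular and $\pa_t$-regularity and the $r^p$-hierarchy for the extra $r$-decay of $L\phi$ and $\nabb\phi$. The main obstacle I anticipate is top-order closure for the quasilinear term $g^{\mu\nu\ga}\pa_\ga\phi\,\pa_{\mu\nu}\phi$: after commuting with $Z^5$ the commutator generates a factor $\pa Z^5\phi\cdot\pa^2\phi$ that sits at the level of the energy itself. The resolution is twofold: first, the null structure of $g^{\mu\nu\ga}$ forces one of the two derivatives to be $\overline{\pa_v}Z^{\le 5}\phi$, which gains a power of $r$ from the $r^p$-hierarchy; second, the effective-metric viewpoint converts the quasilinear term into a perturbation of the linear equation of size $\delta_0+C\sqrt{E_0}$, so choosing $E_0$ small relative to $\delta_m-\delta_0$ closes the bootstrap.
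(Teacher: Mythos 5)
Your overall architecture matches the paper's: a bootstrap on $Z^k\phi$ with $Z=\{\Om_{ij},\pa_t\}$ only, an integrated local energy estimate from a radial multiplier $f\pa_r$ inside the cylinder, an $r^p$-weighted hierarchy near null infinity (with the multiplier modified to $r^p(-2\pa^{\Lb}+g^{\Lb\Lb}\Lb)$ because the metric is only asymptotically flat), the observation that $g^{\mu\nu\ga}\pa_\ga\phi$ can be absorbed into the metric perturbation $h^{\mu\nu}$, and the null structure to place a good derivative on one factor. However, there is a genuine gap at the heart of your scheme: you assert that "iterating down the hierarchy converts the boundary energy $E[Z^k\phi](\tau)$ into the decay $(1+\tau)^{-1-\a}$." On this background that step fails as stated. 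Because $h$ decays only like $r_+^{-1/2-2\a}\tau_+^{-1/2-\a/2}$ on $S_\tau$, both the integrated local energy estimate and the $p$-weighted inequalities pick up a boundary error $S^\ep[\phi](\tau_2)=\int_{S_{\tau_2}}(1+r)^{-1-\ep}|\pa\phi|^2 r^2\,dv\,d\om$ on the \emph{future} slice, with coefficient $\delta_0$ but with no decay in $\tau_2$ and no way to absorb it into the left-hand side without losing a derivative. Consequently one cannot run the standard Dafermos--Rodnianski iteration slice by slice to get decay of $E[\phi](\tau)$ for every $\tau$. The paper's resolution — which your proposal is missing — is a pigeonhole argument: first prove global boundedness of $I^\a[\phi]_0^\infty$ by working on regions bounded by $t$-level sets (estimate \eqref{boundILE}), then extract from this a set $T$ of good times on which $S^\ep[\phi](\tau)\les M\tau_+^{-1-\b}$ (the set \eqref{setT}), run the interpolation and hierarchy only between good times, and finally upgrade to all times using that $T$ meets every dyadic interval. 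The quantity that decays is the integrated energy $I^\ep[\phi]_{\tau_1}^{\tau_2}$, not a priori the flux itself; flux decay \eqref{energybd} is recovered only afterwards, once the bootstrap shows $T=[0,\infty)$.

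A second, smaller gap: your "Klainerman--Sobolev type inequality adapted to $\Si_\tau$" cannot deliver the $C^2$ control required to close the quasilinear bootstrap near $r=0$, because the angular momenta degenerate there and your commutator set contains no spatial translations. The paper handles the region $\{r\le 1\}$ separately, treating the equation at fixed time as a uniformly elliptic equation for $\phi$ and invoking $L^2$ elliptic regularity and Schauder estimates to convert $\pa_t$-regularity into full second-derivative bounds. Without some substitute for this step your pointwise decay in item (2) is only established for $r\gtrsim 1$.
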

We give several remarks
\begin{remark}
A similar result can be obtained in higher dimensions without null condition.
\end{remark}
\begin{remark}
 Inside the cylinder with radius $R$, the null condition on the nonlinearities is not necessary. The nonlinearities can be any quadratic terms of the solution
$\phi$ and its derivatives $\pa\phi$.
\end{remark}

\begin{remark}
As in \cite{yang1}, the smallness assumption ($\delta_0$ appeared in \eqref{HHqu}) on the metric $g$ inside the cylinder $\{(t, x)||x|\leq
R\}$ can be replaced by assuming two integrated local energy estimates. When $r\geq R$, the small constant $\delta_0$ in
the assumption \eqref{HHqu} can be removed as the
smallness can be obtained by choosing $R$ sufficiently large and
shrinking $\a$ to be $\f12\a$.
\end{remark}
\begin{remark}
For simplicity, we merely considered the scalar equations in this paper. However, minor modifications of our approach can also be applied to system of quasilinear wave equations
satisfying the null condition.
\end{remark}

\begin{remark}
\label{remarkgendata}
 It is not necessary to require that the initial data have compact support.
 The general assumption on the initial data can be that the following quantity
\[
 \sum\limits_{|k|\leq 6}\int_{\mathbb{R}^{3}}r^{1+\a}|\pa Z^k\phi(0, x)|^2dx
\]
is sufficiently small. In particular the constant $R$ in the
assumptions on the metric $g$ can be different from the radius of
the support of the initial data. A more general discussion on the initial data will appear in the author's forthcoming paper \cite{yang5}.
\end{remark}
\begin{remark}
 We remark here that the special case when the metric $g$ approaches the Minkowski metric in the spatial directions with a rate $(1+r)^{-1-\ep}$ has been discussed
in the recent work \cite{glsoChengbo}. But in that work there is an extra condition that the metric is static and is independent of time $t$.
\end{remark}

We now apply the above result to the problem of global stability of solutions to quasilinear wave equations initiated by S. Alinhac in \cite{alinhac-sls}. He
studied the quasilinear wave equations
\begin{equation}
\label{quasilinearstab}
\begin{cases}
\Box w+g^{\mu\nu\gamma}\pa_{\gamma}w\cdot \pa_{\mu\nu}w=0,\\
w(0,x)=\Phi_0(x)+\ep\phi_0, \quad \pa_t w(0,x)= \Phi_1(x)+\ep\phi_1
\end{cases}
\end{equation}
on Minkowski space, where $g^{\a\b\gamma}$ are constants satisfying
the null condition. Suppose $\Phi(t, x)$ is a smooth global solution of the above equation when $\ep=0$. He showed that if $\Phi$ satisfies the condition
\begin{equation}
\label{alinhaccond}
|g^{ij\gamma}\pa_{\gamma}\Phi\cdot\xi_i\xi_j|\leq \a_0
\sum\limits_{i=1}^{3}|\xi_i|^2,\quad \sum\limits_{|k|\leq
7}|\Gamma^k\pa \Phi|\leq C_0 (1+t)^{-1}(1+|r-t|)^{-\f12}
\end{equation}
for some positive constants $\a_0<1$ and $C_0$, then the solution of
the above quasilinear wave equation \eqref{quasilinearstab} exists
globally in time for all sufficiently small $\ep$. Here $\Gamma$
denotes the collection of vector fields given in line
\eqref{Lorenzinv} except the conformal killing vector field $K$.

We give weaker conditions than \eqref{alinhaccond} on the solution $\Phi$ to guarantee the global stability. We assume the initial data
$(\Phi_0, \Phi_1, \phi_0, \phi_1)$ are smooth and are supported on $\{r\leq R_0\}$ for some large constant $R_0$. Let $\Phi$ be a smooth solution of \eqref{quasilinearstab} when $\ep=0$.
Before some large time $t_0$, we assume the metric $m_0+g^{\mu\nu\ga}\pa_\ga \Phi$ is hyperbolic and
\begin{equation}
 \label{stacondw0}
|\pa^2\Phi|+ |Z^k \pa\Phi|\leq C_1,\quad t\leq t_0,\quad |k|\leq 6
\end{equation}
for some constant $C_1$. After time $t_0$, we assume $\Phi$ satisfies the following weak decay estimates
\begin{equation}
 \label{stabcondweak}
\begin{split}
&|\pa^2\Phi|+|Z^k\pa\Phi|\leq \delta_0 (r_+^{-\f12-2\a}\tau_+^{-\f12-2\a}+r_+^{-1-2\a}),\quad (t, x)\in S_\tau ,\quad t\geq t_0;\\
&|\pa\overline{\pa_v}\Phi|+|Z^k\overline{\pa_v}\Phi|\leq \delta_0r_+^{-1-2\a},\quad (t,x )\in S_\tau,\quad t\geq t_0;\\
&|\pa^2\Phi|+|\pa Z^k\phi|\leq \delta_0 (1+r)^{-1-2\a},\quad r\leq R,\quad t\geq t_0,\quad \forall k\leq 6,
\end{split}
\end{equation}
where $R=R_0+t_0$ is chosen to be radius of the foliation $\Si_\tau$. We let $E_0$ be the initial
energy for $(\ep\phi_0, \ep\phi_1)$ defined in \eqref{defofE}.

We have the following global stability of solutions to quasilinear wave equations.
\begin{thm}
 \label{stabquasi}
Assume the constants $g^{\mu\nu\ga}$ in \eqref{quasilinearstab}
satisfy the null condition. Assume the initial data $(\Phi_0,
\Phi_1, \phi_0, \phi_1)$ are smooth and are supported on $\{r\leq
R_0\}$ for some large constant $R_0$. Let $\Phi$ be a smooth solution
of \eqref{quasilinearstab} when $\ep=0$ satisfying the conditions \eqref{stacondw0},
\eqref{stabcondweak}. Then there exist two small positive constants $\delta_m>0$, depending on
$\a$, and $\ep_0>0$, depending on $\a$, $R_0$, $t_0$, $C_1$
such that for all $\delta_0<\delta_m$, $E_0<\ep_0$, there exists a
unique global smooth solution
  $w$ of equation \eqref{quasilinearstab} with the property that for the foliation $\Si_\tau$
with radius $R_0+t_0$, the difference $\phi=w-\Phi$ satisfies the same estimates as given in Theorem \ref{maintheorem} but with the constant $C$ depending
on $\a$, $R_0$, $t_0$, $C_1$.
\end{thm}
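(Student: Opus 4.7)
The plan is to reduce Theorem \ref{stabquasi} to Theorem \ref{maintheorem} via the substitution $\phi = w - \Phi$. Subtracting the two instances of \eqref{quasilinearstab} for $w$ and $\Phi$ and expanding the difference $g^{\mu\nu\ga}\pa_\ga w\cdot \pa_{\mu\nu}w - g^{\mu\nu\ga}\pa_\ga\Phi\cdot \pa_{\mu\nu}\Phi$ gives
\[
\Box_m\phi + g^{\mu\nu\ga}\pa_\ga\Phi\cdot \pa_{\mu\nu}\phi + g^{\mu\nu\ga}\pa_\ga\phi\cdot\pa_{\mu\nu}\phi = -g^{\mu\nu\ga}\pa_\ga\phi\cdot\pa_{\mu\nu}\Phi.
\]
Setting $h^{\mu\nu} := g^{\mu\nu\ga}\pa_\ga \Phi$ and $\tilde g^{\mu\nu} := m_0^{\mu\nu} + h^{\mu\nu}$, the principal part on the left matches $\Box_{\tilde g}\phi + g^{\mu\nu\ga}\pa_\ga\phi\cdot\pa_{\mu\nu}\phi$ modulo first-order terms in $\pa\phi$ with coefficients of size $|\pa h|\sim |\pa^2\Phi|$. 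For $t \ge t_0$ the assumption \eqref{stabcondweak} translates into exactly the structural bounds \eqref{HHqu} on $h$; in particular the null condition $g^{\a\b\ga}\Lb_\a\Lb_\b\Lb_\ga=0$ makes $h^{\Lb\Lb}$ depend only on the good derivatives $\overline{\pa_v}\Phi$, which is what delivers the stronger $\delta_0 r_+^{-1-2\a}$ bound on the bad component.

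The argument then proceeds in two stages. On $[0,t_0]$, the hypothesis \eqref{stacondw0} provides only finite $C^6$ control on $\Phi$ with no smallness, so $\tilde g$ need not be close to Minkowski; however $m_0 + g^{\mu\nu\ga}\pa\Phi$ is hyperbolic by assumption, and for $\ep$ sufficiently small classical local existence combined with energy estimates commuted with up to six vector fields from $Z$ produces a solution $w = \Phi + \phi$ on $[0,t_0]$ satisfying $\sum_{|k|\le 6}\|\pa Z^k\phi(t_0,\cdot)\|_{L^2}\le C(C_1,R_0,t_0)\,\ep$. Finite propagation speed confines the support of $\phi(t_0,\cdot)$ to $\{r\le R_0+t_0\}=\{r\le R\}$, matching the support hypothesis of Theorem \ref{maintheorem} after translating time by $t_0$.

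In the second stage one applies a time-translated version of Theorem \ref{maintheorem} to the equation displayed above, with background metric $\tilde g$ and initial data prescribed at $t = t_0$. One must verify that the extra linear source $-g^{\mu\nu\ga}\pa_\ga\phi\cdot\pa_{\mu\nu}\Phi$, together with the first-order corrections incurred in passing from $\tilde g^{\mu\nu}\pa_{\mu\nu}$ to $\Box_{\tilde g}$, does not disrupt that proof: their coefficients obey the same pointwise bounds as $\pa h$, so at every step where the proof of Theorem \ref{maintheorem} estimates error terms of schematic form $\pa h\cdot\pa\phi$ (in the integrated local energy inequality, the $p$-weighted energy inequality near null infinity, and the vector-field commutation identities) these new contributions are absorbed by identical manipulations. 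Shrinking $\ep_0 = \ep_0(\a,R_0,t_0,C_1)$ so that $C(C_1,R_0,t_0)\,\ep$ lies below the smallness threshold of Theorem \ref{maintheorem} applied to $\tilde g$ then transfers the claimed energy decay and pointwise estimates to $\phi$ on $[t_0,\infty)$, with $C$ depending on $\a,R_0,t_0,C_1$ as stated.

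The main obstacle is the patching at $t = t_0$: the smallness of $\ep$ must be measured against the potentially large Gronwall factor generated by the $C_1$-bound during the first stage, forcing $\ep_0$ to depend on $C_1,R_0,t_0$ as claimed. A secondary technical point is verifying that the linear-in-$\pa\phi$ error $g^{\mu\nu\ga}\pa_\ga\phi\cdot\pa_{\mu\nu}\Phi$ respects the null structure in the same way as the $\pa h\cdot\pa\phi$ errors handled in the proof of Theorem \ref{maintheorem}; this is precisely where the null hypothesis on $g^{\mu\nu\ga}$ and the improved decay of $\overline{\pa_v}\Phi$ in \eqref{stabcondweak} play their essential roles.
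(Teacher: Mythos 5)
Your proposal is correct and follows essentially the same route as the paper: reduce to the equation $\Box\phi+g^{\mu\nu\ga}\pa_\ga\phi\pa_{\mu\nu}\phi+g^{\mu\nu\ga}\pa_\ga\Phi\pa_{\mu\nu}\phi+g^{\mu\nu\ga}\pa_{\mu\nu}\Phi\pa_\ga\phi=0$, solve locally up to $t_0$ using \eqref{stacondw0}, and then from $t_0$ onward absorb $g^{\mu\nu\ga}\pa_\ga\Phi$ into the background metric $h^{\mu\nu}$ and the term $g^{\mu\nu\ga}\pa_{\mu\nu}\Phi\pa_\ga\phi$ into the linear term $N(\phi)$, verifying \eqref{HHqu}/\eqref{HH} via \eqref{stabcondweak} and the null structure (Lemma \ref{nullstr}) before invoking Theorem \ref{maintheorem}. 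Your write-up is in fact more detailed than the paper's own two-sentence sketch, and the points you flag (the Gronwall factor on $[0,t_0]$ forcing $\ep_0$ to depend on $C_1, R_0, t_0$, and the null structure of the linear source) are exactly the ones the paper relies on implicitly.
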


\begin{remark}
The problem of global stability of solutions to semilinear wave
equations has been discussed in \cite{yang2}.
\end{remark}
Compared to the condition \eqref{alinhaccond} imposed in
\cite{alinhac-sls}, we do not require the given solution $\Phi$ to
decay in time $t$ uniformly. In fact, we can even allow $\Phi$ to be
independent of $t$ in the cylinder $\{(t, x)|r\leq R\}$. Moreover,
since the initial data are supported on $\{r\leq R_0\}$, the finite
speed of propagation for wave equations shows that the solution
$\Phi$ vanishes when $r\geq t+R_0$. Hence condition
\eqref{alinhaccond} implies \eqref{stabcondweak}. Finally, the
collection of vector fields $Z$ used in the condition
\eqref{stabcondweak} is a subset of the collection $\Ga$ in
\eqref{alinhaccond}. In particular, we avoid the use of the scaling
vector field $S$ or the Lorentz rotations $L_i$ which grow in time $t$.

\bigskip

Our argument relies on a new method developed by Dafermos-Rodnianski in \cite{newapp}. Based on an integrated local energy
inequality, which is usually obtained by using the vector fields
$\pa_t$, $f\pa_r$, where $f$ is some appropriate function of $r$, as
multipliers, and a $p$-weighted energy inequality in a neighborhood of
the null infinity, the new approach leads to the decay, in
particular, of the energy flux $E[\phi](\tau)$ for solutions of
linear wave equations. The integrated local energy inequality has been
well studied on various backgrounds, including black hole spacetimes,
see e.g. \cite{newapp3} and references mentioned above. When the metric
is flat in a neighborhood of the null infinity, the $p$-weighted
energy inequality can be derived by multiplying the equation with
$r^p(\pa_t+\pa_r)(r\phi)$ and then integrating by parts. This is the
situation in \cite{yang1} as there the metric $g$ is identical to
the Minkowski metric when $r\geq R$. For the backgrounds considered
in this paper, the metric is merely asymptotically flat in the
spatial directions. As mentioned in the original work \cite{newapp}
of Dafermos-Rodnianski, a much more flexible and robust
way to derive the $p$-weighted energy inequality is to use the vector
fields $r^p(\pa_t+\pa_r)$ as multipliers. However, these vector
fields can not be applied directly to asymptotically flat backgrounds. We may need to modify the
vector fields as
\[
r^{p}(-2\pa^{\Lb}+ g^{\Lb\Lb}\Lb),
\]
see details in Section 3.2.

\bigskip

Nevertheless, for the general metrics $g$ in this paper, there is
another difficulty arising from the error terms on the boundary
$S_\tau$. Those error terms can not be controlled without losing any
derivatives. We hence are not able to show the decay of the energy
flux $E[\phi](\tau)$ as we did in \cite{yang1}. However, using the
boundedness of the integrated energy on the whole spacetime together
with a pigeon hole argument, we still can show the decay of the
integrated energy on the region bounded by $\Si_{\tau_1}$ and
$\Si_{\tau_{2}}$, see details in Section 4. The pointwise decay of
the solution then follows by commuting the equation with the vector
fields $\pa_t$, $\Om_{ij}$.

\bigskip

The plan of this paper is as follows: we will review the energy
method for wave equations and define the notations in Section 2. In
Section 3, we establish an integrated local energy inequality on the
region bounded by $\Si_{\tau_1}$ and $\Si_{\tau_{2}}$ and two
$p$-weighted energy
 inequalities. In Section 4, we show the decay of the integrated energy for solutions of linear wave
 equations. In the last section, we use bootstrap argument to prove
 the main theorems.

\textbf{Acknowledgements} The author is deeply indebted to his advisor
Igor Rodnianski for his continuous support on this problem. He
thanks Igor Rodnianski for sharing numerous valuable thoughts. The
author would also like to thank Beijing International Center for
Mathematical Research for the wonderful hospitality when he was
visiting there and part of this work was carried out there.

\section{Preliminaries and Energy Method }

 Given any Lorentzian metric
\[
g=g_{\mu\nu}dx^{\mu}dx^\nu,\quad x^0=t
\]
on $\mathbb{R}^{3+1}$, we let $g^{\mu\nu}$ denotes the components of
the inverse of the metric $g$. Throughout this paper, we let $A$,
$B$ be any vector fields in $\{L, \Lb, S_1, S_2\}$ and  $S$ be any
vector fields in $\{S_1, S_2\}$. Relative to the null frame, the
metric components are $g_{AB}$. The inverse is $g^{AB}$. We denote
\[
\pa^\mu=g^{\mu\nu}\pa_\nu,\quad \pa^A=g^{AB}B.
\]
At any fixed point $(t, x)$, we may choose $S_1$, $S_2$ such that
\begin{equation}
\label{com} [L, S]=-\frac{1}{r}S,\quad [\Lb, S]=\frac{1}{r}S,\quad
\left.[S_1, S_2]\right|_{(t, x)}=0,\quad S\in\{S_1, S_2\}
\end{equation}
This helps to compute those geometric quantities which are
independent of the choice of the local coordinates. Denote the incoming null hypersurface
\[
 \bar C(\tau_1, \tau_2, v):=\{(t, r, \om)| \tau_1\leq t-r+R\leq \tau_2, \quad t+r=2v\}.
\]
We simply use $\bar C(\tau_1, \tau_2)$ to denote the future null infinity (part of) where $v=\infty$. Define the energy flux through the null infinity as the
limit infimum of the energy flux through $\bar C(\tau_1, \tau_2, v)$ as $v\rightarrow \infty$, that is,
\[
E^N[\phi]_{\tau_1}^{\tau_2}:=\liminf\limits_{v\rightarrow\infty}\int_{\bar C(\tau_1, \tau_2, v)}(\pa_u\phi)^2+|\nabb\phi|^2\quad r^2dud\om.
\]
We define the modified energy flux
\[
 \tilde{E}[\phi](\tau):=E[\phi](\tau)+E^N[\phi]_{0}^{\tau}.
\]

\bigskip

We now review the energy method for wave equations. For a Lorentzian
space $(\mathbb{R}^{3+1}, g)$ with metric $g$, we denote $d\vol$ the
volume form. In the local coordinate system $(t, x)$, we have
\[
d\vol=\sqrt{-G}dxdt,\quad G=\det(g_{\mu\nu}).
\]
Here we have chosen $t$ to be the time orientation for the Lorentzian space $(\mathbb{R}^{3+1}, g)$.
We recall the energy-momentum tensor of the scalar field $\phi$ on the Lorentzian space $(\mathbb{R}^{3+1}, g)$ with metric $g$
\[
{\mathbb T}_{\mu\nu}[\phi]=\pa_\mu\phi\pa_\nu\phi-\frac12 g_{\mu\nu}\pa^{\gamma}\phi\pa_{\gamma}\phi.
\]
Throughout this paper, we raise and lower indices of any tensor relative to the given
metric $g$, e.g., $\pa^\ga=g^{\ga\mu}\pa_\mu$.
Given a vector field $X$, we define the currents
\[
J^X_\mu[\phi]= {\mathbb T}_{\mu\nu}[\phi]X^\nu, \qquad
K^X[\phi]= {\mathbb T}^{\mu\nu}[\phi]\pi^X_{\mu\nu},
\]
where $\pi^X_{\mu\nu}=\frac12 \mathcal{L}_Xg_{\mu\nu}$ is the deformation tensor of the vector field $X$. We denote
$J^X[\phi]$ as the vector field
\[
 J^X[\phi]=J_{\mu}^{X}[\phi]g^{\mu\nu}\pa_\nu.
\]
Recall that
\[
D^\mu J^X_\mu[\phi] = X(\phi)\Box_g\phi+K^X[\phi],
\]
where $\Box_g$ is the covariant wave operator and $D$ is the covariant derivative of the metric $g$.

Take any function $\chi$. We have the following identity
\begin{align*}
 D^{\mu}\left(-\f12\pa_{\mu}\chi\cdot \phi^2 +\f12 \chi\pa_{\mu}\phi^2\right)= \chi\phi\Box_g\phi+\chi\pa^{\gamma}\phi\pa_{\gamma}\phi -\f12\Box_g\chi\cdot\phi^2.
\end{align*}
Modify the vector field $J^X[\phi]$ to be
\begin{equation}
\label{mcurent} \tilde{J}^X[\phi]=\tilde{J}_{\mu}^X[\phi]g^{\mu\nu}\pa_\nu=\left(J_{\mu}^X[\phi] -
\f12\pa_{\mu}\chi \cdot\phi^2 + \f12 \chi\pa_{\mu}\phi^2\right)g^{\mu\nu}\pa_\nu.
\end{equation}
We then have the identity
\[
D^\mu \tilde{J}^X_\mu[\phi] = (X(\phi)+\chi\phi)\Box_g\phi+K^X[\phi]+\chi\pa^{\gamma}\phi\pa_{\gamma}\phi -\f12\Box_g\chi\cdot\phi^2.
\]
For any bounded region $\mathcal{D}$ in $\mathbb{R}^{3+1}$, using Stoke's formula, we have the following energy identity
\begin{align}
\notag \iint_{\mathcal{D}}D^\mu \tilde{J}^X_\mu[\phi]d\vol&=\iint_{\mathcal{D}}\Box_g\phi
(\chi\phi+X(\phi))
 + K^X[\phi] +\chi\pa^\ga\phi\pa_\ga\phi-\f12 \Box_g\chi \cdot \phi^2d\vol\\
&=\int_{\pa \mathcal{D}}i_{\tilde{J}^X[\phi]}d\vol,
\label{energyeq}
\end{align}
$\pa\mathcal{D}$ denotes the boundary of the domain $\mathcal{D}$ and $i_Y d\vol$ denotes the contraction of the volume form $d\vol$
with the vector field $Y$ which gives the surface measure of the
boundary. For example, for any basis $\{e_1, e_2,
\ldots, e_n\}$, we have $i_{e_1}( de_1\wedge de_2\wedge\ldots
de_k)=de_2\wedge de_3\wedge\ldots\wedge de_k$. Here we have chosen $t$ to be the time orientation. For more details on this formula, we refer to the appendix of \cite{stefanos}.

Throughout this paper, the domain $\mathcal{D}$ will be regular regions bounded by the $t$-constant slices, the outgoing null hypersurfaces $S_\tau$ or the incoming null
hypersurfaces $\bar C(\tau_1, \tau_2, v)$. We now compute $i_{\tilde{J}^{X}[\phi]}d\vol$ on these three kinds of hypersurfaces.
We now compute $i_{\tilde{J}^{X}[\phi]}d\vol$ on $\Si_\tau$ or on the $v$-constant incoming null hypersurfaces (with respect to the Minkowski metric).
We have the following three cases.

On
$\Si_\tau\cap \{r\leq R\}$, the surface measure is a function times $dx$. Recall the volume form
\[
d\vol=\sqrt{-G}dxdt=-\sqrt{-G}dtdx.
\]
Here note that $dx$ is a $3$-form. We thus can show that
\begin{equation}
\label{curlessR}
\begin{split}
 i_{\tilde{J}^{X}[\phi]}d\vol&=i_{(\tilde{J}^{X}[\phi])^\mu\pa_\mu}d\vol=-(\tilde{J}^{X}[\phi])^0\sqrt{-G}dx\\
 &=-(\pa^t\phi
X(\phi)-\f12 X^0\pa^\ga\phi\pa_\ga\phi-\f12 \pa^t\chi \cdot
\phi^2+\chi\pa^t\phi\cdot \phi)\sqrt{-G}dx.
\end{split}
\end{equation}
On the null hypersurface $S_\tau$ with respect to the Minkowski metric, we can write the volume form
\[
d\vol=\sqrt{-G}dxdt=\sqrt{-G}r^2drdt d\om=2\sqrt{-G}r^2dvdud\om=-2\sqrt{-G}dudvd\om.
\]
Here $u=\frac{t-r}{2}$, $v=\frac{t+r}{2}$ are the null coordinates.
Notice that $\Lb=\pa_u$. We can compute
\begin{equation}
\label{curStau}
i_{\tilde{J}^{X}[\phi]}d\vol=-2\sqrt{-G}r^2(\pa^{\Lb}\phi
X(\phi)-\f12 X^{\Lb}\pa^\ga\phi\pa_\ga\phi-\f12
\pa^{\Lb}\chi\phi^2+\chi \pa^{\Lb}\phi\cdot \phi)dvd\om.
\end{equation}
Similarly, on the $v$-constant incoming null hypersurfaces $\{v=constant\}$, we
have
\begin{equation}
\label{curnullinfy} i_{\tilde{J}^{X}[\phi]}d\vol=2\sqrt{-G}r^2(\pa^L\phi X(\phi)-\f12 X^L\pa^\ga\phi\pa_\ga\phi-\f12
\pa^L\chi \phi^2+\chi \pa^L\phi\cdot\phi)dud\om.
\end{equation}
We remark here that the above three formulae hold for any vector field $X$ and any function $\chi$.

\bigskip

The following several lemmas, which have been proven in
\cite{yang1}, will be used later on.
\begin{lem}
\label{lem1} Let $\phi$ be a smooth function on $\mathbb{R}^{3+1}$. Assume
\[
\lim\limits_{v\rightarrow \infty}\phi(u_0, v, \om)=0,\quad u_0=-\f12
R.
\]
Then in the polar coordinates $(t, r, \om)$, we have
$$r\int_{\om}|\phi(r+\tau-R, r, \om)|^2 d\om\leq 4\tilde{E}[\phi](\tau).$$
Moreover, if $E^N[\phi]_0^\tau<\infty$, then
\[
r\int_{\om}|\phi(r+\tau-R, r, \om)|^2 d\om\leq 4E[\phi](\tau).
\]
\end{lem}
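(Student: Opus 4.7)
The plan is to apply the fundamental theorem of calculus on the outgoing null hypersurface $S_\tau=\{u=u_\tau\}$, on which the point in question lies (with $v=r+u_\tau$). The target is a Hardy-type estimate: the pointwise value of $\phi$ on a sphere of radius $r$ along $S_\tau$ should be controlled by the $L\phi$-flux on the portion of $S_\tau$ exterior to that sphere. The one technical hitch is that the vanishing of $\phi$ at null infinity is assumed only on the companion cone $\{u=u_0\}$, not on $S_\tau$ itself, so the first task is to transfer this vanishing property across the two cones.

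\medskip

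To carry out the transfer, fix a large $V$ and write
\[
\phi(u_\tau,V,\om)=\phi(u_0,V,\om)+\int_{u_0}^{u_\tau}\Lb\phi(u,V,\om)\,du,
\]
then apply Cauchy--Schwarz in the $u$-direction with the weight $r^2=(V-u)^2$. The first term on the right tends to zero as $V\to\infty$ by hypothesis; the second is bounded by $\bigl(\int_{u_0}^{u_\tau}(V-u)^{-2}du\bigr)^{1/2}$ times a factor whose square, after averaging over $\om$ and taking $\liminf_{V\to\infty}$, is controlled by $E^N[\phi]_0^\tau$ via the very definition of the null-infinity flux. Hence the limit $\phi(u_\tau,\infty,\om)$ is controlled by $E^N[\phi]_0^\tau$, which is precisely why the general bound must involve $\tilde E=E+E^N$ on the right-hand side.

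\medskip

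With $\phi(u_\tau,\infty,\om)$ under control, the FTC along $S_\tau$ yields
\[
\phi(u_\tau,v,\om)=\phi(u_\tau,\infty,\om)-\int_v^\infty L\phi(u_\tau,v',\om)\,dv',
\]
and Cauchy--Schwarz with the weight $(v'-u_\tau)^{-2}$ gives
\[
\Bigl|\int_v^\infty L\phi\,dv'\Bigr|^2\le\frac{1}{r(u_\tau,v)}\int_v^\infty r^2(L\phi)^2\,dv'.
\]
Multiplying by $r$ and integrating over $\om$ converts the right-hand side into the $(L\phi)^2 r^2$ portion of $E[\phi](\tau)$. Combining the two contributions via $(a+b)^2\le 2a^2+2b^2$ yields the factor $4$ and the bound $4\tilde E[\phi](\tau)$. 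If one assumes additionally that $E^N[\phi]_0^\tau<\infty$, then the transfer step of the previous paragraph produces $\phi(u_\tau,\infty,\om)=0$ pointwise in $\om$, the boundary correction drops out, and the sharper bound $4E[\phi](\tau)$ results.

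\medskip

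The main obstacle is the transfer step: propagating the vanishing-at-infinity property from $\{u=u_0\}$ to $\{u=u_\tau\}$ using only the $\liminf$-type definition of $E^N$ requires the $u$-direction Cauchy--Schwarz to be controlled uniformly in $V$ after averaging over $\om$. Once this is in place, the remainder is a standard Hardy-in-$v$ estimate on a single outgoing null cone.
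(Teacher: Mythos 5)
The paper does not actually prove this lemma: it is imported from the author's earlier work \cite{yang1}. Your argument is nonetheless the standard one for statements of this type and is essentially sound: a fundamental-theorem-of-calculus/Hardy estimate in $v$ along $S_\tau$ against the flux $\int_{S_\tau}r^2|L\phi|^2dvd\om\leq E[\phi](\tau)$, preceded by a transfer of the decay hypothesis from $\{u=u_0\}$ to $\{u=u_\tau\}$ across the incoming cones $\bar C(0,\tau,V)$, which is exactly what the flux $E^N[\phi]_0^\tau$ is designed to control.

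One step of your write-up is logically loose, although the conclusion survives. You say the limit $\phi(u_\tau,\infty,\om)$ is merely ``controlled by'' $E^N[\phi]_0^\tau$ and that this is why the first bound carries $\tilde E=E+E^N$, proposing to add a possibly nonzero boundary value to the $v$-integral via $(a+b)^2\le 2a^2+2b^2$. Taken literally this cannot work: if the boundary value did not vanish, the quantity $r\int_\om|\phi(u_\tau,\infty,\om)|^2d\om$ would grow linearly in $r$ and could not be dominated by any fixed multiple of $E^N[\phi]_0^\tau$. What your own transfer estimate actually shows --- because of the prefactor $\bigl(\int_{u_0}^{u_\tau}(V-u)^{-2}du\bigr)^{1/2}\les V^{-1}$ --- is a dichotomy: either $E^N[\phi]_0^\tau=\infty$, in which case $\tilde E[\phi](\tau)=\infty$ and the first inequality is vacuous, or $E^N[\phi]_0^\tau<\infty$, in which case, along the subsequence $V_n\to\infty$ realizing the $\liminf$ in the definition of $E^N$, one gets $\int_\om|\phi(u_\tau,V_n,\om)|^2d\om\to0$, the boundary term drops out entirely, and the $v$-direction Cauchy--Schwarz alone yields $r\int_\om\phi^2d\om\le E[\phi](\tau)\le 4\tilde E[\phi](\tau)$. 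So the proof should proceed by this case split rather than by the $(a+b)^2$ bookkeeping you describe. Two smaller points: the hypothesis gives decay of $\phi(u_0,v,\om)$ for each fixed $\om$, whereas your argument needs $\int_\om\phi(u_0,V,\om)^2d\om\to0$, so some uniformity in $\om$ must be assumed or extracted; and the estimate as you run it applies to points with $r\ge R$, i.e.\ on $S_\tau$, which is where the lemma is actually invoked.
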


For solutions of linear wave equations, the good derivative $\pa_v$ of the solution decays better. In that case, we have
\begin{lem}
\label{lempphi2} Let $\a_1, \a_2>0$. Assume $\phi$ satisfies the condition in Lemma \ref{lem1}.
Then we have
\begin{equation*}
\int_{S_\tau}r^{1-\a_1}\phi^2dvd\om\leq C_0 R^{1-\a_1} \tilde{E}[\phi](\tau)+
C_0\int_{S_\tau}r^{1+\a_2} |\pa_v(r\phi)|^2dvd\om ,
\end{equation*}
where $C_0$ is a constant depending only on $\a_1$, $\a_2$.
\end{lem}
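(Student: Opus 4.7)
The plan is to convert the weighted $L^2$ bound on $\phi$ into a Hardy-type inequality by rewriting the integrand in terms of $\psi = r\phi$, then integrating by parts in $v$ along $S_\tau$, with the boundary contribution at $r = R$ controlled by Lemma \ref{lem1}.

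First I would set $\psi = r\phi$, so that $r^{1-\a_1}\phi^2 = r^{-1-\a_1}\psi^2$. On $S_\tau$ the coordinate $u$ is fixed at $u_\tau$, and $r = v - u_\tau$, so $dr = dv$ and $\pa_v \psi = \pa_v(r\phi)$. The key algebraic identity is
\[
r^{-1-\a_1} = -\frac{1}{\a_1}\,\pa_v\!\left(r^{-\a_1}\right),
\]
which lets me integrate by parts in $v$ from $v_\tau$ to $\infty$. This produces a boundary term at $v = v_\tau$ (where $r = R$), a boundary term at $v = \infty$, and a bulk term proportional to $\int r^{-\a_1}\psi\,\pa_v\psi\,dv$.

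Next I would handle each term. The boundary term at infinity has the right sign (it appears as $-\frac{1}{\a_1}\lim_{v\to\infty} r^{-\a_1}\psi^2$), so for the purpose of an upper bound I can simply discard it; this sidesteps having to prove vanishing at infinity and is the only mildly delicate point in the argument. The boundary term at $v_\tau$ equals $\frac{1}{\a_1} R^{-\a_1}\psi^2|_{v=v_\tau} = \frac{1}{\a_1} R^{2-\a_1}\phi^2(R,\om)$; integrating over $\omega$ and invoking Lemma \ref{lem1} gives $R\int_\om |\phi(R,\om)|^2\,d\om \leq 4\tilde{E}[\phi](\tau)$, producing the desired $R^{1-\a_1}\tilde{E}[\phi](\tau)$ factor. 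For the bulk term I apply Cauchy--Schwarz with a small parameter $\varepsilon$:
\[
\frac{2}{\a_1}\int r^{-\a_1}\psi\,\pa_v\psi\,dv \;\leq\; \varepsilon\int r^{-1-\a_1}\psi^2\,dv + \frac{1}{\varepsilon\a_1^2}\int r^{1-\a_1}|\pa_v\psi|^2\,dv,
\]
choose $\varepsilon = 1/2$ to absorb the first term back into the left-hand side.

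Finally, since $\a_1,\a_2>0$ and $r \geq R > 1$ on $S_\tau$, we have $r^{1-\a_1}\leq r^{1+\a_2}$, so the remaining integral is bounded by $\int_{S_\tau} r^{1+\a_2}|\pa_v(r\phi)|^2\,dv\,d\om$. Integrating in $\om$ and collecting constants yields the claimed inequality with $C_0$ depending only on $\a_1$ and $\a_2$. The main (mild) obstacle is the $v \to \infty$ boundary term from integration by parts; the escape is that for an upper bound its sign is favourable, so no decay hypothesis beyond what is already present is needed.
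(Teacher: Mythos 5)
Your proof is correct, but it takes a genuinely different route from the paper's. The paper does not integrate by parts: it writes $\psi(v)=\psi(v_\tau)+\int_{v_\tau}^v\pa_v\psi\,dv$, squares, and applies Cauchy--Schwarz with the weight split $|\pa_v\psi|=r^{-(1+\a_2)/2}\cdot r^{(1+\a_2)/2}|\pa_v\psi|$ so that $\int_{v_\tau}^\infty r^{-1-\a_2}dv$ converges; this yields a pointwise-in-$v$ bound on the sphere average of $\psi^2$, which is then integrated against $r^{-1-\a_1}$. This is where the second parameter $\a_2$ is genuinely used. Your Hardy-type argument, writing $r^{-1-\a_1}=-\a_1^{-1}\pa_v(r^{-\a_1})$, integrating by parts, and absorbing the cross term, never needs the $\a_2$ weight: it produces $\int_{S_\tau}r^{1-\a_1}|\pa_v\psi|^2$ on the right, which is \emph{stronger} than the stated conclusion (you then relax it to $r^{1+\a_2}$ using $r\geq R>1$). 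Both proofs lean on Lemma \ref{lem1} in the same way to handle the $r=R$ boundary sphere, and both give a constant depending only on $\a_1,\a_2$.

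One small point of rigor: the absorption step is only legitimate if the quantity being absorbed is finite, and a priori $\int_{v_\tau}^\infty r^{-1-\a_1}\psi^2\,dv$ could be infinite. The standard fix is to perform the integration by parts on a truncated interval $[v_\tau,V]$, where all integrals are finite since $\phi$ is smooth, absorb there, obtain a bound uniform in $V$, and let $V\to\infty$ by monotone convergence; this also makes your "discard the boundary term at infinity by sign" step literal rather than formal. With that routine modification the argument is complete.
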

\begin{proof} Let $\psi=r\phi$. By Lemma \ref{lem1}, we have
\begin{equation*}
\label{phi2bd}
\begin{split}
\int_{\om}|\psi|^2(\tau,v,\om)d\om&\leq C_0\int_{\om}|\psi|^2(\tau, v_\tau, \om)d\om +C_0\left(\int_{v_\tau}^v\int_{\om}|\pa_v\psi|d\om dv\right)^2\\
 &\leq C_0 R \tilde{E}[\phi](\tau) +  C_0\int_{v_\tau}^v\int_{\om}r^{1+\a_2}|\pa_v\psi|^2d\om dv\int_{v_\tau}^v r^{-1-\a_2}dv\\
&\leq C_0 R \tilde{E}[\phi](\tau)+ C_0\frac{R^{-\a_2}-r^{-\a_2}}{\a_2}\int_{S_\tau}r^{1+\a_2}|\pa_v\psi|^2d\om dv
\end{split}
\end{equation*}
Multiply the above inequality by $r^{-1-\a_1}$ and then integrate
from $v_\tau=\frac{\tau+R}{2}$ to infinity. We obtain
\begin{align*}
\int_{S_\tau}r^{1-\a_1}\phi^2dvd\om
&=\int_{v_\tau}^{\infty}r^{-1-\a_1}\int_{\om}|\psi|^2d\om dv \\
&\leq C_0 R^{1-\a_1} \tilde{E}[\phi](\tau)+ C_0
\frac{R^{-\a_1-\a_2}}{\a_1(\a_1+\a_2)}\int_{S_\tau}r^{1+\a_2}|\pa_v\psi|^2d\om dv\\
&\leq C_0 R^{1-\a_1} \tilde{E}[\phi](\tau)+ C_0 \int_{S_\tau}r^{1+\a_2}|\pa_v\psi|^2d\om dv.
\end{align*}
\end{proof}
We will also frequently use the following simple lemma.
\begin{lem}
 \label{lweightILE}
Suppose $f(\tau)$ is smooth. Then we have the identity
\begin{equation*}
\int_{\tau_1}^{\tau_2}(1+s)^\b f(s)ds=\b\int_{\tau_1}^{\tau_2}(1+\tau)^{\b-1}\int_{\tau}^{\tau_2}f(s)ds
d\tau+(1+\tau_1)^{\b}\int_{\tau_1}^{\tau_2}f(s)ds
\end{equation*}
for $\forall \b\in\mathbb{R}$.
\end{lem}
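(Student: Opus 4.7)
The approach is a direct integration-by-parts (equivalently, a Fubini swap), and the whole argument is essentially one calculation. Set $F(s) := \int_s^{\tau_2} f(\sigma)\,d\sigma$, so that $F'(s) = -f(s)$ and $F(\tau_2) = 0$. Rewriting the left-hand side as $-\int_{\tau_1}^{\tau_2}(1+s)^{\beta} F'(s)\,ds$ and integrating by parts in $s$ produces the boundary contribution $(1+\tau_1)^{\beta} F(\tau_1) = (1+\tau_1)^{\beta} \int_{\tau_1}^{\tau_2} f(s)\,ds$ (the endpoint at $s = \tau_2$ contributes nothing because $F(\tau_2)=0$), together with the interior term $\beta\int_{\tau_1}^{\tau_2}(1+s)^{\beta-1}F(s)\,ds$, which is precisely the iterated integral on the right-hand side by the definition of $F$. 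This yields the identity for every real $\beta$; no case distinction is needed, since $(1+s)^{\beta}$ is smooth on $[\tau_1,\tau_2]$ and the $\beta = 0$ case only trivializes both sides to $\int_{\tau_1}^{\tau_2} f\,ds$.

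Equivalently, one can start from the right-hand side and apply Fubini on the triangle $\{(\tau,s) : \tau_1 \le \tau \le s \le \tau_2\}$, evaluating the inner integral $\int_{\tau_1}^{s} (1+\tau)^{\beta-1}\,d\tau = \beta^{-1}\bigl[(1+s)^{\beta} - (1+\tau_1)^{\beta}\bigr]$ (for $\beta\ne 0$) and combining; the $\beta = 0$ case is handled by inspection. I do not expect any genuine obstacle here, since this is a routine calculus identity. Its role in the paper appears to be a convenient device for converting $(1+s)^{\beta}$-weighted integrals of $f$ into $(1+\tau)^{\beta-1}$-weighted integrals of the tail $\int_\tau^{\tau_2} f\,ds$, which is presumably how it will be applied to combine the integrated local energy inequality with the $p$-weighted $r^p$ hierarchy in Section~4.
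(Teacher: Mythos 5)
Your proof is correct and is exactly the paper's argument: define $F(\tau)=\int_{\tau}^{\tau_2}f(s)\,ds$ and integrate by parts, with the boundary term at $s=\tau_2$ vanishing since $F(\tau_2)=0$. The Fubini variant you mention is an equivalent reformulation and adds nothing beyond the paper's one-line proof.
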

\begin{proof}
 Let
\[
 F(\tau)=\int_{\tau}^{\tau_2}f(s)ds.
\]
Integration by parts gives the lemma.
\end{proof}

We also need the following analogue of Hardy's inequality to control $\phi$ by the energy.
\begin{lem}
\label{lem2} Let $\phi$ satisfy the same conditions as in the
previous lemma. Then
\begin{equation*}
\int_{\{r\leq
R\}\cap\Si_\tau}\left(\frac{\phi}{1+r}\right)^2dx+\int_{S_{\tau}}\left(\frac{\phi}{1+r}\right)^2r^2dvd\om
\leq 12 \tilde{E}[\phi](\tau).
\end{equation*}
In particular
\begin{equation*}
\int_{r\leq R}\phi^2dx\leq 12(1+R)^2 \tilde{E}[\phi](\tau).
\end{equation*}
Here we simply use $r\leq R$ to denote the integral region $\{r\leq
R\}\cap \Si_\tau$.
\end{lem}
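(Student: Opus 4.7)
The plan is to reduce both terms on the left-hand side to a single one-dimensional integral in $r$, then establish the bound by a classical Hardy-type integration by parts, using Lemma \ref{lem1} to absorb the boundary contribution at $r=\infty$. First, on the interior piece $\{r\le R\}\cap\Si_\tau$ the measure $dx$ factors as $r^2\,dr\,d\om$. On the outgoing null piece $S_\tau$ the coordinate $u=u_\tau$ is constant, so $r=v-u_\tau$ gives $dr=dv$, and the measure $r^2\,dv\,d\om$ is again $r^2\,dr\,d\om$. Since $\phi$ is smooth across $r=R$, I can combine the two to rewrite the left-hand side as
\[
\int_{\om}\int_0^\infty \phi^2\,\frac{r^2}{(1+r)^2}\,dr\,d\om.
\]

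Next I would compute $\tfrac{d}{dr}\bigl(r^2/(1+r)\bigr)=r(r+2)/(1+r)^2\ge r^2/(1+r)^2$, and integrate the identity
\[
\frac{d}{dr}\!\left(\phi^2\,\frac{r^2}{1+r}\right)=\phi^2\,\frac{r(r+2)}{(1+r)^2}+2\phi\,\pa_r\phi\,\frac{r^2}{1+r}
\]
over $[0,R']$ for an arbitrary $R'>R$. The boundary term at $r=0$ vanishes by smoothness; the boundary term at $r=R'$ is controlled, after integration over $\om$, by Lemma \ref{lem1}:
\[
\int_{\om}\phi(R')^2\,\frac{R'^2}{1+R'}\,d\om\le \frac{R'}{1+R'}\cdot 4\tilde E[\phi](\tau)\le 4\tilde E[\phi](\tau).
\]

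Writing $A_{R'}=\int_\om\int_0^{R'}\phi^2\,\tfrac{r(r+2)}{(1+r)^2}\,dr\,d\om$ and applying Cauchy--Schwarz to the cross term yields
\[
A_{R'}\le 4\tilde E[\phi](\tau)+2\sqrt{A_{R'}}\left(\int_{\om}\int_0^{R'}(\pa_r\phi)^2\,r^2\,dr\,d\om\right)^{1/2}.
\]
The derivative integral is controlled piece by piece: on $\{r\le R\}\cap\Si_\tau$ one has $(\pa_r\phi)^2\le|\pa\phi|^2$, while on $S_\tau$, since $u$ is constant, $\pa_r\phi=L\phi$. These two disjoint pieces are exactly the two terms making up $E[\phi](\tau)$ (with $|\nabb\phi|^2$ dropped), so their sum is at most $E[\phi](\tau)\le\tilde E[\phi](\tau)$. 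The resulting quadratic inequality in $\sqrt{A_{R'}}$ yields $A_{R'}\le 12\,\tilde E[\phi](\tau)$ uniformly in $R'$, and the bound $r^2\le r(r+2)$ then gives the first asserted inequality upon letting $R'\to\infty$.

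The second statement is then immediate: for $r\le R$ one has $\phi^2\le(1+R)^2\bigl(\phi/(1+r)\bigr)^2$, and integrating over $\{r\le R\}\cap\Si_\tau$ and applying the first bound gives $\int_{r\le R}\phi^2\,dx\le 12(1+R)^2\tilde E[\phi](\tau)$. I expect the only delicate step to be the boundary term at $r=\infty$, where $\phi$ itself is not assumed to vanish along $S_\tau$; however, the $r$-weighted pointwise trace control provided by Lemma \ref{lem1} is exactly what is needed to absorb this term into the final constant.
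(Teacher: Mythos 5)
Your proof is correct: the reduction of both pieces of $\Si_\tau$ to a single radial integral, the Hardy-type integration by parts with weight $r^2/(1+r)$, the use of Lemma \ref{lem1} to absorb the boundary term at $r=R'$, and the Cauchy--Schwarz step (splitting $\tfrac{r^2}{1+r}$ as $\bigl(\tfrac{r(r+2)}{(1+r)^2}\bigr)^{1/2}\bigl(\tfrac{r^3}{r+2}\bigr)^{1/2}$ so the derivative factor is bounded by $r^2$) all check out, and the resulting quadratic inequality gives $A_{R'}\le(6+2\sqrt5)\tilde E[\phi](\tau)<12\,\tilde E[\phi](\tau)$. The paper itself omits the proof, citing \cite{yang1}, but this is exactly the standard argument one expects there, so no further comment is needed.
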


\begin{remark}
\label{remark1}
 By Lemma \ref{lem1}, if $E^N[\phi]_0^\tau$ is finite, then all the above statements hold if we replace $\tilde{E}[\phi](\tau)$ with $E[\phi](\tau)$.
\end{remark}

Finally, for $\forall \a,\quad  p\geq0,\quad \b\in \mathbb{R}$, we define several notations:
\begin{align*}
 &I^\a[\phi]_{\tau_1}^{\tau_2}:=\int_{\tau_1}^{\tau_2}\int_{\Si_\tau}\frac{|\bar\pa\phi|^2}{(1+r)^{1+\a}}dxd\tau,
\quad S^\a[\phi](\tau):=\int_{S_\tau}\frac{|\pa\phi|^2}{(1+r)^{1+\a}}r^2 dvd\om,\\
& D^\a[F]_{\tau_1}^{\tau_2}:=\int_{\tau_1}^{\tau_2}\int_{\Si_\tau}(1+r)^{1+\a}|F|^2 dxd\tau, \quad g^p[\phi](\tau):=\int_{S_\tau}r^p|\pa_v\psi|^2dvd\om,\\
&G^{p, \b}[\phi]_{\tau_2}^{\tau_2}:=\int_{\tau_1}^{\tau_2}\tau_+^{-\b}\int_{S_\tau}r^p|\pa_v\psi|^2dvd\om d\tau,\quad
E^\b[\phi]_{\tau_1}^{\tau_2}:=\int_{\tau_1}^{\tau_2}\tau_+^{-\b}E[\phi](\tau)d\tau\\
& \bar G^{p, \b}[\phi]_{\tau_2}^{\tau_2}:=\int_{\tau_1}^{\tau_2}\tau_+^{-\b}\int_{S_\tau}r^p|\overline{\pa_v}\psi|^2dvd\om d\tau, \quad \bar g^{p}[\phi](\tau):=\int_{S_\tau}r^p|\overline{\pa_v}\psi|^2dvd\om,
\end{align*}
where $\psi=r\phi$. Similarly, we have the notation for $\tilde{E}^\b[\phi]_{\tau_1}^{\tau_2}$. We remark here that this notation is different from $E^N[\phi]_{\tau_1}^{\tau_2}$ which
is the energy flux through the null infinity.

\section{Weighted Energy Estimates}
Our approach relies on two estimates: integrated local energy
inequality and $p$-weighted energy inequality. In this section, we use
the multiplier method to establish an integrated energy
inequality and two $p$-weighted energy inequalities for quasilinear
wave equations. The integrated energy inequality was first
proven by C. Morawetz in \cite{mora2}. We follow the method developed in
\cite{dr3} to obtain the integrated energy inequality here. In
\cite{newapp}, Dafermos-Rodnianski introduced the
$p$-weighted energy inequalities in a neighborhood of null infinity.
These estimates have been established in \cite{yang2} for semilinear
wave equation. As mentioned in the original work of Dafermos-Rodnianski,
we can use the robust multiplier method to show the $p$-weighted energy inequality on general backgrounds.

\bigskip

In this section, we prove a general integrated energy inequality for solutions of the linear wave equations
\begin{equation}
 \label{LWAVEEQ}
 \Box_g \phi+N(\phi)=F
\end{equation}
on the Lorentzian manifold $(\mathbb{R}^{3+1}, g)$. Here $N(\phi)=N^\mu\pa_\mu\phi$ is a linear term and $N$ is a vector field on $\mathbb{R}^{3+1}$ with components $N^\mu$.

Fix a large constant $R>8$ so that we can determine the foliation $\Si_\tau$ with radius $R$. Recall that $g=h+m_0$. We assume $h^{\mu\nu}$, $N^\mu$ satisfy the following weak decay estimates
\begin{equation}
\label{HH}
\begin{split}
&|h^{\mu\nu}|+|\pa h^{\mu\nu}|+|N^\mu|\leq \delta_0r_+^{-1-2\a},\quad r=|x|\leq R,\\
 &|\pa h^{\mu\nu}|+|h^{\mu\nu}|+|N^\mu|\leq \delta_0 (r_+^{-\f12-2\a}\tau_+^{-\f12-\f12\a}+r_+^{-1-2\a}),\quad (t, x)\in S_\tau,\\
&|\overline{\pa_v}h^{\mu\nu}|+|\pa h^{\Lb\Lb}|+|h^{\Lb\Lb}|+|N^{\Lb}|\leq \delta_0 r_+^{-1-2\a},\quad (t, x)\in S_\tau,
\end{split}
\end{equation}
where $\delta_0$, $\a$ are positive constants and $\a<\frac{1}{10}$. Here we recall that $r_+=1+r$, $\tau_+=1+\tau$; $h^{\Lb\Lb}$ is the component of $h$ with
respect to the null frame $\{L, \Lb, S_1, S_2\}$.

\subsection{The integrated local energy estimates}

We establish the following key estimates for solutions of linear wave
equations.
\begin{prop}
\label{ILEthm} Assume that the given metric $g$ satisfy the above estimates \eqref{HH} for some positive constant $\a$, $\delta_0$.
Let $\phi$ be a smooth solution of the linear wave equation \eqref{LWAVEEQ} and satisfy the conditions in Lemma \ref{lem1}.
If $\delta_0$ is sufficiently small depending only on $\a$ then for $\forall
\tau_1\leq \tau_2$ we have the boundedness of the integrated energy
\begin{equation}
\label{boundILE} I^\a[\phi]_0^\infty\les \tilde{E}[\phi](0)+\delta_0 S^\a[\phi](0)+D^\a[F]_0^\infty.
\end{equation}
If in addition we have
\[
I^\a[\phi]_0^{\infty}=\int_{0}^{\infty}\int_{\Si_\tau}\frac{|\bar\pa\phi|^2}{(1+r)^{1+\a}}dxd\tau<\infty,
\]
then
\begin{itemize}
\item[(1)] Integrated energy estimate
\begin{equation}
 \label{ILE0}
I^\a[\phi]_{\tau_1}^{\tau_2}+\int_{\tau_1}^{\tau_2}\int_{S_\tau}
\frac{|\nabb\phi|^2}{1+r}dxd\tau \leq C_{\a}(
\tilde{E}[\phi](\tau_1)+\delta_0 S^\a[\phi](\tau_i)+D^{\a}[F]_{\tau_1}^{\tau_2});
\end{equation}
\item[(2)] Energy bound
\begin{equation}
 \label{eb}
\tilde{E}[\phi](\tau_2)+E^N[\phi]_{\tau_1}^{\tau_2}\leq C_{\a}(
\tilde{E}[\phi](\tau_1)+
\delta_0 S^\a[\phi](\tau_i)+D^{\a}[F]_{\tau_1}^{\tau_2}),
\end{equation}
\end{itemize}
where $S^\a[\phi](\tau_i)=S^\a[\phi](\tau_1)+S^\a[\phi](\tau_2)$. The constant $C_{\a}$ depends only on $\a$.
The definitions for $I^\a[\phi]_{\tau_1}^{\tau_2}$, $S^\a[\phi](\tau)$,
$D^\a[F]_{\tau_1}^{\tau_2}$ can be found in the end of the previous section.
\end{prop}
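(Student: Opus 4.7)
The plan is to apply the energy identity \eqref{energyeq} with a combination of two multipliers: $X=\pa_t$ (with $\chi=0$) to get near-conservation of the energy flux, and a radial Morawetz-type multiplier $X=f(r)\pa_r$ with correction function $\chi=\tfrac12 f'(r)+f(r)/r$ to generate a coercive spacetime integral. A convenient choice is $f(r)=\a^{-1}(1-(1+r)^{-\a})$, which is bounded and whose derivative produces the weight $(1+r)^{-1-\a}$; the correction $\chi$ is tuned so that on Minkowski the coefficient of $\phi^2$ in $-\tfrac12\Box_g\chi$ combines with the $\chi\pa^\ga\phi\pa_\ga\phi$ term to yield a manifestly coercive bulk
\[
K^X[\phi]+\chi\pa^\ga\phi\pa_\ga\phi-\tfrac12\Box_g\chi\cdot\phi^2 \;\gtrsim\; \frac{|\bar\pa\phi|^2}{(1+r)^{1+\a}}+\frac{|\nabb\phi|^2}{1+r},
\]
with the bad $|\Lb\phi|^2$ contribution recovered by adding a small multiple of the $\pa_t$-identity.

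All error terms sourced by $h^{\mu\nu}$ have the schematic form $h\cdot\pa^2\phi\cdot X\phi$ plus $(\pa h)|\pa\phi|^2$; integrating the former by parts produces only first-order terms, which are then decomposed in the null frame $\{L,\Lb,S_1,S_2\}$. The key observation is that the dangerous $|\Lb\phi|^2$ only ever multiplies $h^{\Lb\Lb}$ or $\overline{\pa_v}h$, both of which satisfy $\les \delta_0 r_+^{-1-2\a}$ on $S_\tau$ by \eqref{HH}; inside $\{r\leq R\}$ the full bound $|h|+|\pa h|\les \delta_0$ makes every error term absorbable by the coercive bulk provided $\delta_0$ is small depending on $\a$. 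Hardy's inequality (Lemma \ref{lem2}) disposes of the $\phi^2$ terms arising from the correction $\chi$. The source $(F-N(\phi))(X\phi+\chi\phi)$ is handled by Cauchy--Schwarz: $F$ produces $D^\a[F]$, while $N(\phi)$ is treated exactly like an $h$-error using the identical pointwise bounds in \eqref{HH}, the worst component $N^{\Lb}\Lb\phi$ being controlled because $|N^{\Lb}|\les\delta_0 r_+^{-1-2\a}$.

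The main obstacle lies in the boundary contributions on $\Si_{\tau_1}$ and $\Si_{\tau_2}$. On $S_{\tau_i}$ the intermediate component of $h$ obeys only the weak bound $|h|\les\delta_0(r_+^{-1/2-2\a}\tau_+^{-1/2-\a/2}+r_+^{-1-2\a})$, so the quadratic boundary error $\int_{S_{\tau_i}} h\,|\pa\phi|^2\,r^2\,dv\,d\om$ cannot be absorbed into $E[\phi](\tau_i)$ outright; pairing the Cauchy--Schwarz weight $(1+r)^{-1-\a}$ with this pointwise bound yields exactly $\delta_0 S^\a[\phi](\tau_i)$, which is precisely the boundary term appearing in \eqref{ILE0} and \eqref{eb}. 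A parallel computation for the $\pa_t$-multiplier delivers \eqref{eb} by combining $\tilde E[\phi](\tau_2)+E^N[\phi]_{\tau_1}^{\tau_2}$ with the bulk error $\iint|\pa_t h||\pa\phi|^2$, whose dangerous $r_+^{-1/2-2\a}\tau_+^{-1/2-\a/2}$ part is absorbed into the coercive bulk from the Morawetz multiplier via the null-frame decomposition just described. Finally, for the uniform bound \eqref{boundILE} one runs the argument on a truncation $[0,T]$ and exploits the positivity of the boundary flux at $\tau=T$: the coercive bulk dominates $\delta_0 S^\a[\phi](T)$ after choosing $\delta_0$ small, so a $T$-independent inequality follows, and one passes to the limit $T\to\infty$.
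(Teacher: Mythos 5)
Your overall architecture (the multiplier $f\pa_r$ with a lower-order correction $\chi$, plus the $\pa_t$ multiplier; null-frame decomposition of the $h$-errors; the better decay of $h^{\Lb\Lb}$ and $\overline{\pa_v}h$ saving the $|\Lb\phi|^2$ terms; boundary errors on $S_{\tau_i}$ producing $\delta_0 S^\a[\phi](\tau_i)$) coincides with the paper's. But there is a genuine gap in how you close the estimates. When Cauchy--Schwarz is applied to the error terms $h\cdot\Lb\phi\cdot\overline{\pa_v}\phi$ with the intermediate bound $|h|\les\delta_0 r_+^{-\f12-2\a}\tau_+^{-\f12-\f12\a}$ on $S_\tau$, one piece is $\delta_0 r_+^{-1-4\a}|\Lb\phi|^2$ (absorbable) but the other is $\delta_0\,\tau_+^{-1-\a}|\overline{\pa_v}\phi|^2$. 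You claim this is ``absorbed into the coercive bulk from the Morawetz multiplier,'' but that bulk carries the spatial weight $r_+^{-1-\a}$, and on $S_\tau$ the radius $r$ is unbounded for fixed $\tau$, so $\tau_+^{-1-\a}\not\les r_+^{-1-\a}$ pointwise. This term integrates to $\delta_0\int_{\tau_1}^{\tau_2}\tau_+^{-1-\a}\tilde{E}[\phi](\tau)\,d\tau$, and --- precisely because $S^\a[\phi](\tau_2)$ already sits on the right-hand side of the energy inequality --- Gronwall cannot be applied directly. The paper closes this by setting $\tau_2=\tau$, multiplying the energy inequality by $\tau_+^{-1-\a}$, integrating in $\tau$, and using the integrated energy estimate to bound $\int S^\a[\phi](\tau)\,d\tau$; this averaging step is absent from your proposal and is not cosmetic.

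The second gap is in \eqref{boundILE}. You truncate at a slice $\Si_T$ and assert that ``the coercive bulk dominates $\delta_0 S^\a[\phi](T)$.'' It does not: $S^\a[\phi](T)$ is a flux on a single null slice, while $I^\a[\phi]_0^T$ is a spacetime integral; the latter controls only the $\tau$-average of the former, not its value at every $T$ (one can only extract a good subsequence $\tau_n$, which is exactly what Proposition \ref{propILED} later exploits). The paper sidesteps this by truncating instead with the spacelike slice $\{t=t_1\}$, on which the future boundary term of the $\pa_t$-identity is the genuine positive energy $E(t_1)=\int_{r\le t_1+R}|\pa\phi|^2dx$ and the $h$-contamination is $O(\delta_0)|\pa\phi|^2$, hence absorbable into the boundary term itself with no residual $S^\a$-type flux; the remaining bulk error $D(t_1)=\int_0^{t_1}\int_{S_\tau\cap\{v\le v(\tau)\}}\tau_+^{-1-\a}|\overline{\pa_v}\phi|^2$ is then closed by the same weighted-integration-in-$\tau_1$ device, yielding $D(t_1)\les C_0+D^\a[F]_0^{t_1}$ uniformly in $t_1$. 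Without one of these two devices your truncation argument does not produce a $T$-independent bound.
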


To prove \eqref{ILE0} and \eqref{eb}, we need a priori asymptotical estimate for the solution, i.e., in this proposition, we assume the
integrated energy $I^\a[\phi]_0^{\infty}$ is finite. The inequality \eqref{boundILE} will be used to verify this condition with appropriate
initial condition and some boundedness of the inhomogeneous term $F$.
\begin{remark}
We mention here that variants and generalizations of estimate
\eqref{ILE0} can also be found in \cite{sogge-metcalfe2},
\cite{sogge-metcalfe} and reference therein. However, the conditions on the given metric $g$ here is more general and the foliations used here are different.
\end{remark}
We use the vector field method to prove the above proposition. More precisely, we construct vector fields $X=f\pa_r$, $f$ is a function of $r$. Using the energy identities
\eqref{energyeq} applied to the region bounded by $\Si_{\tau_1}$, $\Si_{\tau_2}$, we are able to derive the integrated energy estimates as well as the energy estimates.

The proof for the integrated energy estimate \eqref{ILE0} and the energy estimate \eqref{eb} is modification of that in \cite{yang1} for wave equation on curved background. The only
difference is to control the boundary terms on $S_\tau$. For
completeness, we roughly repeat the proof here.

To avoid to many constant, in this section, we make the convention that $A\les B$ means $A\leq CB$ for some constant $C$ depending only on $\a$.

\subsubsection{The vector field $f\pa_r$}
\label{section311}
Let $v>\frac{\tau_1+R}{2}$. Consider the region $\mathcal{D}$ bounded by $\Si_{\tau_1}$, $\Si_{\tau_2}$ and the incoming null hypersurface $\bar C(\tau_1, \tau_2, v)$. Let
\[
\Si_\tau^v:=\Si_\tau\cap\{t+r\leq 2v\}.
\]
Take the vector field
$X$ as follows
$$X=f\pa_r$$ for some function $f$ of $r$ such that $f(0)=0$. Hence $X$ is a well defined vector filed on $\mathbb{R}^{3+1}$.
Thus in the energy inequality \eqref{energyeq}, we can compute the current
$K^X[\phi]$
\begin{align*}
K^X[\phi]=\mathbb{T}^{\mu\nu}[\phi]\pi^X_{\mu\nu}&=\pa_j(f\frac{x_i}{r})\pa^{j}\phi\cdot \pa_i \phi-(\f12 f'+r^{-1}f)\pa^{\gamma}\phi \pa_{\gamma}\phi\\
& \quad-\f12 f\pa_r g^{\mu\nu}\cdot \pa_{\mu}\phi
\pa_{\nu}\phi+\frac{1}{4}f \pa_{r}g^{\mu\nu}\cdot
g_{\mu\nu}\pa^{\gamma}\phi\pa_{\gamma}\phi,
\end{align*}
where we denote $f'$ as $\pa_r f$.

Next we choose the function $\chi$ in the modified vector field
\eqref{mcurent} to be
\[
 \chi=r^{-1}f.
\]
Then from the energy identity \eqref{energyeq}, we obtain
\begin{align}
\label{menergyeq}
&\int_{{\Sigma}_{\tau_1}^v}i_{\tilde{J}^X[\phi]}d\vol -
\int_{{\Sigma}_{\tau_2}^v}i_{\tilde{J}^X[\phi]}d\vol+
\int_{\bar C(\tau_1, \tau_2, v)}i_{\tilde{J}^X[\phi]}d\vol\\
\notag&=\int_{\tau_1}^{\tau_2}\int_{\Sigma_\tau^v} \Box_g
\phi(X(\phi)+\phi\chi)+\f12 f'(|\pa_r\phi|^2+|\pa_t\phi|^2)
+(\chi-\f12 f')|\nabb\phi|^2- \f12\Box_g\chi\cdot\phi^2 +E(X)d\vol,
\end{align}
where the error term $E(X)$ is given as follows
\begin{equation}
 \label{error}
\begin{split}
E(X) = &\pa_j(f\frac{x_i}{r})h^{j\mu}\pa_{\mu}\phi\cdot \pa_i
\phi-\f12 f'h^{\mu\nu}\pa_{\mu}\phi \pa_{\nu}\phi-\f12 f\pa_r
g^{\mu\nu}\cdot \pa_{\mu}\phi \pa_{\nu}\phi+\frac{1}{4}f\pa_{r}
g^{\mu\nu}\cdot g_{\mu\nu}\pa^{\gamma}\phi\pa_{\gamma}\phi.
\end{split}
\end{equation}
We now explicitly
construct the function $f$ as follows
$$f=2\a^{-1}-\frac{2\a^{-1}}{(1+r)^{\a}},\quad \chi=r^{-1}f.$$ We have
\begin{align*}
 \chi-r^{-1}f + \f12 f'=r^{-1}f + \f12 f' - \chi=\frac{1}{(1+r)^{\alpha +1}}
\end{align*}
In particular, when $r\geq R>8$, we have the following improved estimate for
$\chi-\f12 f'$
\begin{equation}\label{improvnabb}\chi- \f12
f'\geq \frac{\b}{r} - \frac{1+\beta}{r(1+r)^\a}\geq \frac{1}{r},
\quad r\geq R.
\end{equation}
This improved estimate will be used to show the improved integrated
energy estimate \eqref{ILE0} for the angular derivative of the
solution. We can estimate
\begin{align*}
 |(\Box_g-\Delta) \chi|&=|h^{ij}\pa_{ij}\chi+(\pa_\mu g^{\mu i}+\f12 g^{\mu i}\pa_{\mu}g_{\nu\gamma}\cdot g^{\nu\gamma})\pa_i\chi|\les \frac{|h|+|\pa h|}{r(1+r)},
\end{align*}
where $\Delta$ is the Laplacian operator on $\mathbb{R}^3$. From the assumption \eqref{HH}, we have
\begin{equation*}
|(\Box_g-\Delta) \chi|\les
 \begin{cases}
 \frac{\delta_0}{r(1+r)^{2+\a}},\quad r\leq R;\\
\frac{\delta_0 r_+^{-\f12-\a}\tau_+^{-\f12-\a}}{(1+r)^2}\leq \frac{\delta_0}{(1+r)^2}(r_+^{-1-\a}+\tau_+^{-1-\a}),\quad (t, x)\in S_\tau.
 \end{cases}
\end{equation*}
Using Lemma \ref{lem2} to control the integral of $\frac{\phi^2}{(1+r)^2}$, we can show that
\begin{align*}
 \int_{\tau_1}^{\tau_2}\int_{\Sigma_\tau^v}|&(\Box_g-\Delta)\chi|\phi^2 d\vol\les \delta_0\int_{\tau_1}^{\tau_2}\int_{r\leq R}
\frac{|\phi|^2}{r(1+r)^{2+\a}}dxd\tau\\&+\delta_0\int_{\tau_1}^{\tau_2}\int_{\Sigma_\tau}\frac{|\pa\phi|^2}{(1+r)^{1+\a}}dxd\tau+\delta_0\int_{\tau_1}^{\tau_2}\tau_+^{-1-\a}\tilde{E}[\phi](\tau)d\tau.
\end{align*}
Recall that $-\Delta\chi=\frac{2(1+\a)}{r(1+r)^{2+\a}}$. Then from the above energy inequality \eqref{menergyeq}, we obtain
\begin{align}
\label{menergyeq1} &\int_{\tau_1}^{\tau_2}\int_{\Sigma_\tau^v}
\frac{|\bar\pa\phi|^2}{(1+r)^{1+\a}} +E(X)dx d\tau\les|\int_{{\Sigma}_{\tau_1}^v}i_{\tilde{J}^X[\phi]}d\vol-\int_{\bar C(0,\tau_1, v)}i_{\tilde{J}^X[\phi]}d\vol|\\
\notag
&\quad+|\int_{{\Sigma}_{\tau_2}^v}i_{\tilde{J}^X[\phi]}d\vol-\int_{\bar C(0, \tau_2, v)}i_{\tilde{J}^X[\phi]}d\vol|+
\int_{\tau_1}^{\tau_2}\int_{\Sigma_\tau^v}
|\Box_g\phi||X(\phi)+\chi\phi|dx d\tau\\
\notag
&\quad+\delta_0\int_{\tau_1}^{\tau_2}\tau_+^{-1-\a}\tilde{E}[\phi](\tau)d\tau+\int_{\tau_1}^{\tau_2}\int_{r\leq R}
\frac{\delta_0|\bar\pa\phi|^2}{(1+r)^{1+\a}}dxd\tau+\int_{\tau_1}^{\tau_2}\int_{\Sigma_\tau}\frac{\delta_0|\pa\phi|^2}{(1+r)^{1+\a}}dxd\tau.
\end{align}
After taking the limit $v\rightarrow\infty$, for sufficiently small $\delta_0$ the last two terms in the last line will be absorbed.
We will later show that integral of the error term $E(X)$ can be absorbed. We first demonstrate that the integral on
the boundary can be bounded by the energy $E[\phi](\tau)$.
\begin{lem}
\label{prop1}
We have
\begin{equation*}
\liminf\limits_{v\rightarrow \infty}\left|\int_{{\Sigma}_{\tau}^v}i_{\tilde{J}^X[\phi]}d\vol-\int_{\bar C(0,\tau, v)}
i_{\tilde{J}^X[\phi]}d\vol\right|\les \tilde{E}[\phi](\tau)+\delta_0 S^\a[\phi](\tau).
\end{equation*}
\end{lem}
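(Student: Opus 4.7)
The plan is to split $\Si_\tau^v = (\Si_\tau \cap \{r\le R\}) \cup S_\tau^v$, where $S_\tau^v = S_\tau \cap \{t+r\le 2v\}$, and then, by the triangle inequality, estimate the three boundary integrals separately on the disk, on $S_\tau^v$, and on $\bar C(0,\tau,v)$, using the explicit formulas \eqref{curlessR}, \eqref{curStau} and \eqref{curnullinfy}. Recall that the multiplier $X=f\pa_r$ satisfies $f(0)=0$ and that $f$, $\chi=r^{-1}f$ are smooth and uniformly bounded; in the null frame $X^L=f/2$ and $X^{\Lb}=-f/2$.

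On the disk $\{r\le R\}$, formula \eqref{curlessR} presents the integrand as a pointwise quadratic form in $\pa\phi$ with bounded coefficients together with a $\phi^2$-contribution with bounded coefficient. The first piece is $\les E[\phi](\tau)$ and the second is $\les \tilde E[\phi](\tau)$ by Lemma \ref{lem2}. On the incoming cap $\bar C(0,\tau,v)$ I would expand $\pa^L\phi$ and $\pa^\ga\phi\pa_\ga\phi$ in the null frame; the Minkowski principal part of $\pa^L\phi \cdot X(\phi) - \tfrac12 X^L\pa^\ga\phi\pa_\ga\phi$ reduces, after the cross terms $L\phi\,\Lb\phi$ cancel, to $f|\Lb\phi|^2 - \tfrac f4 |\nabb\phi|^2$. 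Its $\liminf$ as $v\to\infty$ is therefore bounded by a multiple of $E^N[\phi]_0^\tau \le \tilde E[\phi](\tau)$. The $\phi^2$- and $\chi\phi\,\pa\phi$-terms are handled by Lemmas \ref{lem1}, \ref{lem2}, and the $h$-corrections absorb into $\delta_0 \tilde E[\phi](\tau)$ by Cauchy--Schwarz and the decay of $h$.

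The main technical step is the flux on $S_\tau^v$, which is delicate because $X^{\Lb}\ne 0$ and $E[\phi](\tau)$ contains no control of the bad derivative $\Lb\phi$ on $S_\tau$. The rescue comes from two matched facts. First, a direct computation in the null frame gives
\[
\pa^{\Lb}\phi \cdot X(\phi) - \tfrac12 X^{\Lb} \pa^\ga\phi\,\pa_\ga\phi \;=\; -f|L\phi|^2 + \tfrac f4 |\nabb\phi|^2 + (\text{terms with coefficient } h),
\]
i.e.\ the dangerous cross terms $fL\phi\,\Lb\phi$ arising from $\pa^{\Lb}\phi\cdot X(\phi)$ and from $X^{\Lb}\pa^\ga\phi\pa_\ga\phi$ cancel in the Minkowski principal part, leaving only good derivatives which, integrated against the factor $r^2\,dv\,d\om$, are controlled by $E[\phi](\tau)$. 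Second, among the perturbative terms the only $h$-coefficient capable of creating a pure $|\Lb\phi|^2$ contribution is $h^{\Lb\Lb}$, and the assumption \eqref{HH} provides the improved decay $|h^{\Lb\Lb}|\le \delta_0 r_+^{-1-2\a}$ on $S_\tau$; Cauchy--Schwarz and the definition of $S^\a[\phi](\tau)$ then yield
\[
\int_{S_\tau^v} r^2\,|h^{\Lb\Lb}|\,|\Lb\phi|\,|\pa\phi|\,dv\,d\om \;\les\; \delta_0\, S^\a[\phi](\tau).
\]
The remaining $h$-coefficients in $\pa^{\Lb}\phi$ and $\pa^\ga\phi\pa_\ga\phi$ either multiply good derivatives $\overline{\pa_v}\phi$ or carry the stronger decay $r_+^{-1/2-2\a}\tau_+^{-1/2-\a/2}$ from \eqref{HH}, so their contributions fit into $\delta_0\tilde E[\phi](\tau)$ as well, while the $\chi$- and $\phi^2$-terms are again controlled by Lemmas \ref{lem1}, \ref{lem2}.

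The main obstacle is precisely this $\Lb\phi$ issue on $S_\tau$: without the Minkowski-level cancellation above, the $fL\phi\,\Lb\phi$ cross term in the principal part would be uncontrollable by the energy $E[\phi](\tau)$; and without the extra decay of $h^{\Lb\Lb}$ imposed in \eqref{HH}, the perturbative $|\Lb\phi|^2$ contribution could not be absorbed into $\delta_0 S^\a[\phi](\tau)$. Every other ingredient is a routine application of Lemmas \ref{lem1}, \ref{lem2} and the assumed decay properties of $h$.
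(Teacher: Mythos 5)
Your treatment of the disk $\{r\le R\}$ and of the outgoing cone $S_\tau$ is correct and in substance identical to the paper's: the cancellation of the $fL\phi\,\Lb\phi$ cross terms in the Minkowski principal part of $\pa^{\Lb}\phi\,X(\phi)-\f12 X^{\Lb}\pa^\ga\phi\pa_\ga\phi$ (the paper organizes the same computation as $|\pa^{\Lb}\phi L(\phi)|+\f12|g^{\bar A\bar B}\bar A(\phi)\bar B(\phi)-g^{\Lb\Lb}\Lb(\phi)\Lb(\phi)|$), together with the observation that the only surviving pure $(\Lb\phi)^2$ carries the coefficient $g^{\Lb\Lb}=h^{\Lb\Lb}$ with the improved decay $r_+^{-1-2\a}$ and is therefore absorbed into $\delta_0 S^\a[\phi](\tau)$. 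A minor imprecision: the mixed terms of the form $H\,\overline{\pa_v}\phi\,\Lb\phi$ also require $\delta_0 S^\a[\phi](\tau)$ after Cauchy--Schwarz, not only $\delta_0\tilde{E}[\phi](\tau)$, but since the stated bound contains both quantities this is harmless.

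The genuine gap is on the incoming cap $\bar C(0,\tau,v)$. You assert that the $h$-corrections there ``absorb into $\delta_0\tilde{E}[\phi](\tau)$ by Cauchy--Schwarz and the decay of $h$.'' But the flux through an incoming null hypersurface controls only $|\pa_u\phi|^2+|\nabb\phi|^2$; it gives no control of $\pa_v\phi$, whereas the correction terms $h^{LA}A(\phi)X(\phi)-\f12X^L h^{\mu\nu}\pa_\nu\phi\pa_\mu\phi+\dots$ contain $(\pa_v\phi)^2$ and $\pa_v\phi\,\pa_u\phi$ with coefficients decaying only like $r_+^{-1/2-2\a}$ on the cone. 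Neither $\tilde{E}[\phi](\tau)$ nor $S^\a[\phi](\tau)$ (which lives on the outgoing cone) bounds $\int_{\bar C(0,\tau,v)}|\bar\pa\phi|^2\, r_+^{-1/2-2\a}\,r^2\,du\,d\om$, so Cauchy--Schwarz alone cannot close this. The paper closes it by invoking the standing hypothesis $I^\a[\phi]_0^\infty<\infty$ of Proposition \ref{ILEthm}: a pigeonhole argument produces a sequence $v_n\to\infty$ with $\int_{\bar C(0,\tau,v_n)}|\bar\pa\phi|^2 r_+^{-1-\a}r^2\,du\,d\om\le Mv_n^{-1}$, whence the $h$-corrections are $O(v_n^{-1/2})$ and vanish in the $\liminf$, leaving only the Minkowski flux $E^N[\phi]_0^\tau\le\tilde{E}[\phi](\tau)$. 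Your proof never uses the finiteness of $I^\a[\phi]_0^\infty$, and without that subsequence argument the incoming boundary term cannot be estimated as claimed.
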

\begin{proof}
The boundary $\Si_\tau^v\cup \bar C(0, \tau, v)$ consists of three parts: the spacelike $t$-constant slice $\{r\leq R\}$, the outgoing null hypersurface $S_\tau$ and
the incoming null hypersurface $\bar C(0, \tau, v)$. On the $t$-constant slice restricted to the region $\{r\leq R\}$, we use the formula
\eqref{curlessR}. Recall that
\[
 |\chi|\les\frac{1}{1+r}, \quad |f|\les 1, \quad |\chi'|\les \frac{1}{(1+r)^2}.
\]
We can show that
\[
\left|\int_{\Si_\tau\cap \{r\leq R\}}i_{\tilde{J}^X[\phi]}d\vol\right|\les \int_{\Si_\tau\cap\{r\leq R\}}|\bar\pa\phi|^2dx\les \tilde{E}[\phi](\tau).
\]
On $S_\tau$, using the formula \eqref{curStau}, we have
\begin{align*}
i_{\tilde{J}^{X}[\phi]}d\vol=-2\sqrt{-G}r^2(\pa^{\Lb}\phi
X(\phi)-\f12 X^{\Lb}\pa^\ga\phi\pa_\ga\phi-\f12
\pa^{\Lb}\chi\cdot\phi^2+\chi \pa^{\Lb}\phi\cdot\phi)dvd\om.
\end{align*}
We first estimate the last two terms in the above expression.
Recall that
 \[
|\chi|\les (1+r)^{-1},\quad |\pa^{\Lb}\chi|\les (1+r)^{-2}
\]
and note that
\[
|\pa^{\Lb}\phi|\leq |h||\pa\phi|+|\overline{\pa_v}\phi|,
\]
where $\overline{\pa_v}=(L, S_1, S_2)$. By the assumption \eqref{HH}, we can bound
\[
|-\f12 \pa^{\Lb}\chi \cdot \phi^2+\chi
\pa^{\Lb}\phi\cdot\phi|\les
\frac{\phi^2}{(1+r)^2}+|\overline{\pa_v}\phi|^2+\delta_0
\frac{|\pa\phi|^2}{(1+r)^{1+\a}}.
\]
The integral of the first two terms on the right hand side of the above inequality can be bounded by
the energy flux through $S_\tau$ by using Lemma \ref{lem2}. The integral of the
third term, by the definition, is exactly $S^\a[\phi](\tau)$, which will be absorbed for sufficiently small $\delta_0$.

Now to estimate
$\int_{S_\tau\cap\{t+r\leq 2v\}}i_{\tilde{J}^{X}[\phi]}d\vol$, it remains to estimate the integral of the first two
terms on the right hand side of the expression for $i_{\tilde{J}^{X}[\phi]}d\vol$. Recall
that $X=f\pa_r$, $\pa_r=\f12(L-\Lb)$, $|f|\les 1$. In particular,
we have $X=\f12 f(L-\Lb)$, $X^{\Lb}=-\f12 f$. Hence we have
\begin{align*}
|\pa^{\Lb}\phi X(\phi)-\f12
X^{\Lb}\pa^\ga\phi\pa_\ga\phi|&\les |\pa^{\Lb}\phi
L(\phi)|+|\f12 \pa^\ga\phi\pa_\ga\phi-\pa^{\Lb}\phi \Lb(\phi)|\\
&\les |\pa^{\Lb}\phi L(\phi)|+\f12|g^{\bar A\bar B}\bar
A(\phi)\bar B(\phi)-g^{\Lb\Lb}\Lb(\phi)\Lb(\phi)|\\
&\les \delta_0(1+r)^{-1-\a} |\pa\phi|^2+|\overline{\pa_v}\phi|^2,
\end{align*}
where $\bar A, \bar B\in\{L, S_1, S_2\}$. Therefore we can estimate
\begin{align*}
\left|\int_{S_\tau\cap\{t+r\leq 2v\}}i_{\tilde{J}^X[\phi]}d\vol\right|&\les
\tilde{E}[\phi](\tau)+ \int_{S_\tau}\frac{\delta_0|\pa\phi|^2}{(1+r)^{1+\a}}
r^2dvd\om\les \tilde{E}[\phi](\tau)+\delta_0 S^\a[\phi](\tau).
\end{align*}
Finally, we estimate the integral on the incoming null hypersurface $\bar C(0, \tau, v)$. Since the metric is asymptotically flat, using the
formula \eqref{curnullinfy}, we can split the expression of $i_{\tilde{J}^{X}[\phi]}d\vol$ according to the decomposition of the metric
$g=h+m_0$
\begin{align}
\notag &\pa^L\phi X(\phi)-\f12X^L\pa^\ga\phi\pa_\ga\phi- \f12\pa^L\chi \phi^2+\chi
\pa^L\phi\cdot\phi\\
\label{formu0}
&=h^{LA}A(\phi)X(\phi)-\f12X^L h^{\mu\nu}\pa_\nu\phi\pa_\mu\phi-\f12 h^{LA}A(\chi)\phi^2+\chi h^{L A}A(\phi)\phi\\
\notag &\quad+\frac{f}{4}(|\pa_u\phi|^2-|\nabb\phi|^2)-\frac{1}{4}\chi' \phi^2-\f12 \chi \pa_u\phi \phi.
\end{align}
Recall that
\[
|f|\les 1,\quad |\chi'|\les \frac{1}{(1+r)^2},\quad 2|\chi \pa_u\phi\cdot \phi|\leq \chi^2\phi^2+|\pa_u\phi|^2.
\]
On $\bar C(0, \tau, v)$, we can use a similar version of Lemma \ref{lem2} to control the integral of $|\chi'|\phi^2$, $|\chi\phi|^2$ by the
energy flux through $\bar C(0, \tau, v)$. That is we can estimate
\begin{align*}
&\liminf\limits_{v\rightarrow \infty} |\int_{\bar C(0,\tau, v)}2(\frac{f}{4}(|\pa_u\phi|^2-|\nabb\phi|^2)-\frac{1}{4}\chi' \phi^2-\f12 \chi
\pa_u\phi \phi)r^2\sqrt{-G}dud\om|\\
&\les \liminf\limits_{v\rightarrow \infty} \int_{\bar C(0, \tau, v)}(|\pa_u\phi|^2+|\nabb\phi|^2) r^2 dud\om=E^N[\phi]_0^\tau.
\end{align*}
Next we need to control the error terms which consist of the second line of the decomposition \eqref{formu0}. Since we assumed that
\[
I^\a[\phi]_0^\infty=\int_0^{\infty}\int_{\Si_\tau}\frac{|\bar\pa\phi|^2}{(1+r)^{1+\a}}dxd\tau
\]
is finite. In particular, we can choose a sequence $v_n\rightarrow\infty$ so that
\[
\int_{\bar C(0, \tau, v_n)}\frac{|\bar\pa\phi|^2}{(1+r)^{1+\a}}r^2dud\om\leq M v_n^{-1}
\]
for some constant $M$. Therefore by the assumption on the metric \eqref{HH}, we have
\begin{align*}
&|\int_{\bar C(0, \tau, v_n)}(h^{LA}A(\phi)X(\phi)-\f12X^L h^{\mu\nu}\pa_\nu\phi\pa_\mu\phi-\f12 h^{LA}A(\chi)\phi^2+\chi h^{L
A}A(\phi)\phi)r^2dud\om|\\
&\les \int_{\bar C(0, \tau, v_n)}|\bar\pa\phi|^2 r_+^{-\f12-\a} r^2 dud\om\les v_n^{\f12}\int_{\bar C(0, \tau, v_n)}|\bar\pa\phi|^2 r_+^{-1-\a}
r^2 dud\om\les M v_n^{-\f12}.
\end{align*}
Hence from the formula \eqref{curnullinfy}, we have shown that
\begin{align*}
&\liminf\limits_{v\rightarrow \infty} |\int_{\bar C(0,\tau, v)}i_{\tilde{J}^X[\phi]}d\vol|\les E^N[\phi]_0^\tau+\lim\limits_{n\rightarrow}M
v_n^{-\f12}=E^N[\phi]_0^\tau.
\end{align*}
The Lemma then follows as $E^N[\phi]_0^\tau\leq \tilde{E}[\phi](\tau)$.
\end{proof}

\bigskip

This lemma implies that the boundary terms on the right hand
side of the integrated energy estimate \eqref{menergyeq1} can be bounded by the energy flux plus an
error. Next we estimate the main error term $E(X)$ defined in line
\eqref{error}. We can compute
\[
|\pa_j(r^{-1}fx_i)|=|\pa_j(\chi x_i)|\les \frac{1}{1+r}.
\]
Thus the first term in line \eqref{error} can be controlled by
\[
|\pa_j(r^{-1}f x_i)h^{j\mu}\pa_\mu\phi\pa_i\phi|\les \delta_0 (1+r)^{-1-\a}|\pa\phi|^2.
\]
For the other terms, the idea is that if the good derivative
$\overline{\pa_v}$ hits on $\phi$ we can use Cauchy-Schwartz
inequality to bound it by $\tau_+^{-1-\a}|\overline{\pa_v}\phi|^2$ plus $r_+^{-1-\a}|\pa\phi|^2$. For the bad
term $\Lb(\phi) \Lb(\phi)$, we rely on the better decay of the
metric component $g^{\Lb\Lb}$. First for any vector field $Y$ such
that $Y( \Lb_\mu)=0$, $\|Y\|\leq 2$, we can write
\begin{align*}
 Y(g^{\mu\nu})\pa_\mu\phi\pa_\nu\phi&=Y(g^{\mu\nu})(\paL_\mu+\f12
\Lb_\mu\Lb)\phi(\paL_\nu+\f12\Lb_\nu\Lb)\phi\\
&=Y(g^{\mu\nu})\paL_\mu\phi\paL_\nu\phi+Y( g^{\mu\nu})
\Lb_\mu\paL_\mu\phi \Lb(\phi)+\frac{1}{4}Y
(g^{\mu\nu}\Lb_\mu\Lb_\nu)\Lb(\phi)\Lb(\phi).
\end{align*}
Here $\paL_\nu=\pa_\nu-\f12 \Lb_\nu \Lb$. From the assumption on the metric
\eqref{HH}, we can estimate
\begin{equation*}
|Y(g^{\mu\nu})\pa_\mu\phi\pa_\nu\phi|\les
\begin{cases}
\delta_0(1+r)^{-1-\a}
|\pa\phi|^2,\quad |x|\leq R,\\
\delta_0(1+r)^{-1-\a} |\pa\phi|^2+\delta_0 \tau_+^{-1-\a}|\overline{\pa_v}\phi|^2,\quad (t, x)\in S_\tau.
\end{cases}
\end{equation*}
Similarly, we have
\begin{equation}
\label{nullforest}
|h^{\mu\nu}\pa_\mu\phi\pa_\nu\phi|\les
\begin{cases}
\delta_0(1+r)^{-1-\a}
|\bar\pa\phi|^2,\quad |x|\leq R,\\
\delta_0(1+r)^{-1-\a} |\bar\pa\phi|^2+\delta_0 \tau_+^{-1-\a}|\overline{\pa_v}\phi|^2,\quad (t, x)\in S_\tau.
\end{cases}
\end{equation}
Here $\bar\phi=(\pa\phi, r_+^{-1}\phi)$. In particular, we conclude that
\begin{align*}
\f12|f'
h^{\mu\nu}\pa_\mu\phi\pa_\nu\phi|,\quad |\pa_r g^{\mu\nu}\cdot g_{\mu\nu}\pa^\ga\phi\pa_\ga\phi|
\end{align*}
verify the same estimates as in \eqref{nullforest}. Since $|f|\les 1$, $\pa_r(\Lb_\mu)=0$, the integral of the error term $E(X)$ obeys the
following estimates
\begin{align}
\notag
\int_{\tau_1}^{\tau_2}\int_{\Si_\tau}|E(X)|dx
d\tau\les& \delta_0 \int_{\tau_1}^{\tau_2}\int_{r\leq R}\frac{|\bar\pa\phi|^2}{(1+r)^{1+\a}}dx d\tau+\delta_0
\int_{\tau_1}^{\tau_2}\int_{\Si_\tau}\frac{|\bar\pa\phi|^2}{(1+r)^{1+\a}}dx d\tau\\&
\label{errExint}
+\delta_0\int_{\tau_1}^{\tau_2}\tau_+^{-1-\a}\tilde{E}[\phi](\tau)d\tau.
\end{align}
Finally, we discuss the inhomogeneous term $F$ and the linear term $N(\phi)$. For the linear term $N(\phi)$, note that
\[
|X(\phi)+\chi\phi|\les |\bar\pa\phi|.
\]
Therefore $|N(\phi)(X(\phi)+\chi\phi)|$ also satisfy the estimate \eqref{nullforest}. The inhomogeneous term $F$ can be bounded as follows
\[
|F(X(\phi)+\chi\phi)|\les \ep_0^{-1}(1+r)^{1+\a}|F|^2+\ep_0(1+r)^{-1-\a}|\bar \pa\phi|^2,\quad \ep_0>0.
\]
Now using Lemma \ref{prop1} to control the boundary terms on $\Si_\tau$, for sufficiently small $\delta_0$, $\ep_0$, depending only on $\a$, the
integrated energy inequality \eqref{menergyeq1} leads to
\begin{align*}
I^\a[\phi]_{\tau_1}^{\tau_2}&=\int_{\tau_1}^{\tau_1}\int_{\Si_\tau}\frac{|\bar\pa\phi|^2}{(1+r)^{1+\a}}dxd\tau
\les \delta_0 \int_{\tau_1}^{\tau_2}\int_{r\leq R}\frac{|\bar\pa\phi|^2}{(1+r)^{1+\a}}dx d\tau\\&+\tilde{E}[\phi](\tau_i)+\delta_0
S^\a[\phi](\tau_i)+\delta_0\int_{\tau_1}^{\tau_2}\tau_+^{-1-\a}\tilde{E}[\phi](\tau)d\tau+D^{\a}[F]_{\tau_1}^{\tau_2}.
\end{align*}
If $\delta_0$ is also small depending only on $\a$, then the first term on the RHS of the above estimate could be absorbed. Thus we have
\begin{equation} \label{ILEE}
I^\a[\phi]_{\tau_1}^{\tau_2}
\les \tilde{E}[\phi](\tau_i)+\delta_0
S^\a[\phi](\tau_i)+\delta_0\int_{\tau_1}^{\tau_2}\tau_+^{-1-\a}\tilde{E}[\phi](\tau)d\tau+D^{\a}[F]_{\tau_1}^{\tau_2}.
\end{equation}
Here for simplicity, $\tilde{E}[\phi](\tau_i)$ denotes $\tilde{E}[\phi](\tau_1)+\tilde{E}[\phi](\tau_2)$, similarly for $S^\a[\phi](\tau_i)$.

To prove \eqref{boundILE}, we choose the domain to be the finite region bounded by $\Si_0$ and $\{t=t_1\}$ and we do the same estimates as
above. Denote
\begin{align*}
&v(t_1)=2t_1-\tau, \quad C_0=\tilde{E}[\phi](0)+S^\a[\phi](0).\\
& E(t)=\left.\int_{r\leq t+R}|\pa\phi|^2dx \right|_{t=t},\quad D(t)=\int_{0}^{t}\int_{S_\tau\cap\{v\leq
v(\tau)\}}\tau_+^{-1-\a}|\overline{\pa_v}\phi|^2r^2dvd\om d\tau.
\end{align*}
We can show that
\begin{equation}
\label{boundILE0}
\begin{split}
\int_{0}^{t_1}\int_{\Si_\tau^{v(t_1)}}\frac{|\bar\pa\phi|^2}{(1+r)^{1+\a}}dxd\tau\les C_0+E(t_1)+\delta_0 D(t_1)+D^\a[F]_0^{t_1}.
\end{split}
\end{equation}
The energy flux $E(t_1)$ plays the same role as $\tilde{E}[\phi](\tau_2)+\delta_0 S^\a[\phi](\tau_2)$. The only difference is that $D(t)$ does
not contain the part from the cylinder $\{r\leq R\}$. The reason is that in fact the error term from this part could be absorbed for
sufficiently small $\delta_0$. Thus \eqref{boundILE0} holds for
all $t_1\geq 0$.

\subsubsection{The vector field $\pa_t$} We have taken the vector field $f\pa_r$ as multipliers to obtain the above integrated energy inequality
\eqref{ILEE} which will imply \eqref{ILE0} in Proposition \ref{ILEthm} if we can further control the energy flux $\tilde{E}[\phi](\tau)$. Next,
we take $\pa_t$ as multipliers to obtain the classical energy estimates.

Similar to the case when $X=f\pa_r$ discussed above, we apply the energy identity \eqref{energyeq} to the region bounded by
$\Si_{\tau_1}$, $\Si_{\tau_2}$ and the incoming null hypersurface $\bar C(\tau_1, \tau_2, v)$ and the vector field $X=\pa_t$, the function $\chi=0$. We obtain
\begin{align}
\notag &\int_{{\Sigma}_{\tau_1}^v}i_{J^X[\phi]}d\vol - \int_{{\Sigma}_{\tau_2}^v}i_{J^X[\phi]}d\vol+
\int_{\bar C(\tau_1, \tau_2, v)}i_{J^X[\phi]}d\vol\\
\label{iden0} &=\int_{\tau_1}^{\tau_2}\int_{\Sigma_\tau^v} \Box_g \phi X(\phi)+K^{X}[\phi]d\vol,
\end{align}
The estimates of the current $K^X[\phi]$ and the inhomogeneous term $\Box_g\phi$
are quite similar to the case when $X=f\pa_r$. And we can show that
\begin{align*}
&|K^{\pa_t}[\phi]|+|\Box_g \phi X(\phi)|=|-\pa_t
g^{\mu\nu}\pa_\mu\phi\pa_\nu\phi+\f12 \pa_t g^{\mu\nu}\cdot
g_{\mu\nu}\pa^\ga\phi\pa_\ga\phi|+|(F-N(\phi))\pa_t\phi|\\
&\les \delta_0\chi_{\{|x|\leq R\}}\frac{|\pa\phi|^2}{(1+r)^{1+\a}}+\frac{\delta_0|\pa\phi|^2}{(1+r)^{1+\a}}+\delta_0\chi_{\{|x|>R\}}\tau_+^{-1-\a}|\overline{\pa_v}\phi|^2
+\delta_0^{-1}(1+r)^{1+\a}|F|^2.
\end{align*}
Here $\chi_A$ is the characteristic function on the set $A$. Next we estimate the boundary terms. On $\Si_\tau^v\cap\{r\leq R\}$, using the formula \eqref{curlessR}, we have
\[
i_{{J}^X[\phi]}d\vol=(-\pa^t\phi\pa_t\phi+\f12
\pa^\ga\phi\pa_\ga\phi)\sqrt{-G}dx.
\]
Since $\delta_0$ is small, we can conclude that there is a positive constant $\la$ depending only $\delta_0$ such that
\[
 \la\int_{r\leq R}|\pa\phi|^2 dx\leq \int_{r\leq R}i_{{J}^X[\phi]}d\vol\leq \la^{-1}\int_{r\leq R}|\pa\phi|^2dx.
\]
On $S_\tau$, the formula \eqref{curStau} implies that
\begin{align*}
 i_{{J}^X[\phi]}d\vol&=-2(g^{\Lb A}A(\phi)\pa_t\phi-\frac{1}{4}\pa^\ga\phi\pa_\ga\phi)\sqrt{-G}r^2dvd\om=\f12(|\overline{\pa_v}\phi|^2+Err_1)\sqrt{-G}r^2dvd\om,
\end{align*}
where the error $Err_1$ obeys
\begin{align*}
|Err_1|=|4h^{\Lb
A}A(\phi)\pa_t\phi-h^{\ga\nu}\pa_\ga\phi\pa_\nu\phi| \les\delta_0\frac{|\pa\phi|^2}{(1+r)^{1+\a}}+\delta_0|\overline{\pa_v}\phi|^2.
\end{align*}
For sufficiently small $\delta_0$, depending only on $\a$, we can conclude that for some positive constants $C_1>4$, $C_2$, depending on
$\a$, $\la$, we have
\[
C_1^{-1}(\int_{\Si_\tau^v}|\overline{\pa_v}\phi|^2d\si-C_2\delta_0 S[\phi](\tau))\leq\int_{\Si_\tau^v}i_{J_{\mu}^{\pa_t}[\phi]}d\vol\leq C_1(
E[\phi](\tau)+\delta_0 S^\a[\phi](\tau)) ,
\]
where $d\si=dx$, $r\leq R$; $d\si=r^2dvd\om$, $r>R$.

Next we estimate the boundary term on $\bar C(\tau_1, \tau_2, v)$. On such incoming null hypersurfaces, we
have
\[
i_{{J}^{\pa_t}[\phi]}d\vol=\left(-\f12(|\pa_u\phi|^2+|\nabb\phi|^2)+2h^{L A}A(\phi)\pa_t\phi-\f12 h^{\mu\nu}\pa_\mu\phi\pa_\nu\phi\right)r^2dud\om.
\]
Since $I^\a[\phi]_0^\infty$ is finite, we can choose a sequence $v_n\rightarrow \infty$ such that
\[
\int_{\bar C(\tau_1, \tau_2, v_n)}\frac{|\bar\pa\phi|^2}{(1+r)^{1+\a}}r^2dud\om\leq M v_n^{-1}
\]
for some constant $M$. In particular, we have
\begin{align*}
\liminf\limits_{n\rightarrow \infty}(-\int_{\bar C(\tau_1, \tau_2, v_n)}i_{{J}^{\pa_t}[\phi]}d\vol)&=\liminf\limits_{n\rightarrow
\infty}\int_{\bar C(\tau_1, \tau_2, v_n)}\f12(|\pa_u\phi|^2+|\nabb\phi|^2)r^2dud\om=\f12 E^N[\phi]_{\tau_1}^{\tau_2}.
\end{align*}
Now from the energy identity \eqref{iden0} and all the above estimates, we can derive
\begin{align*}
\tilde{E}[\phi](\tau_2)+E^N[\phi]_{\tau_1}^{\tau_2}\les& E[\phi](\tau_1)+\delta_0
S^\a[\phi](\tau_i)+\delta_0I^\a[\phi]_{\tau_1}^{\tau_2}+\delta_0\int_{\tau_1}^{\tau_2}\tau_+^{-1-\a}E[\phi](\tau)d\tau\\
&+\delta_0\int_{\tau_1}^{\tau_2}\int_{r\leq R}\frac{|\pa\phi|^2}{(1+r)^{1+\a}}dxd\tau+\delta_0^{-1}D^{\a}[F]_{\tau_1}^{\tau_2}.
\end{align*}
Again $S^\a[\phi](\tau_i)=S^\a[\phi](\tau_1)+S^\a[\phi](\tau_2)$. Since either $\delta_0$ is small, using the integrated energy estimates \eqref{ILEE}, for sufficiently
small $\delta_0$, depending only on $\a$, we have
\begin{equation}
\label{energ1}
\begin{split}
\tilde{E}[\phi](\tau_2)\les& \tilde{E}[\phi](\tau_1)+\delta_0
S^\a[\phi](\tau_i)+\delta_0\int_{\tau_1}^{\tau_2}\tau_+^{-1-\a}E[\phi](\tau)d\tau+D^{\a}[F]_{\tau_1}^{\tau_2}.
\end{split}
\end{equation}
Now the problem is how to estimate the integral of the energy with negative weights in $\tau_+$, which can usually be bounded by using
Gronwall's inequality. However, due to the presence of $S^\a[\phi](\tau_2)$ on the right hand side, we are not able to use Gronwall's inequality
directly. Instead, let $\tau_2=\tau$ and then integrate the above energy inequality with respect to $\tau$ from $\tau_1$ to $\tau_2$. Use the
integrated energy inequality \eqref{ILEE} to bound the integral of $S^\a[\phi](\tau)$. We can show that
\begin{align*}
 \int_{\tau_1}^{\tau_2}\tau_+^{-1-\a}\tilde{E}[\phi](\tau)d\tau\les &\tilde{E}[\phi](\tau_1)+\delta_0 \tilde{E}[\phi](\tau_i)
 +\delta_0
 S^\a[\phi](\tau_i)+\delta_0\int_{\tau_1}^{\tau_2}\tau_+^{-1-\a}E[\phi](\tau)d\tau+D^{\a}[F]_{\tau_1}^{\tau_2}.
\end{align*}
For small $\delta_0$, the above estimate leads to
\[
\int_{\tau_1}^{\tau_2}\tau_+^{-1-a}\tilde{E}[\phi](\tau)d\tau\les \tilde{E}[\phi](\tau_1)+\delta_0 \tilde{E}[\phi](\tau_i)
 +\delta_0
 S^\a[\phi](\tau_i)+D^{\a}[F]_{\tau_1}^{\tau_2}.
\]
Then the energy estimate \eqref{eb} of Proposition \ref{ILEthm} follows from \eqref{energ1} if $\delta_0$ is sufficiently small, depending only
on $\a$, $\delta_0$. This energy estimate together with \eqref{ILEE} implies the integrated energy estimate \eqref{ILE0} of
Proposition \ref{ILEthm}. For the improved integrated energy estimate for the angular derivative of $\phi$, we note that we in fact have the
improved decay estimate \eqref{improvnabb}. We thus have shown the integrated energy estimate \eqref{ILE0} and the energy estimate \eqref{eb}.

Next we prove the boundedness of the
integrated energy \eqref{boundILE}. We do the energy estimate on the region bounded by $\Si_0$ and $\{t=t_1\}$, that is, as above we apply the
energy identity \eqref{energyeq} to such compact region with $X=\pa_t$, $\chi=0$. Similar to the above discussion, we can obtain
\[
E(t_1)\les C_0+\delta_0 D(t_1)+D^\a[F]_0^{t_1}.
\]
Here $E(t)$, $D(t)$, $C_0$ have been defined after line \eqref{boundILE0}. Thus by the estimate \eqref{boundILE0} there, we have
\[
\int_{0}^{t_1}\int_{\Si_\tau^{v(t_1)}}\frac{|\bar\pa\phi|^2}{(1+r)^{1+\a}}dxd\tau\les C_0+\delta_0 D(t_1)+D^\a[F]_0^{t_1}.
\]
Now to estimate $D(t_1)$, we do the energy estimate on the region bounded by $\Si_{\tau_1}$ and $\{t=t_1\}$ for $0\leq \tau_1\leq t_1$. We can
show that
\begin{align*}
\int_{S_{\tau_1}\cap \{t\leq t_1\}}|\overline{\pa_v}\phi|^2r^2dvd\om\les &E(t_1)+\delta_0\int_{\tau_1}^{t_1}\int_{S_\tau\cap\{t\leq t_1\}}
\tau_+^{-1-\a}|\overline{\pa_v}\phi|^2r^2dvd\om+\delta_0 D(t_1)\\
&+\delta_0\int_{S_{\tau_1}}\frac{|\pa\phi|^2}{(1+r)^{1+\a}}r^2dvd\om+C_0+D^\a[F]_0^{t_1}\\
&\les C_0+\delta_0 D(t_1)+\delta_0\int_{S_{\tau_1}}\frac{|\pa\phi|^2}{(1+r)^{1+\a}}r^2dvd\om+D^\a[F]_0^{t_1}.
\end{align*}
Multiply the above inequality by $(1+\tau_1)^{-1- a}$ and then integrated with respect to $\tau_1$ from $0$ to $t_1$. We can show that
\[
D(t_1)\les C_0+D^a[F]_0^{t_1}+\delta_0 D(t_1)+\delta_0(C_0+\delta_0 D(t_1)+D^\a[F]_0^{t_1}).
\]
Let $\delta_0$ to be sufficiently small. We conclude that
\[
D(t_1)\les C_0+D^\a[F]_0^{t_1}.
\]
This implies that
\[
\int_{0}^{t_1}\int_{\Si_\tau^{v(t_1)}}\frac{|\bar\pa\phi|^2}{(1+r)^{1+\a}}dxd\tau\les C_0+\delta_0 D(t_1)+D^\a[F]_0^{t_1}\les
C_0+D^\a[F]_0^{t_1}.
\]
Since the implicit constant is independent of $t_1$ and $C_0=\tilde{E}[\phi](0)+S^\a[\phi](0)$, we obtain \eqref{boundILE} by letting $t_1\rightarrow \infty$.
We thus finished the proof of Proposition \ref{ILEthm}.

\subsection{$p$-weighted Energy Inequality on Asymptotically Flat Spacetime}

In this section we establish the $p$-weighted energy inequalities on asymptotically flat spacetimes in a neighborhood of null infinity. We still consider solutions of the linear wave
equations \eqref{LWAVEEQ} on $(\mathbb{R}^{3+1}, g)$ with the metric $g$, the vector field $N$ satisfying the estimates \eqref{HH} and the following additional estimate
\begin{equation}
\label{HHimp} |\nabb h^{\Lb\Lb}|\leq\delta_0 (r_+^{-\frac{3}{2}-2\a}+r_+^{-1-\a}\tau_+^{-\f12-\f12\a}),\quad
r\geq R.
\end{equation}
We assume $\delta_0$ is sufficiently small, depending
only on $\a$ and $\a$ is a small positive constant. We have
\begin{prop}
\label{ILEthmpw} Let $\ep$, $\a_1$, $\a_2$ be positive constant such that
\[
 0<\ep< \frac{\a^2}{4}<\a<\frac{2\a+\a\ep}{2-\a}\leq\a_1<\a_2\leq \frac{7}{3}\a-\a_1-\ep.
\]
Let $\phi$ be the solution
of the linear wave equation \eqref{LWAVEEQ}. Assume $\phi$ satisfies the conditions in Lemma \ref{lem1} and $I^\ep[\phi]_0^\infty$ is finite. Then
\begin{itemize}
\item[(1)]: $p$-weighted energy inequality with weights $r^{1+\a_1}$
\begin{align}
\notag
&g^{1+\a_1}[\phi](\tau_2)+\int_{\tau_1}^{\tau_2}\int_{S_\tau}r^{\a_1}|\overline{\pa_v}\psi|^2 dvd\om d\tau \les g^{1+\a_1}[\phi](\tau_1)+\int_{\tau_1}^{\tau_2}\tau_+^{\ep}D^{\a_1}[F]_{\tau}^{\tau_2}d\tau\\
\label{pwe1a}
&+R^{1+\a_1}((\tau_1)_+^{1-\a}\tilde{E}[\phi](\tau_1)+\delta_0(\tau_i)_+^{1-\a}S^\ep[\phi](\tau_i)+(\tau_1)_+^{1+\ep} D^{\a_1}[F]_{\tau_1}^{\tau_2}),
\end{align}
\item[(2)]: $p$-weighted energy inequality with weights $r$
\begin{align}
\notag
&g^{1}[\phi](\tau_2)+\int_{\tau_1}^{\tau_2}\tilde{E}[\phi](\tau)d\tau\les g^1[\phi](\tau_1)+\int_{\tau_1}^{\tau_2}D^{\a_1}[F]_{\tau}^{\tau_2}d\tau\\
\label{pwe1}
&+R^{1+\ep}((\tau_1)_+^{1-\a}\tilde{E}[\phi](\tau_1)+(\tau_1)_+D^{\a_1}[F]_{\tau_1}^{\tau_2}+\delta_0(\tau_i)_+^{1-\a}S^\ep[\phi](\tau_i)),
\end{align}
\end{itemize}
where $S^\ep[\phi](\tau_i)=S^\ep[\phi](\tau_1)+S^\ep[\phi](\tau_2)$. The implicit constants also depend on $\ep$, $\a_1$, $\a_2$. The notations are defined in the end of Section 2.
\end{prop}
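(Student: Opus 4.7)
The plan is to apply the vector field method with the modified multiplier
\[
X = r^{p}\bigl(-2\partial^{\underline{L}} + g^{\underline{L}\underline{L}}\underline{L}\bigr),
\]
which reduces to $r^p L$ in the Minkowski case but is adapted to the asymptotically flat $g$. Taking $p = 1+\alpha_1$ will yield \eqref{pwe1a} and $p = 1$ will yield \eqref{pwe1}. I would apply the energy identity \eqref{energyeq} on the region bounded by $\Sigma_{\tau_1}\cap\{r\geq R\}$, $\Sigma_{\tau_2}\cap\{r\geq R\}$, the incoming cone $\bar C(\tau_1,\tau_2,v)$, and the inner timelike cylinder $\{r=R,\ \tau_1\leq\tau\leq\tau_2\}$, then pass to the limit $v\to\infty$ along the subsequence furnished by the a priori finiteness of $I^{\ep}[\phi]_0^\infty$.

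The first step is to compute $K^X[\phi]$, together with the modification coming from the substitution $\psi = r\phi$. In the flat model, the dominant bulk contribution is $\tfrac{p}{2} r^{p-1}|\overline{\partial_v}\psi|^2$, which is precisely the nonnegative integrand on the left of \eqref{pwe1a}. The correction $g^{\underline{L}\underline{L}}\underline{L}$ in $X$ is designed so that its deformation tensor cancels the leading-order error arising from the component $h^{\underline{L}\underline{L}}\underline{L}\phi\cdot\underline{L}\phi$; what remains depends only on $\overline{\partial_v}h^{\mu\nu}$, $h^{\underline{L}\underline{L}}$, and the improved angular estimate \eqref{HHimp} on $\nabb h^{\underline{L}\underline{L}}$.

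The second step is to control the error terms. Any error containing at least one good derivative $\overline{\partial_v}\phi$ is split by Cauchy--Schwarz into a small piece absorbed into the bulk $r^{p-1}|\overline{\partial_v}\psi|^2$ and a piece of the form $\delta_0\tau_+^{-1-\alpha}|\overline{\partial_v}\phi|^2$. The only structurally bad quadratic form is $\underline{L}\phi\cdot\underline{L}\phi$ paired with $h^{\underline{L}\underline{L}}$ and its tangential derivatives; by \eqref{HH} and \eqref{HHimp} this contributes $\lesssim \delta_0 r^{-1-2\alpha}|\partial\phi|^2$, which against the weight $r^{p-1}$ is dominated by $\delta_0 I^{\alpha_1}[\phi]_{\tau_1}^{\tau_2}$ and then absorbed using Proposition \ref{ILEthm}, provided $p-1+1+\alpha_1\leq 1+2\alpha$, i.e.\ $\alpha_1$ in the prescribed range. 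The inhomogeneous term $F$ is absorbed by Cauchy--Schwarz into $r^{1+\alpha_1}|F|^2$, producing $D^{\alpha_1}[F]$. Boundary contributions on $S_{\tau_i}$ reduce via \eqref{curStau}, up to absorbable errors, to $r^p|\partial_v\psi|^2$, giving $g^{p}[\phi](\tau_i)$; on $\{r=R\}$ they are majorized by $R^{1+\alpha_1}$ times the $t$-constant energy density, which, inserted into Proposition \ref{ILEthm} integrated in $\tau$ via Lemma \ref{lweightILE}, yields the factor $R^{1+\alpha_1}\bigl((\tau_1)_+^{1-\alpha}\tilde E[\phi](\tau_1)+\delta_0(\tau_i)_+^{1-\alpha}S^{\ep}[\phi](\tau_i)+(\tau_1)_+^{1+\ep}D^{\alpha_1}[F]_{\tau_1}^{\tau_2}\bigr)$. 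At null infinity the boundary term is nonnegative and is dropped. To pass from the case $p=1+\alpha_1$ to the case $p=1$ in \eqref{pwe1}, the bulk $\int|\overline{\partial_v}\psi|^2$ is converted into $\int\tilde E[\phi](\tau)d\tau$ by Lemma \ref{lempphi2} applied with exponents $\alpha_1,\alpha_2$, which is also required to absorb the $\phi^2$ that appears when translating $\psi$-estimates back into $\phi$-estimates.

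The main obstacle is the weight bookkeeping. The tight range $\tfrac{2\alpha+\alpha\ep}{2-\alpha}\leq\alpha_1<\alpha_2\leq\tfrac{7}{3}\alpha-\alpha_1-\ep$ must emerge precisely from balancing (i) the $\delta_0 r^{-1-2\alpha}|\partial\phi|^2$ error against the positive bulk $r^{\alpha_1}|\overline{\partial_v}\psi|^2$; (ii) the time-weighted error $\delta_0\tau_+^{-1-\alpha}|\overline{\partial_v}\phi|^2$ whose $\tau$-integral forces the loss $\tau_+^{\ep}$ multiplying $D^{\alpha_1}[F]$; and (iii) the Hardy-type conversion of Lemma \ref{lempphi2} needed to turn the $\psi$-bulk into an energy flux for the $p=1$ case. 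Ensuring that all these constraints are simultaneously met, while tracking the factors $R^{1+\alpha_1}$ and $(\tau_i)_+^{1-\alpha}$ consistently through the chain of estimates, is the delicate part.
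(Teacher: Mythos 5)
Your strategy is the paper's own: the multiplier $r^{p}(-2\pa^{\Lb}+g^{\Lb\Lb}\Lb)$ with the modification $\chi=r^{-1}f$, the energy identity on the region bounded by $S_{\tau_1}$, $S_{\tau_2}$ and $\{r=R\}$, boundary terms on $S_{\tau_i}$ reducing to $g^{p}[\phi](\tau_i)$, the $\{r=R\}$ term handled via the $p=0$ case, and Lemmas \ref{lempphi2} and \ref{lweightILE} doing exactly the jobs you assign them. (The paper truncates with a compactly supported cutoff $\kappa_{M}$ and lets $M\to\infty$ rather than truncating by an incoming cone, but that is cosmetic.)

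Two points in your sketch would not survive being written out. First, your absorption of the bad $\Lb\phi\cdot\Lb\phi$ error is mis-accounted: you bound its coefficient by a single power $\delta_0 r^{-1-2\a}$ and then require $p-1+1+\a_1\leq 1+2\a$, which for $p=1+\a_1$ forces $\a_1\leq\a$ and contradicts the hypothesis $\a_1\geq\frac{2\a+\a\ep}{2-\a}>\a$. The point of the correction $g^{\Lb\Lb}\Lb$ in $X$ (which you correctly identify as a cancellation device) is that after the cancellation the surviving coefficient of $\Lb(\phi)\Lb(\phi)$ is $|\tfrac12 g^{\Lb\Lb}\Lb(g^{\Lb\Lb})|\les \bar H^{2}\sim\delta_0^{2}r^{-2-4\a}$, i.e.\ \emph{quadratic} in $\bar H$; only with this extra power of $r^{-1-2\a}$ does the term fall below $r^{-1-\ep}|\bar\pa\psi|^2$ for all $p\leq 1+\a_1$. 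The actual constraints tying $\a_1,\a_2$ to $\a$ come instead from the mixed term $r^{p-1-2\a}|\bar\pa\psi|(|\phi|+|L\psi|)$ and from Lemma \ref{lempphi2} applied to $r^{p-\a_2}\phi^{2}$. Second, after all absorptions one is left with $\delta_0\,G^{p,1+\ep}[\phi]_{\tau_1}^{\tau_2}$ (and $\delta_0\int\tau_+^{-1-\a}\tilde E\,d\tau$) on the right, and Gronwall is unavailable because the right-hand side also carries $S^{\ep}[\phi](\tau_2)$, which depends on the upper limit; your sketch does not address this. The paper closes it by setting $\tau_2=\tau$, multiplying by $\tau_+^{-1-\ep}$ and integrating so that the same quantity reappears on the left with a favorable constant, and it must establish the $p=1$ inequality first in order to know $G^{\a_1,0}[\phi]_{\tau_1}^{\tau_2}<\infty$ before running the argument at $p=1+\a_1$.
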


\begin{remark}
$\ep$ is much smaller than $\a$ and can be taken to be, for example
$\ep=\frac{\a}{10000}$. This small constant will appear in the integrated local energy estimate \eqref{ILE0} (since $\ep$ is smaller than $\a$, the estimate
\eqref{ILE0} also holds for $\ep$). $p=1+\a_1$ is the maximal $p$ we can take in the $p$-weighted energy inequality.

\end{remark}

In \cite{yang1}, \cite{yang2}, the $p$-weighted energy inequalities on flat (in a neighborhood of null infinity) spacetimes  are established by multiplying the
equation in null coordinates with $r^p\pa_v(r\phi)$ and then
integrating by parts. On asymptotically flat spacetimes, a more robust way, as also mentioned in \cite{newapp}, to prove the $p$-weighted energy
inequalities is to use the vector field method. When the metric is flat, we can alternatively derive the $p$-weighted energy inequalities
by using the vector field $r^p\pa_v$ as multipliers. For the general metrics with very weak ($\a$ is small) decay properties, we need to construct the corresponding vector fields as multipliers.

\bigskip

We use the vector field method to establish the above two $p$-weighted energy inequalities. Let $f$ be a smooth compactly supported nonnegative
function of $r$ defined on $[R, \infty)$. Choose the corresponding vector field as follows
\begin{equation}
\label{defofX}
 X=fY=f(-2g^{\Lb A}A+ g^{\Lb\Lb}\Lb)=f(-2\pa^{\Lb}+ g^{\Lb\Lb}\Lb),
\end{equation}
where $A$ runs over the null frame $\{\Lb, L, S_1, S_2\}$.

Although the null frame is merely defined locally and depends on the choice of $S_1$ and $S_2$, the above $X$ is in fact a well defined vector
field when $r\geq R$. Notice that the vector $g^{\Lb A}A$ can be viewed as the unique vector field which is orthogonal to the hypersurface
$S_\tau$ such that the inner product with $\Lb$ relative to the metric $g$ is 1. Since $\Lb$ is a global well defined vector field when $r\geq
R$, we conclude that $X$ is also a well defined vector field on $\{r\geq R\}$.

We rely on the energy identity \eqref{energyeq}. We choose the vector field $X$ as above \eqref{defofX}. Let the function $\chi=r^{-1}f$. The integral region $\mathcal{D}$ is
bounded by $S_{\tau_1}$, $S_{\tau_2}$ and
\[
 C_R=\{r=R, \tau_1\leq t\leq \tau_2\}.
\]
Since $f$ has compact support, the energy identity \eqref{energyeq} implies that
\begin{align}
\notag&\int_{{S}_{\tau_1}}i_{\tilde{J}^X[\phi]}d\vol -
\int_{{S}_{\tau_2}}i_{\tilde{J}^X[\phi]}d\vol-\int_{C_R}i_{\tilde{J}^X[\phi]}d\vol\\
\label{pweeq0}
&=\int_{\tau_1}^{\tau_2}\int_{S_\tau}(F-N(\phi))( X(\phi)+\chi\phi)+ K^X[\phi] +\chi\pa^\ga\phi\pa_\ga\phi -\f12 \Box_g\chi \phi^2 d\vol.
\end{align}
In this subsection, we define two functions on $S_\tau$
\[
 H=\delta_0\tau_+^{-\f12-\f12\a}r_+^{-\f12-2\a}, \quad \bar H=\delta_0 r_+^{-1-2\a}.
\]
We now estimate term by term in the above energy identity \eqref{pweeq0}. First for the linear term $N(\phi)(X(\phi)+\chi\phi)$, note that
\[
 |N(\phi)|\les \bar H |\pa\phi|+H|\overline{\pa_v}\phi|.
\]
We can write
\[
 X(\phi)+\chi\phi=\chi L(\psi)-2fh^{\Lb A}A(\phi)+fg^{\Lb\Lb}\Lb(\phi),\quad \psi=r\phi.
\]
Therefore we have
\begin{equation}
 \label{lineartest}
|N(\phi)(X\phi+\chi\phi)|\les (\bar H|\pa\phi|+H|\overline{\pa_v}\phi|)(\chi |L(\psi)|+f\bar H|\pa\phi|+f H|\overline{\pa_v}\phi|).
\end{equation}
Next we estimate the main term $K^X[\phi]$. Relative to the null frame $\{L,
\Lb, S_1, S_2\}$, we can calculate the deformation tensor of the
vector field $X$
\[
 \pi^X_{AB}=\f12(X(g_{AB})+g([A, X], B)+g([B, X], A)),
\]
where $[A, X]$ denotes the commutator of the two vector fields $A$, $X$. We remark here that $\pi^X_{AB}$ is defined
locally. However, the current $K^X[\phi]$ is independent of the choice of local coordinates. We thus can compute it relative to the null frame $\{L, \Lb, S_1, S_2\}$. We can compute
\begin{equation}
\label{KXdecomp}
\begin{split}
&K^X[\phi]+\chi\pa^\ga\phi\pa_\ga\phi=-\f12 X(g^{AB})A(\phi)B(\phi)+X^C[A, C](\phi)\pa^A\phi\\
&\qquad+f\pa^AY^C C(\phi)A(\phi)
+\pa^A f A(\phi)Y(\phi)-(\f12 div(X)-\chi)\pa^{\ga}\phi\pa_{\ga}\phi,
\end{split}
\end{equation}
where $div (X)$ is the divergence of the vector field $X$ with respect to the metric $g$ (also see the definition below). Since the metric
is asymptotically flat, we decompose the above expression according to the metric decomposition $g=h+m_0$.

We first consider $div(X)$. Recall that $X=fY$. We have
\[
div(X)=Y(f)+f div(Y).
\]
Recall that (see Section 2.3) at a fixed
point we can require that $[S_1, S_2]=0$. We thus can compute
\begin{align*}
\f12 div(Y)&=\f12 A(Y^A)+\frac{1}{4} Y(g_{AB})g^{AB}+\f12 Y^C g([A, C], B)g^{AB}\\
&= \f12 A(Y^A)-\frac{1}{4} Y(g^{AB})g_{AB}+r^{-1}(Y^L-Y^{\Lb})\\
&=r^{-1}-2r^{-1}h^{L\Lb}+r^{-1}g^{\Lb\Lb}-L(g^{L\Lb})-S(g^{S\Lb})-\f12 \Lb(g^{\Lb\Lb}),
\end{align*}
where $h^{AB}=g^{AB}-m_{0}^{AB}$, $S\in\{S_1, S_2\}$. Note that
\[
 Y(f)=-2m_0^{L\Lb} L(f)-2h^{\Lb A}A(f)+g^{\Lb\Lb}\Lb(f)=f'-2h^{\Lb A}A(f)+g^{\Lb\Lb}\Lb(f).
\]
Using the estimates \eqref{nullforest} to control $\pa^\ga\phi\pa_\ga\phi$, we then can
write the last term in the expression \eqref{KXdecomp} as
\begin{equation}
\label{divXde}
\begin{split}
 &(\f12 div(X)-\chi)\pa^\ga\phi\pa_\ga\phi=\f12 f'(-L(\phi)\Lb(\phi)+|\nabb\phi|^2)+Er_1,\\
 &|Er_1|\les (\chi+|f'|)\left(H|\overline{\pa_v\phi}||\pa\phi|+\bar H |\pa\phi|^2\right)
+\bar H f(|L(\phi)||\pa\phi|+|\nabb\phi|^2),
\end{split}
\end{equation}
where recall that $\chi=r^{-1}f$.

Similarly, we can write
\begin{equation}
\label{ACcom}
\begin{split}
 X^C [A, C](\phi)\pa^A\phi&=\chi(1-2h^{L\Lb}+h^{\Lb\Lb})S(\phi)\pa^S\phi+\chi h^{\Lb S} S(\phi)(\pa^{\Lb}\phi-\pa^L\phi)\\
&=\chi |\nabb\phi|^2+Er_2,\quad |Er_2|\les
\chi H|\nabb\phi||\pa\phi|.
\end{split}
\end{equation}
Next for $\pa^A f A(f)Y(\phi)$, recall that $f$ is a function of
$r$. We have
\begin{align*}
 &\pa^A f A(\phi)=f'(-\f12 \Lb (\phi)+\f12 L(\phi)+h^{LB}B(\phi)-h^{\Lb B}B(\phi)),\\
&Y(\phi)=-2\pa^{\Lb}\phi+g^{\Lb\Lb}\Lb(\phi)=L(\phi)-2h^{\Lb
B}B(\phi)+h^{\Lb\Lb}\Lb(\phi).
\end{align*}
Since
\[
 |-2h^{\Lb B}B(\phi)+h^{\Lb\Lb}\Lb(\phi)|\les \bar H|\pa\phi|+H|\overline{\pa_v}\phi|,
\]
we can write
\begin{equation}
 \label{AfY}
\begin{split}
\pa^A f A(\phi)Y(\phi)&=\f12 f'(L(\phi)-\Lb(\phi))L(\phi)+Er_3,\\
|Er_3|&\les H |f'||\pa\phi||\overline{\pa_v}\phi|+|f'|\bar H |\pa\phi|^2.
\end{split}
\end{equation}
We finally estimate the main error term in \eqref{pweeq0}
\[
-\f12 X(g^{AB})A(\phi)B(\phi)+f\pa^A Y^C C(\phi)A(\phi).
\]
Since this term is linear in $f$, it suffices to
consider the quadratic form
\[(-\f12 Y(g^{AB})+\pa^A Y^B)A(\phi)B(\phi),\quad Y=-2\pa^{\Lb}+g^{\Lb\Lb}\Lb.\]
of $\Lb(\phi)$,
$L(\phi)$, $S(\phi)$. The
coefficient of $\Lb(\phi)\Lb(\phi)$ satisfies
\[
\left|-\f12
(-2\pa^{\Lb}+g^{\Lb\Lb}\Lb)(g^{\Lb\Lb})-\pa^{\Lb}g^{\Lb\Lb}\right|=\left|-\f12
g^{\Lb\Lb}\Lb(g^{\Lb\Lb})\right|\les \bar H^2.
\]
Using the improved decay assumption
\eqref{HHimp} on $\nabb h^{\Lb\Lb}$, we can bound the coefficients for $\Lb(\phi)S(\phi)$, $S\in\{S_1, S_2\}$ as follows
\[
|(2\pa^{\Lb}-g^{\Lb\Lb}\Lb)(g^{\Lb S})-2\pa^{\Lb}g^{\Lb S}-\pa^S
g^{\Lb\Lb}|=|g^{\Lb\Lb}\Lb(g^{\Lb S})+\pa^S(g^{\Lb\Lb})|\les r^{-\f12}(H+\bar H).
\]
Similarly, the coefficient for $L(\phi)\Lb(\phi)$ can be bounded by $C_0\bar H$.
For $A(\phi)B(\phi)$, $A$, $B\in \{S_1, S_2\}$, we rely on the better decay in $r$ of $\overline{\pa_v} g$ to control the coefficient
\[
 |\pa^{\Lb}g^{AB}-\f12 g^{\Lb\Lb}\Lb(g^{AB})-\pa^A g^{\Lb B}-\pa^B g^{\Lb A}|\les \bar H+H^2\les \bar H.
\]
Finally for $L(\phi)\overline{\pa_v}\phi$, the coefficient can simply be bounded by $C_0 H$ for some constant $C_0$ depending only on $\a$. Summarizing, we have shown
\begin{align*}
&\left|-\f12 X(g^{AB})A(\phi)B(\phi)+f\pa^A Y^C C(\phi)A(\phi)\right|\les f\left(\bar H^2|\Lb(\phi)|^2+\bar
H|\Lb(\phi)L(\phi)|\right.\\&\qquad\qquad\left.+ (\bar H+H) r^{-\f12}|\Lb(\phi)||\nabb\phi|+\bar
H|\nabb\phi|^2+H|L(\phi)||\overline{\pa_v}\phi|\right).
\end{align*}
Combine this estimate with estimates \eqref{divXde},
\eqref{ACcom}, \eqref{AfY}. The equation \eqref{KXdecomp} gives the estimate for $K^X[\phi]+\chi \pa^\ga\phi\pa_\ga\phi$. Then from \eqref{pweeq0} and \eqref{lineartest},
we derive
\begin{align}
\label{pMenergy} &\int_{{S}_{\tau_1}}i_{\tilde{J}^X[\phi]}d\vol -
\int_{{S}_{\tau_2}}i_{\tilde{J}^X[\phi]}d\vol-\int_{C_R}i_{\tilde{J}^X[\phi]}d\vol\\
\notag &=\int_{\tau_1}^{\tau_2}\int_{S_\tau} F(
X(\phi)+\chi\phi)
 + \f12 f'|L(\phi)|^2+(\chi-\f12 f')|\nabb\phi|^2-\f12 \Delta\chi \cdot\phi^2 +E(X)d\vol,
\end{align}
where the error term $E(X)$ can be bounded as follows
\begin{align*}
 |E(X)| \les & (\chi+|f'|)\bar H |\pa\phi|^2+(|f'|H+fr^{-\f12}(\bar H+H) )|\pa\phi||\overline{\pa_v}\phi|+\chi\bar H|L\psi||\pa\phi|\\
&+fH^2|\overline{\pa_v}\phi|^2+f\bar H(|L(\phi)||\pa\phi|+|\nabb\phi|^2)
+ fH |L(\phi)||\overline{\pa_v}\phi|\\&+|(\Box_g-\Delta)\chi|
\phi^2+\chi H |L\psi||\overline{\pa_v}\phi|,
\end{align*}
where the Laplacian $\Delta$ is with respect to the flat metric on
$\mathbb{R}^3$. We further estimate $E(X)$ by choosing the function
$f$ explicitly. Let $\kappa$ be a smooth positive cutoff function on
$[0, \infty)$ such that
\[
 \kappa(x)=1,\quad  x\leq 1; \quad \kappa(x)=0, \quad x\geq 2;\quad |\kappa'(x)|\leq 2.
\]
Let $M$ be a large constant and then let $f=r^p\kappa_M=r^p
\kappa(\frac{|x|-R}{M})$. Recall that $\chi=r^{-1}f$. We can show
that
\[
|(\Box_g-\Delta)\chi|\les H r^{p-2},\quad (t, x)\in S_\tau.
\]
To estimate the boundary term on $C_R$, we can take $p=0$ in the
$p$-weighted energy inequality. Hence we also need to estimate the error term $E(X)$ when $p=0$. For this case we have
\[
|f|\leq 1, \quad |f'|=|\frac{\kappa'(r-R)}{M}|\les r^{-1}, \quad |\chi|\leq r^{-1} .
 \]
 And we can estimate the error term $E(X)$ as follows
\begin{equation}
 \label{errEx0}
\begin{split}
|E(X)|\les \delta_0(r^{-1-\a}|\bar\pa\phi|^2+\tau_+^{-1-\a}|\overline{\pa_v}\phi|^2+\tau_+^{-1-\a}r^{-2}\phi^2).
\end{split}
\end{equation}

We now estimate the error term $E(X)$ for general $p\in[0, 1+\a_1]$. Relative to $\psi=r\phi$, from the above estimate for $E(X)$, we can estimate
$r^2 E(X)$ as follows
\begin{equation*}
\begin{split}
r^2|E(X)|&\les \delta_0 r^{-1-\ep}|\bar\pa\psi|^2+r^{p-\f12}(
\bar H+H)(|\bar\pa\psi||\overline{\pa_v}\psi|+|\bar\pa\psi||\phi|)+fH^2|\overline{\pa_v}\psi|^2+fH\phi^2\\
&\quad+f\bar H(|L\psi||\bar\pa
\psi|+|\nabb\psi|^2+|\bar\pa\psi||\phi|)+fH(|L\psi||\overline{\pa_v}\psi|+|\phi||\overline{\pa_v}\psi|)\\
&\les \delta_0(r^{-1-\ep}|\bar\pa\psi|^2+r^{p-1}|\overline{\pa_v}\psi|^2+r^{p-1-2\a}|L(\psi)||\overline\pa\psi|+r^{p}\tau_+^{-1-\a}|L\psi|^2\\
&\quad +r^{p-1-2\a}|\phi||\bar\pa\psi| +r^{p-\a_2}\tau_+^{-1-\a}|\phi|^2),
\end{split}
\end{equation*}
where $\bar \pa\psi=(\pa\psi, \frac{\psi}{1+r})$. Here we have used the assumption \eqref{HH} and Cauchy-Schwartz inequality to obtain the above
estimates.

We need to further control
$r^{p-1-2\a}|\bar\pa\psi|(|\phi|+|L\psi|)$. Note that
\[
p\leq 1+\a_1, \quad \a_1+\a_2+\ep<2\a+\frac{1}{3}\a.
\]
Using Jensen's inequality, we can show that for all $p\in[0, 1+\a_1]$
\begin{align*}
 r^{p-1-2\a}|\bar\pa\psi|(|\phi|+|L\psi|)&\les r_+^{-1-\ep}\tau_+^{1-\a}|\bar\pa\psi|^2+r^{2p-1-4\a+\ep}\tau_+^{-1+\a}(|L\psi|^2+|\phi|^2)\\
&\les r_+^{-1-\ep}\tau_+^{1-\a}|\bar\pa\psi|^2+\left(1+\tau_+^{-1-\f12\a}r^{p-\a_2} \right)(|L\psi|^2+|\phi|^2).
\end{align*}
In particular, we can estimate $r^2E(X)$ as follows
\begin{equation*}
\begin{split}
r^2|E(X)|&\les \delta_0 (\tau_+^{1-\a}
r^{-1-\ep}|\bar\pa\psi|^2+r^{p-1}|\overline{\pa_v}\psi|^2+r^{p}\tau_+^{-1-\f12\a}|L\psi|^2\\
&\quad +|L\psi|^2+r^{p-\a_2}\tau_+^{-1-\f12\a}|\phi|^2+|\phi|^2),
\end{split}
\end{equation*}
As $\delta_0$ is assumed to be small, the integral of the second term will be absorbed. The integral of all the other terms except the first one
will be bounded by using Gronwall's inequality. The first term can not be bounded by using the integrated energy inequality directly due to the positive weights in $\tau_+$. However
since the integrated energy is expected to decay like $(1+\tau_1)^{-1-\a}$, we can use Lemma \ref{lweightILE} to estimate it.
In Lemma \ref{lweightILE}, take
\[
f(\tau)=\int_{S_\tau}(1+r)^{-1-\ep}|\bar\pa\psi|^2dvd\om.
\]
Using the integrated energy inequality \eqref{ILE0} (also holds for $\a=\ep$), we can show that
\begin{equation*}
 \begin{split}
 &\int_{\tau_1}^{\tau_2}\int_{S_\tau}\tau_+^{1-\a}r_+^{-1-\ep}r^{-2}|\bar\pa\psi|^2d\vol\les
\int_{\tau_1}^{\tau_2}\int_{S_\tau}\tau_+^{1-\a}\frac{|\bar\pa\phi|^2}{(1+r)^{1+\ep}}dx d\tau\\
 &\les\tilde{E}^\a[\phi]_{\tau_1}^{\tau_2}+(\tau_1)_+^{1-\a}\tilde{E}[\phi](\tau_1)+
(\tau_1)_+^{1-\a}D^{\ep}[F]_{\tau_1}^{\tau_2}\\
&\quad+\int_{\tau_1}^{\tau_2}\tau_+^{-\a}D^{\ep}[F]_{\tau}^{\tau_2}d\tau+\delta_0(\tau_i)_+^{1-\a} S^\ep[\phi](\tau_i).
 \end{split}
\end{equation*}
Now in the above estimate for $r^2E(X)$, we use Lemma \ref{lempphi2} to control the integral of $r^{p-\a_2}\phi^2$ and use
Lemma \ref{lem2} to control the integral of $\phi^2$. We end up with
\begin{align}
 \notag
&\qquad|\int_{\tau_1}^{\tau_2}\int_{S_\tau}r^2 E(X)d\vol|\\
 \notag
&\les \delta_0\int_{\tau_1}^{\tau_2}\int_{S_\tau}r^{p-1}|\overline{\pa_v}\psi|^2
+(1+r^{p}\tau_+^{-1-\f12\a})|L\psi|^2+(1+r^{p-\a_2}\tau_+^{-1-\f12\a})|\phi|^2dvd\om d\tau\\
 \notag
&\qquad+\delta_0\int_{\tau_1}^{\tau_2}\int_{S_\tau}\tau_+^{1-\a}r_+^{-1-\ep}r^{-2}|\bar\pa\psi|^2d\vol\\
 \label{Exeasy}
&\les \delta_0 (\int_{\tau_1}^{\tau_2}\int_{S_\tau}r^{p-1}|\overline{\pa_v}\psi|^2dvd\om d\tau+G^{p, 1+\f12\a}[\phi]_{\tau_1}^{\tau_2}
+R^{p-\a_2}\tilde{E}^{1+\frac{\a}{2}}[\phi]_{\tau_1}^{\tau_2}+
\tilde{E}^{0}[\phi]_{\tau_1}^{\tau_2}\\
 \notag
&+(\tau_1)_+^{1-\a}(\tilde{E}[\phi](\tau_1)+D^{\ep}[F]_{\tau_1}^{\tau_2})+\int_{\tau_1}^{\tau_2}\tau_+^{-\a}D^{\ep}[F]_{\tau}^{\tau_2}d\tau
+(\tau_i)_+^{1-\a} S^\ep[\phi](\tau_i)).
\end{align}
This gives the estimate for the error term $E(X)$ in the above energy identity \eqref{pMenergy}. We use a similar idea to treat the inhomogeneous term
$F(X(\phi)+\chi\phi)$. Note that
\[
 |X(\phi)+\chi\phi|=|fL(\phi)+\chi\phi-2h^{\Lb A}A(\phi)+g^{\Lb\Lb}\Lb(\phi)|\les \chi |L\psi|+f\bar H|\pa\phi|+ fH|\overline{\pa_v}\phi|.
\]
For $p\leq 1+\a_1<1+2\a$, we can estimate
\[
 |Ff(\bar H |\pa\phi|+H|\overline{\pa_v}\phi|)|\les\delta_0 ((1+r)^{1+\ep}|F|^2+(1+r)^{-1-\ep}|\pa\phi|^2+|\overline{\pa_v}\phi|^2).
\]
For the main term $|F|\chi|L\psi|$, we use Cauchy-Schwartz's inequality to show that
\begin{align*}
 r^2 |F|\chi|L\psi|&\les \delta_0^{-1}\tau_+^{1+(p-1)\a_1^{-1}\ep}|F|^2r^{3+\a_1}+\delta_0|L\psi|^2r^{2p-1-\a_1}\tau_+^{-1-(p-1)\a_1^{-1}\ep}\\
&\les \delta_0^{-1}\tau_+^{1+(p-1)\a_1^{-1}\ep}|F|^2r^{3+\a_1}+\delta_0|L\psi|^2(r^p \tau_+^{-1-\ep}+1),\quad p=1 \textnormal{ or } 1+\a_1.
\end{align*}
Similarly, we use Lemma \ref{lweightILE} to estimate the first term on the right hand side of the above inequality. And we can show that
\begin{align}
\label{pWeF}
 |\int_{\tau_1}^{\tau_2}\int_{S_\tau}F(X(\phi)+\chi\phi)& d\vol|\les \delta_0(\int_{\tau_1}^{\tau_2}\tilde{E}[\phi](\tau)d\tau+ G^{p, 1+\ep}[\phi]_{\tau_1}^{\tau_2}+
I^\ep[\phi]_{\tau_1}^{\tau_2})\\
\notag
& +\int_{\tau_1}^{\tau_2}\tau_+^{(p-1)\a_1^{-1}\ep} D^{\a_1}[F]_{\tau}^{\tau_2}d\tau+(\tau_1)_+^{(p-1)\a_1^{-1}\ep}D^{\a_1} [F]_{\tau_1}^{\tau_2}.
\end{align}

\bigskip

Next we estimate the boundary terms in the energy identity \eqref{pMenergy}.
\begin{lem}
\label{lempbdest}
 Let $X$ be the vector field defined in line \eqref{defofX} on the region $\{r\geq
R\}$. Let $f=r^p \kappa_M(x)=r^p \kappa(\frac{|x|-R}{M})$ for the
cutoff function $\kappa$. Let
$\chi=r^{-1}f$. Then we have
\begin{align*}
&\left|\int_{S_\tau}i_{\tilde{J}^X[\phi]}d\vol-\int_{S_\tau}f(\pa_v\psi)^2\sqrt{-G}dvd\om\right|\les\delta_0(S^\ep[\phi](\tau)+R^{p-\a_2}
\tilde{E}[\phi](\tau)+g^p[\phi](\tau))
\end{align*}
for all $p\in[1+\a_1]$. For the special case when $p=0$, we have
\begin{align*}
 \left|\int_{S_\tau}i_{\tilde{J}^X[\phi]}d\vol\right|\les \tilde{E}[\phi](\tau)+\delta_0S^\ep[\phi](\tau).
\end{align*}
Here the implicit constants are independent of $M$.
\end{lem}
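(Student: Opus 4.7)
The plan is to apply the boundary formula \eqref{curStau} with the specific choices $X=f(-2\pa^{\Lb}+g^{\Lb\Lb}\Lb)$ and $\chi=r^{-1}f$, and then decompose the integrand systematically according to $g=m_0+h$. I would split $i_{\tilde{J}^X[\phi]}d\vol$ into a ``Minkowski part'' (what one obtains by setting $h=0$ in every inverse-metric coefficient) and an ``$h$-error part''. The expectation is that after an integration by parts in $v$ on $S_\tau$, the Minkowski part reduces to $\int_{S_\tau}f(\pa_v\psi)^2\sqrt{-G}dvd\om$ plus a single boundary contribution at $r=R$, while the $h$-error part inherits a factor of $\delta_0$ from \eqref{HH} and \eqref{HHimp}.

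For the Minkowski piece I would use $m_0^{\Lb L}=-\f12$ and $m_0^{\Lb\Lb}=m_0^{\Lb S}=0$ to see $\pa^{\Lb}\phi|_{h=0}=-\f12 L\phi$, $X|_{h=0}=fL$, and $X^{\Lb}|_{h=0}=0$. Substituting into \eqref{curStau}, the Minkowski integrand becomes
\[
\bigl[r^{2}f(L\phi)^{2}+rfL\phi\cdot\phi-\tfrac12(rf'-f)\phi^{2}\bigr]dvd\om.
\]
Using $\psi=r\phi$ and $L(r)=1$ one rewrites $r^{2}(L\phi)^{2}+rL\phi\cdot\phi=(L\psi)^{2}-(\psi/r)L\psi$, and then integration by parts in $v$ on $S_\tau$ (with the cutoff $\kappa_M$ killing the contribution at $v=\infty$) shows that the $\psi L\psi$ and the $\phi^{2}$ bulk terms precisely cancel modulo $\kappa_M'/M$ pieces on the transition annulus, leaving $\int f(L\psi)^{2}dvd\om$ plus a boundary term of size $R^{p+1}\int_{\om}\phi^{2}(\tau,R,\om)d\om$ at $v=v_\tau$. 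Lemma \ref{lem1} bounds this boundary by $R^{p}\tilde{E}[\phi](\tau)$, and Lemma \ref{lempphi2} then sharpens it to the $R^{p-\a_{2}}\tilde{E}[\phi](\tau)+g^{p}[\phi](\tau)$ combination appearing in the statement.

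The $h$-error part collects every place where an inverse-metric coefficient was replaced by its Minkowski value. By \eqref{HH} a $\Lb$-derivative on $h$ (or an $h^{\Lb\Lb}$ factor) carries $\bar H=\delta_{0}r_{+}^{-1-2\a}$, while a good $\overline{\pa_v}$-derivative on $h$ carries the stronger $H=\delta_{0}\tau_{+}^{-1/2-\a/2}r_{+}^{-1/2-2\a}$. Each resulting term is schematically $f(\bar H+H)$ times a product of two first-order pieces of $\phi$ (one possibly being $\phi/r$), and by Cauchy--Schwartz, using Lemma \ref{lem2} to convert residual $\phi^{2}/r^{2}$ into energy flux, these are absorbed into $\delta_{0}(S^{\ep}[\phi](\tau)+g^{p}[\phi](\tau)+R^{p-\a_{2}}\tilde{E}[\phi](\tau))$. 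The special case $p=0$ is cleaner because $f=\kappa_{M}\leq 1$ is uniformly bounded, so the main Minkowski piece $\int f(L\psi)^{2}dvd\om$ is itself dominated directly by $\tilde{E}[\phi](\tau)$ rather than by $g^{p}[\phi](\tau)$.

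The main obstacle I anticipate is keeping every estimate uniform in the cutoff parameter $M$ --- in particular verifying that all the $r^{p-1}\kappa_M'/M$ contributions produced by $f'$ and $\pa_v(f/r)$ on the annulus $r\in[M+R,2M+R]$ really do cancel or get absorbed into the stated bound rather than growing with $M$ --- and choosing the auxiliary exponents when invoking Lemma \ref{lempphi2} so that the $r=R$ boundary term is sharpened from the easy weight $R^{p}$ down to $R^{p-\a_{2}}$ within the admissible range of $\a_{2}$ fixed by the Proposition's hypotheses. The rest is a careful but essentially mechanical bookkeeping of the decomposition and Cauchy--Schwartz.
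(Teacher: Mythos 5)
Your proposal follows essentially the same route as the paper: formula \eqref{curStau}, the decomposition $g=m_0+h$, the reduction of the Minkowski part to $f|L\psi|^2-\f12 L(\chi\psi^2)$, integration by parts in $v$, and the same trio of Lemmas \ref{lem1}, \ref{lem2}, \ref{lempphi2} for the errors; the algebra you give for the flat part ($r^2(L\phi)^2+rL\phi\cdot\phi=(L\psi)^2-(\psi/r)L\psi$, etc.) reproduces the paper's identity exactly. Two small corrections to the bookkeeping. First, after the integration by parts the bulk terms do not cancel up to $\kappa_M'/M$ pieces: the identity $-f(\psi/r)L\psi-\f12(rf'-f)\phi^2=-\f12L(\chi\psi^2)$ is exact for any $f$ (cutoff included), and what survives is $\f12\chi\psi^2 L(\sqrt{-G})$, which is small only because $L=\pa_v$ is a good derivative so $|L\sqrt{-G}|\les \bar H$ by \eqref{HH}; this residual bulk term, of size $\delta_0\int_{S_\tau}r^{p-\a_2}\phi^2dvd\om$, is precisely where Lemma \ref{lempphi2} is invoked. (Uniformity in $M$ is then immediate since $M^{-1}\les r^{-1}$ on the support of $\kappa'$.) Second, the $r=R$ boundary term $\int_\om\chi\psi^2\sqrt{-G}d\om|_{r=R}$ is a single-sphere integral, so Lemma \ref{lempphi2} (which controls $v$-integrals over $S_\tau$) cannot ``sharpen'' it; it is bounded by Lemma \ref{lem1} alone as $R^{p}\tilde{E}[\phi](\tau)$, which is exactly what the paper keeps. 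Neither point changes the structure of the argument.
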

\begin{proof}
Recall vector field $\tilde{J}^X[\phi]$ defined in line
\eqref{mcurent}. On $S_\tau$, we use the formula \eqref{curStau} and
we can compute
\begin{align*}
\int_{S_\tau}i_{\tilde{J}^X[\phi]}d\vol=\int_{S_\tau}(-2\pa^{\Lb}\phi
X(\phi)+ X^{\Lb} \pa^\ga\phi\pa_\ga\phi+ \pa^{\Lb}\chi
\cdot\phi^2-\chi \pa^{\Lb}\phi^2)\sqrt{-G}r^2dvd\om.
\end{align*}
For the special case when $p=0$, note that
\[
 |\pa^{\Lb}\phi|\les |\overline{\pa_v}\phi|+\bar H|\pa\phi|,\quad  |X(\phi)|=|-2\pa^{\Lb}\phi+g^{\Lb\Lb}\Lb(\phi)|\les|\overline{\pa_v}\phi|+\bar H|\pa\phi|,
\quad |\pa^{\Lb}\chi|\les r^{-2}.
\]
Thus we have
\begin{align*}
 |\int_{S_\tau}i_{\tilde{J}^X[\phi]}d\vol|\les \int_{S_\tau}|\overline{\pa_{v}}\phi|^2 +\frac{|\bar\pa\phi|^2}{(1+r)^{1+\ep}} \quad r^2 dvd\om\les \tilde{E}[\phi](\tau)
+\delta_0 S^\ep[\phi](\tau).
\end{align*}

For general $p$, we expand the integral on $S_\tau$ according to the metric decomposition $g=h+m_0$. Recall that $X=f(-2\pa^{\Lb}+g^{\Lb\Lb}\Lb)$. In
particular, $X^{\Lb}=-fg^{\Lb\Lb}$ and the main part of the vector
field $X$ is $-2fm_0^{L\Lb}L=fL$. Since $\chi=r^{-1}f$, we can write
\[
\pa^{\Lb}\phi L(\phi)+\f12 \chi \pa^{\Lb} \phi^2=\chi\pa^{\Lb} \phi
\cdot r L(\phi)+\chi \phi\pa^{\Lb}\phi=\chi \pa^{\Lb}\phi\cdot
L(\psi),\quad \psi=r\phi.
\]
Therefore we have
\begin{align*}
&\quad r^2(-2\pa^{\Lb}\phi X\phi+ \pa^{\Lb}\chi
\cdot\phi^2-\chi
\pa^{\Lb}\phi^2)\\
&=-2rf\pa^{\Lb}\phi L\psi-\f12\psi^2
L\chi-2r^2\pa^{\Lb}\phi (X-fL)\phi+ h^{\Lb A}A(\chi)\psi^2\\
&=rf L(\phi) L(\psi)-\f12 \psi^2 L\chi-2rf h^{\Lb
A}A(\phi)L(\psi)-2r^2\pa^{\Lb}\phi (X-fL)\phi+ h^{\Lb
A}A(\chi)\psi^2\\
&=f|L\psi|^2-\f12 L(\chi\psi^2)+Er_4,
\end{align*}
where the error $Er_4$ can be bounded as follows
\begin{align*}
|Er_4|&\les f|L\psi|(H|\overline{\pa_v}\psi|+H|\phi|+\bar
H|\pa\psi|)+H|\chi'||\psi|^2\\
&\quad +f(|L\psi|+|\phi|+\bar
H|\pa\psi|+H|\nabb\psi|)(H|\overline{\pa_v}\psi|+H|\phi|+\bar
H|\pa\psi|)\\
&\les fH|L\psi||\overline{\pa_v}\psi|+f\bar
H|L\psi||\pa\psi|+fH\bar
H|\pa\psi||\overline{\pa_v}\psi|+fH^2|\overline{\pa_v}\psi|\\
&\quad +fH|\overline{\pa_v}\psi||\phi|+fH\bar H
|\pa\psi||\phi|+fH|\phi|^2+f\bar H^2 |\pa\psi|^2\\
&\les \delta_0(
r^{-1-\ep}|\pa\psi|^2+r^2|\overline{\pa_v}\phi|^2+r^{p}|L\psi|^2+r^{p-\a_2}|\phi|^2).
\end{align*}
Similarly, using the estimate \eqref{nullforest} to control the
null form $\pa^\ga\phi\pa_\ga\phi$, we have
\begin{align*}
|r^2 X^{\Lb}\pa^\ga\phi\pa_\ga\phi|&\les f\bar
H(|L\psi||\pa\psi|+|\phi||\pa\psi|+|\nabb\psi|^2+|\phi|^2+\bar
H|\pa\psi|^2+H|\overline{\pa_v}\psi||\pa\psi|)\\
&\les \delta_0(
r^{-1-\ep}|\pa\psi|^2+r^2|\overline{\pa_v}\phi|^2+r^{p}|L\psi|^2+r^{p-\a_2}|\phi|^2).
\end{align*}
After integrating over $S_\tau$ with measure $dvd\om$, the first term on the right hand side of the above two inequalities can be controlled by
$S^\ep[\phi](\tau)$. The integral of the second term gives the energy flux through $S_\tau$. The last term can be controlled by using Lemma
\ref{lempphi2}. Summarizing, we have shown that
\begin{align*}
&\left|\int_{S_\tau}i_{\tilde{J}^X[\phi]}d\vol-\int_{S_\tau}f |L\psi|^2\sqrt{-G}dvd\om+\f12\int_{S_\tau} L(\chi \psi^2)\sqrt{-G}dvd\om\right|\\
&\les\delta_0 (S^\ep[\phi](\tau)+R^{p-\a_2}
\tilde{E}[\phi](\tau)+g^p[\phi](\tau)).
\end{align*}
The Lemma then follows if we can control the
integral of the term $-\f12 L(\chi \psi^2)$. We use integration by
parts to pass the derivative $L=\pa_v$ to $\sqrt{-G}$. By the assumption
\eqref{HH}, $L(\sqrt{-G})$ decays better in $r$. More precisely,
using Lemma \ref{lem1} and the fact that $\chi$ has compact support,
we have
\begin{align*}
\left|\int_{S_\tau}L(\chi\psi^2)\sqrt{-G}dvd\om\right|&=\left|\int_{S_\tau}\chi\psi^2L
(\sqrt{-G})dvd\om-\left.\int_{\om}\chi\psi^2\sqrt{-G}d\om\right|_{r=R}\right|\\
&\les \delta_0\int_{S_\tau}r^{p-\a_2}\phi^2
dvd\om+R^p\tilde{E}[\phi](\tau)
\end{align*}
Then again using Lemma \ref{lempphi2}, we can conclude the lemma.
\end{proof}

\bigskip

The above lemma shows that the energy flux through $S_\tau$ is almost equal to $g^p[\phi](\tau)$. We proceed to estimate the other terms in the $p$-weighted energy identity
\eqref{pMenergy}. We will estimate the boundary term on $C_R$ later and now we rewrite the energy terms on
the right hand side of \eqref{pMenergy} in terms of $\psi=r\phi$. Since $\chi=r^{-1}f$, we have the identity
\begin{align*}
\f12 r^2f' |\pa_v\phi|^2-\f12 r^2\Delta\chi \cdot \phi^2&=\f12
f'|\pa_v\psi|^2-\f12 f'\pa_v(r\phi^2)-\f12 \pa_v f' \cdot
r\phi^2\\
&=\f12 f'|\pa_v\psi|^2-\f12 \pa_v(f' r\phi^2).
\end{align*}
Here $\Delta$ is the Laplacian operator on flat $\mathbb{R}^3$. The first term on the RHS of the above identity is what we want. We use
integration by parts to control the integral of the second term. As $f$ has compact support, we can show that
\begin{align}
\label{errterm0}
&-\f12\int_{\tau_1}^{\tau_2}\int_{S_\tau}\pa_v (f'r\phi^2)
\sqrt{-G}dvd\om d\tau\\
\notag
&=\f12\int_{\tau_1}^{\tau_2}\int_{S_\tau}f'
r\phi^2 \pa_v\sqrt{-G} dvd\om d\tau+\f12
\int_{C_R}f'r\phi^2\sqrt{-G}d\om d\tau.
\end{align}
The first term on the RHS is an error term and we will estimate it later. We now move the
second term to the left hand side of the $p$-weighted energy identity \eqref{pMenergy} and combine it
with the original boundary term on $C_R$. The new boundary term on $C_R$ can
be written as
\begin{align*}
&-\int_{C_R}i_{\tilde{J}^X[\phi]}d\vol-\f12
\int_{C_R}f'r\phi^2\sqrt{-G}d\om d\tau\\
&=-f(R)\int_{C_R}i_{J^Y[\phi]}d\vol +\f12 \int_{C_R}\left(r^2\pa^r \chi \cdot
\phi^2-fr\pa^r(\phi^2) -f' r\phi^2 \right)\sqrt{-G}d\om d\tau\\
&=-R^p(\int_{C_R}i_{J^Y[\phi]}d\vol+\f12 \int_{C_R}\pa^r(r\phi^2)\sqrt{-G}d\tau d\om)+\f12 \int_{C_R}(\pa^r f - f')r\phi^2 \sqrt{-G}d\om d\tau\\
&=R^p B(C_R)+\f12 \int_{C_R}(\pa^r f - f')r\phi^2 \sqrt{-G}d\om d\tau,
\end{align*}
where $\pa^r=\pa^L-\pa^{\Lb}$ and we have used $B(C_R)$ to denote the integral. We see that $B(C_R)$ is independent of the power $p$. Hence to
control the boundary term $B(C_R)$, it suffices to take $p=0$, which is essentially the energy estimates we have done in the previous section.
We will estimate the boundary term $B(C_R)$ later. We now group the error term on the boundary $C_R$ in the above inequality with the error term
in \eqref{errterm0} and denote
\[
 Er_5=\f12 \int_{C_R}(\pa^r f - f')r\phi^2 \sqrt{-G}d\om d\tau-\f12\int_{\tau_1}^{\tau_2}\int_{S_\tau}f'
r\phi^2 \pa_v\sqrt{-G} dvd\om d\tau.
\]
Since $|\pa_v\sqrt{-G}|\les \bar H$, using Lemma \ref{lem1} and Lemma \ref{lem2}, we can estimate $Er_5$
\begin{equation}
\label{Er5}
 |Er_5|\les \int_{\tau_1}^{\tau_2}(\int_{\om} \bar{H} R^{p}\phi^2 d\om+ \int_{S_\tau}f\bar H \phi^2 dvd\om )d\tau\les\delta_0 R^{p-1-2\a}\int_{\tau_1}^{\tau_2}
\tilde{E}[\phi](\tau)d\tau.
\end{equation}
Now from the $p$-weighted energy identity \eqref{pMenergy}, the above discussion leads to the following energy estimate
\begin{align}
\label{pMenergy1} &\int_{{S}_{\tau_1}}i_{\tilde{J}^X[\phi]}d\vol -
\int_{{S}_{\tau_2}}i_{\tilde{J}^X[\phi]}d\vol+R^p B(C_R)+Er_5\\
\notag
&=\int_{\tau_1}^{\tau_2}\int_{S_\tau}F(X(\phi)+\chi\phi)
 + \f12 f'r^{-2}|\pa_v\psi|^2+r^{-2}(\chi-\f12 f')|\nabb\psi|^2 +E(X)d\vol.
\end{align}
The boundary term on $S_\tau$ is almost equal to $g^p[\phi](\tau)$ by Lemma \ref{lempbdest}. If
$f=r^p$, the energy term on the right hand side will give us a positive sign. This will be made to be rigorous by taking the limit $M\rightarrow \infty$. Here we
recall that $M$ is the parameter in the cutoff function $\kappa_M$.

Finally in the above energy identity \eqref{pMenergy1}, we estimate the boundary term $B(C_R)$ by taking $p=0$. For this case $f\leq 1$, $|\chi|\leq r^{-1}$.
The inhomogeneous term $F(X(\phi)+\chi\phi)$ can be bounded by
\[
 |F(X(\phi)+\chi\phi)|\les |F||\bar\pa\phi|\les |F|^2(1+r)^{1+\ep}+(1+r)^{-1-\ep}|\bar\pa\phi|^2.
\]
Now if
\[
\int_{\tau_1}^{\tau_2}\tilde{E}[\phi](\tau)d\tau<\infty,
\]
then we have
\begin{align*}
 \lim\limits_{M\rightarrow\infty}|\int_{\tau_1}^{\tau_2}\int_{S_\tau}(\kappa_M)'r^{-2}(|\pa_v\psi|^2-|\nabb\psi|^2)d\vol|\les \lim\limits_{M\rightarrow\infty}
\frac{1}{M}\int_{\tau_1}^{\tau_2}\tilde{E}[\phi](\tau)d\tau=0.
\end{align*}
Therefore let $M\rightarrow \infty$ in the above energy identity \eqref{pMenergy1} with $p=0$. Using estimate \eqref{errEx0} to control the error term $E(X)$ and estimate \eqref{Er5}
to bound $Er_5$ and Lemma \ref{lempbdest} to control the boundary terms on $S_{\tau_i}$, $i=1, 2$, we then have the estimate for the boundary term $B(C_R)$
\begin{equation*}
\begin{split}
|B(C_R)|&\les \delta_0 R^{-1-2\a}\int_{\tau_1}^{\tau_2}\tilde{E}[\phi](\tau)d\tau+\tilde{E}[\phi](\tau_i)+\delta_0S^\ep[\phi](\tau_i)
+\int_{\tau_1}^{\tau_2}\tau_+^{-1-\a}\tilde{E}[\phi](\tau)d\tau\\
&\quad +D^{\ep}[F]_{\tau_1}^{\tau_2}+\int_{\tau_1}^{\tau_2}\int_{S_\tau}\frac{|\bar\pa\phi|^2}{(1+r)^{1+\ep}}+\frac{|\nabb\phi|^2}{1+r} dxd\tau.
\end{split}
\end{equation*}
If $\int_{\tau_1}^{\tau_2}\tilde{E}[\phi](\tau)d\tau=\infty$, then the above estimate for $B(C_R)$ holds automatically.
Now we use the integrated energy inequality \eqref{ILE0} and the energy inequality \eqref{eb} to improve the above estimate for the boundary term $B(C_R)$. We have
\begin{equation}
 \label{bdest}
\begin{split}
 |B(C_R)|&\les\delta_0 R^{-1-2\a}\int_{\tau_1}^{\tau_2}\tilde{E}[\phi](\tau)d\tau+\tilde{E}[\phi](\tau_1)+\delta_0 S^\ep[\phi](\tau_i)+D^{\ep}[F]_{\tau_1}^{\tau_2},
\end{split}
\end{equation}
where $S^\ep[\phi](\tau_i)=S^\ep[\phi](\tau_1)+S^\ep[\phi](\tau_2)$. This gives the estimate for the boundary term $B(C_R)$ in the above energy identity \eqref{pMenergy1}.

\bigskip

Now in the above energy identity \eqref{pMenergy1}, we have estimate \eqref{Exeasy} for the error term $E(X)$, estimate \eqref{pWeF} for the inhomogeneous term $F(X\phi+\chi\phi)$,
estimate \eqref{Er5} for the error term $Er_5$ and the above estimate \eqref{bdest} for the boundary term $B(C_R)$. The boundary term on $S_\tau$ has been discussed in Lemma
\ref{lempbdest}. As the function $f$, $\chi$ depends on the parameter $M$ in the cutoff function $\kappa_M$, we now argue that we can push the parameter $M$
to infinity and conclude the $p$-weighted
energy inequalities \eqref{pwe1}, \eqref{pwe1a}.

Without loss of generality, we can assume that
$\kappa$ is decreasing. We find that
\begin{align*}
&\f12 f'= \f12 pr^{p-1}\kappa_M+\f12 M^{-1}r^p
\kappa'(\frac{r-R}{M}),\\
&\chi-\f12 f'=(1-\frac{
p}{2})r^{p-1}\kappa_M-\f12 M^{-1}r^p\kappa'(\frac{r-R}{M})\geq (1-\frac{p}{2})r^{p-1}\kappa_M.
\end{align*}
Note that $\kappa'$ is supported on $[1, 2]$. We
conclude that if
\[
G^{p-1, 0}[\phi]_{\tau_1}^{\tau_2}=\int_{\tau_1}^{\tau_2}\int_{S_\tau}r^{p-1}|L\psi|^2dvd\om
d\tau
\]
is finite, then
\begin{align*}
&\lim\limits_{M\rightarrow\infty}|\int_{\tau_1}^{\tau_2}\int_{S_\tau}M^{-1}r^p\kappa'(\frac{r-R}{M})|L\psi|^2r^{-2}dvd\om d\tau\\
&\leq
\lim\limits_{M\rightarrow\infty}\int_{\tau_1}^{\tau_2}\int_{r\geq
M}r^{p-1}|L\psi|^2dvd\om d\tau=0.
\end{align*}
We first consider the $p$-weighted energy inequality when $p=1$. Note that
\[
G^{0, 0}[\phi]_{\tau_1}^{\tau_2}=\int_{\tau_1}^{\tau_2}\int_{S_\tau}r^2|L\phi|^2-L(r\phi^2)dvd\om
d\tau\les \int_{\tau_1}^{\tau_2}\tilde{E}[\phi](\tau)d\tau.
\]
For fixed $\tau_1<\tau_2$, it suffices to prove Proposition \ref{ILEthmpw} when
\[
\tilde{E}[\phi](\tau_1)+S^\ep[\phi](\tau_1)+S^\ep[\phi](\tau_2)+D^{\ep}[F]_{\tau_1}^{\tau_2}<\infty.
\]
Otherwise, all the estimates in Proposition \ref{ILEthmpw} hold automatically for $\tau_1<\tau_2$. In this case using the energy estimate \eqref{eb}, we have
\[
 \int_{\tau_1}^{\tau_2}\tilde{E}[\phi](\tau)d\tau\leq C(\tau_1, \tau_2) (\tilde{E}[\phi](\tau_1) +S^\ep[\phi](\tau_1)+S^\ep[\phi](\tau_2)+D^{\ep}[F]_{\tau_1}^{\tau_2})<\infty,
\]
where $C(\tau_1, \tau_2)$ is constant. This further implies that
$G^{0, 0}[\phi]_{\tau_1}^{\tau_2}$ is finite. Therefore by the argument above, we can let $M$ go to
infinity in the $p$-weighted energy inequality \eqref{pMenergy1} with $p=1$ and we can conclude from the estimates \eqref{Exeasy},
\eqref{pWeF}, \eqref{Er5}, \eqref{bdest} together with Lemma \ref{lempbdest} that
\begin{align}
\notag
 &g^1[\phi](\tau_2)+\int_{\tau_1}^{\tau_2}\int_{S_\tau}|\overline{\pa_v}\psi|^2dvd\om d\tau\les \delta_0 (g^1[\phi](\tau_2)+
\int_{\tau_1}^{\tau_2}\int_{S_\tau}|\overline{\pa_v}\psi|^2dvd\om d\tau)\\
\label{pwe100}
&+\delta_0(\int_{\tau_1}^{\tau_2}\tilde{E}[\phi](\tau)d\tau+  G^{1, 1+\ep}[\phi]_{\tau_1}^{\tau_2})+R(\tau_1)_+^{1-\a}\tilde{E}[\phi](\tau_1)+\delta_0 R(\tau_i)_+^{1-\a} S^\ep[\phi](\tau_i)\\
\notag
&+g^1[\phi](\tau_1)+\int_{\tau_1}^{\tau_2}D^{\a_1}[F]_{\tau}^{\tau_2}d\tau+ R(\tau_1)_+ D^{\a_1}
[F]_{\tau_1}^{\tau_2},
\end{align}
where we used the energy inequality \eqref{eb} to estimate $\tilde{E}^{1+\f12 \a}[\phi]_{\tau_1}^{\tau_2}$. For small
$\delta_0$ the first two terms can be absorbed. Usually $G^{1, 1+\ep}[\phi]_{\tau_1}^{\tau_2}$ can be bounded
by using Gronwall's inequality. However, due to the presence of
$S^\ep[\phi](\tau_i)$ on the right hand side, we can not use Gronwall's
inequality directly. However we can
take $\tau_2=\tau$ in the above inequality. Multiply both side by
$\tau_+^{-1-\ep}$ and then integrate
it with respect to $\tau$ from $\tau_1$ to $\tau_2$. We retrieve $G^{1, 1+\ep}[\phi]_{\tau_1}^{\tau_2}$ on the left hand side. The same term
will appear on the right hand side but with the small coefficient
$\delta_0$. We thus can estimate it.

Now to prove the $p$-weighted energy inequality \eqref{pwe1} with $p=1$, we
need to recover $\int_{\tau_1}^{\tau_2}\tilde{E}[\phi](\tau)d\tau$ on the
left hand side of \eqref{pwe100}. Note that
\[
 \int_{S_\tau}|\overline{\pa_v}\psi|^2dvd\om =\int_{S_\tau}|\overline{\pa_v}\phi|^2r^2 +L(r\phi^2)dvd\om=\int_{S_\tau}|\overline{\pa_v}\phi|^2r^2dvd\om-\left.\int_{\om}r\phi^2d
\om\right|_{r=R}.
\]
For $R\geq 1$, we have
\begin{equation*}
 R^3\int_{\om}\phi^2(\tau, R, \om)d\om=\int_{0}^{R}\int_{\om}\pa_r(r^3\phi^2)d\om dr\leq 3\int_{r\leq R}
 \phi^2dx + \int_{r\leq R}R^2|\pa_r\phi|^2+\phi^2dx.
\end{equation*}
 Hence we have
\[
 R\int_{\om}\phi^2(\tau, R, \om)d\om\leq 8\int_{r\leq R}|\pa\phi|^2+(1+R)^{-2}\phi^2 dx\leq 8 \int_{r\leq R}|\bar\pa\phi|^2dx.
\]
Now add both side of the above $p$-weighted energy inequality \eqref{pwe100} when $p=1$ with
\[
 \int_{\tau_1}^{\tau_2}\int_{r\leq R}|\pa\phi|^2dxd\tau+R\int_{\tau_1}^{\tau_2}\int_{\om}\phi^2(\tau, R, \om)d\om d\tau\les (1+R)^{1+\ep}I^\ep[\phi]_{\tau_1}^{\tau_2}.
\]
Then the left hand side of \eqref{pwe100} becomes
\[
 g^1[\phi](\tau_2)+\int_{\tau_1}^{\tau_2}\tilde{E}[\phi](\tau)d\tau.
\]
For small $\delta_0$ the first term in the second line of \eqref{pwe100} can be absorbed. Then using the integrated energy
inequality \eqref{ILE0} to control $I^\ep[\phi]_{\tau_1}^{\tau_2}$, we can conclude from \eqref{pwe100} the $p$-weighted energy inequality
\eqref{pwe1} for $p=1$.

\bigskip

Finally we prove the $p$-weighted energy inequality \eqref{pwe1a} when $p=1+\a_1$. Having the $p$-weighted energy inequality when $p=1$, which in particular gives the bound
for $g^1[\phi](\tau)$ (we may assume the right hand side of \eqref{pwe1a} is finite), we conclude that
$\int_{\tau_1}^{\tau_2}g^1[\phi](\tau)d\tau$ is finite.
In particular, we have
\[
 \int_{\tau_1}^{\tau_2}\int_{S_\tau}r^{\a_1}|L\psi|^2dvd\om d\tau\leq  \int_{\tau_1}^{\tau_2}g(1, \tau)d\tau<\infty.
\]
Therefore in the $p$-weighted energy inequality \eqref{pMenergy1} we can set $p=1+\a_1$ and then
let the parameter $M$ in the cutoff function go to infinity. Similar to the above $p$-weighted energy inequality when $p=1$, for small $\delta_0$, we can show that
\begin{align*}
& g^{1+\a_1}[\phi](\tau_2)+\int_{\tau_1}^{\tau_2}\int_{S_\tau}r^{\a_1}|\overline{\pa_v}\psi|^2dvd\om d\tau\les g^{1+\a_1}[\phi](\tau_1)+\delta_0\int_{\tau_1}^{\tau_2}\tilde{E}[\phi](\tau)d\tau
\\
&+ \delta_0 G^{1+\a_1, 1+\ep}[\phi]_{\tau_1}^{\tau_2}+R^{1+\a_1}(\tau_1)_+^{1-\a}\tilde{E}[\phi](\tau_1)+\delta_0 R^{1+\a_1}(\tau_i)_+^{1-\a}
 S^\ep[\phi](\tau_i)\\
&+\int_{\tau_1}^{\tau_2}(\tau)_+^{\ep}D^{\a_1}[F]_{\tau}^{\tau_2}d\tau+ R^{1+\a_1}(\tau_1)_+^{1+\ep}D^{\a_1}
[F]_{\tau_1}^{\tau_2}.
\end{align*}
The integral of the energy flux $\tilde{E}[\phi](\tau)$ from $\tau_1$ to $\tau_2$ can be controlled by the $p$-weighted energy inequality \eqref{pwe1} with $p=1$.
To estimate $G^{1+\a_1, 1+\ep}[\phi]_{\tau_1}^{\tau_2}$, we set $\tau_2=\tau$
in the above inequality and then integrate both side with weights $\tau_+^{-1-\ep}$ from $\tau_1$ to $\tau_2$. And then we can conclude \eqref{pwe1a} from the above inequality.
We thus finished the proof of Proposition \ref{ILEthmpw}.

\section{Integrated Local Energy Decay}

We have shown in the previous section the integrated energy estimates Proposition \ref{ILEthm} and the $p$-weighted energy inequalities
Proposition \ref{ILEthmpw} without using any vector fields with positive weights in $t$. We now argue that under appropriate
assumptions on the inhomogeneous term $F$ as well as the data on the initial hypersurface $\Si_0$ the energy flux $E[\phi](\tau)$ decays in
$\tau$.

We still consider the linear wave equation \eqref{LWAVEEQ} on $(\mathbb{R}^{3+1}, g)$ with metric $g$ satisfies the conditions in Proposition \ref{ILEthmpw}. Let $E_0$ denote the
size of the data on $\Si_0$
\[
 E_0:=\tilde{E}[\phi](0)+S^\ep[\phi](0)+g^{1+\a_1}[\phi](0),
\]
where $\ep$, $\a_1$ are small positive constants appeared in Proposition \ref{ILEthmpw}.
We always assume that $E_0$ is finite.

On asymptotically flat spacetimes, we are not able to show the decay of the energy flux $E[\phi](\tau)$. However, we can show that integrated local energy
$I^\ep[\phi]_{\tau}^{\infty}$ decays.
\begin{prop}
 \label{propILED}
Assume that the inhomogeneous term $F$ satisfies
\[
 D^{\a_1}[F]_{\tau_1}^{\tau_2}\leq C_1(\tau_1)_+^{-1-\a},\quad \forall \tau_2\geq \tau_1.
\]
Then we have the integrated local energy decay
\begin{equation*}
I^\ep[\phi]_{\tau_1}^{\tau_2}=\int_{\tau_1}^{\tau_2}\int_{\Si_\tau}\frac{|\bar\pa\phi|^2}{(1+r)^{1+\ep}}dxd\tau\leq C_{\a, R}(E_0+
C_1) (\tau_1)_+^{-1-\a},\quad\forall
 \tau_2\geq \tau_1.
\end{equation*}
Here the constant $C_{\a, R}$ depends on $\a$, $R$. And we recall here that the small positive constants $\ep$, $\a_1$ satisfy the relations in Proposition \ref{ILEthmpw}.
\end{prop}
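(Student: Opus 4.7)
The plan is a hierarchy of estimates in the spirit of Dafermos-Rodnianski, coupling the $p$-weighted inequalities of Proposition \ref{ILEthmpw} with the integrated local energy and energy bounds of Proposition \ref{ILEthm}. Since \eqref{pwe1} and \eqref{pwe1a} are available only at the two weights $p=1$ and $p=1+\a_1$ rather than a continuous range, the iteration in $p$ alone is short and most of the decay must be extracted by pigeonholing in $\tau$ and exploiting the near-monotonicity of $\tilde E[\phi](\tau)$ supplied by \eqref{eb}.

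First I would apply \eqref{pwe1a} and \eqref{pwe1} with $\tau_1=0$, $\tau_2=\tau$. Since $D^{\a_1}[F]_{\tau_1}^{\tau_2}\leq C_1(\tau_1)_+^{-1-\a}$ and $\ep<\a$, both $\int_0^{\tau_2}\tau_+^\ep D^{\a_1}[F]_\tau^{\tau_2}d\tau$ and $\tau_+^{1+\ep}D^{\a_1}[F]_0^{\tau}$ are uniformly bounded, yielding the uniform estimates $g^{1+\a_1}[\phi](\tau)+g^1[\phi](\tau)\les E_0+C_1$ together with $\int_0^\infty\bigl(\bar g^{\a_1}[\phi](\tau)+\tilde E[\phi](\tau)\bigr)d\tau\les E_0+C_1$. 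A mean-value argument on dyadic intervals then produces times $\tau^*\sim\tau$ at which $\bar g^{\a_1}[\phi](\tau^*)+\tilde E[\phi](\tau^*)\les (E_0+C_1)/\tau$. The almost-monotonicity $\tilde E(s)\les\tilde E(\tau^*)+(\text{small})$ from \eqref{eb} propagates this pointwise decay forward to all $s\geq\tau^*$, giving a first rate $\tilde E(\tau)+S^\ep(\tau)\les(E_0+C_1)\tau_+^{-\gamma_1}$ with some $\gamma_1>0$. Feeding this back into \eqref{pwe1a} and \eqref{pwe1}, the error terms $R^{1+\ep}(\tau_1)_+^{1-\a}\tilde E(\tau_1)$ and $\delta_0 R^{1+\ep}(\tau_i)_+^{1-\a}S^\ep(\tau_i)$ improve to $\tau_+^{1-\a-\gamma_1}$, and repeating the pigeonhole/monotonicity step yields an improved exponent $\gamma_2>\gamma_1$. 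Bootstrapping for finitely many rounds drives $\gamma$ up to $1+\a$; inserting this into \eqref{ILE0} together with $D^\ep[F]_{\tau_1}^{\tau_2}\le D^{\a_1}[F]_{\tau_1}^{\tau_2}\les C_1(\tau_1)_+^{-1-\a}$ then yields $I^\ep[\phi]_{\tau_1}^{\tau_2}\les(E_0+C_1)(\tau_1)_+^{-1-\a}$.

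The main obstacle is that the error terms in \eqref{pwe1} and \eqref{pwe1a} carry growth factors $R^{1+\ep}(\tau_1)_+^{1-\a}$ and $R^{1+\a_1}(\tau_1)_+^{1+\ep}$, an artifact of the slow metric decay $\tau_+^{-\f12-\f12\a}$ permitted by \eqref{HH} on $S_\tau$. Only once the decay exponent $\gamma$ exceeds $1-\a$ do these factors produce a net gain in each bootstrap round, so threading the iteration so that the exponent accumulates to the terminal rate $1+\a$, while respecting the strict inequalities $\ep<\a^2/4$ and $\a<\a_1<\a_2\leq\tfrac{7}{3}\a-\a_1-\ep$ imposed in Proposition \ref{ILEthmpw}, is the technical heart of the argument.
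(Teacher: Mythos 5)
Your overall architecture (couple the two $p$-weighted inequalities with \eqref{ILE0}/\eqref{eb}, pigeonhole in $\tau$, bootstrap the decay exponent up to $1+\a$, and finish with \eqref{ILE0}) is the same as the paper's, but the proposal has a genuine gap precisely at the point the paper identifies as the main difficulty: the terms $\delta_0(\tau_i)_+^{1-\a}S^\ep[\phi](\tau_i)$, with $i$ including the \emph{later} endpoint $\tau_2$, on the right-hand sides of \eqref{pwe1a}, \eqref{pwe1} and \eqref{eb}. Your first step applies \eqref{pwe1a}, \eqref{pwe1} with $\tau_1=0$ and arbitrary $\tau_2$ and claims uniform bounds on $g^{1+\a_1}[\phi](\tau)$, $g^1[\phi](\tau)$ and on $\int_0^\infty\tilde E[\phi](\tau)\,d\tau$; but the right-hand sides contain $\delta_0(\tau_2)_+^{1-\a}S^\ep[\phi](\tau_2)$, which is not controlled at this stage, so these ``uniform estimates'' do not follow. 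Your subsequent pigeonhole is performed on the integral $\int_0^\infty\tilde E\,d\tau$ whose finiteness was obtained circularly. For the same reason, the claim that \eqref{eb} propagates the decay of $\tilde E$ from a good time $\tau^*$ ``to all $s\geq\tau^*$'' (and in particular the claimed decay of $S^\ep[\phi](\tau)$ for all $\tau$) fails: \eqref{eb} itself carries $S^\ep[\phi](\tau_2)$ on its right-hand side, so decay can only be transported to times at which $S^\ep$ is already known to be small. The paper's proof indeed never asserts pointwise decay of $E[\phi](\tau)$ or $S^\ep[\phi](\tau)$ for all $\tau$ inside this proposition; the conclusion is only for the (monotone-in-endpoints) integrated quantity $I^\ep$.

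The correct ordering, which is the missing idea, is: (i) use the a priori boundedness \eqref{boundILE} --- nowhere invoked in your proposal --- to seed the iteration with $I^\ep[\phi]_0^\infty\les E_0+C_1<\infty$; (ii) pigeonhole \emph{this} spacetime integral to produce the dense good set $T$ of \eqref{setT} on which $S^\ep[\phi](\tau)\les M\tau_+^{-1-\b}$, and verify that every dyadic interval $[2\tau_1,4\tau_1]$ meets $T$; (iii) only then apply \eqref{pwe1a}, \eqref{pwe1}, \eqref{eb} with endpoints restricted to $T$, where the offending $S^\ep(\tau_2)$ terms are harmless. You also omit the step that actually generates the gain in the exponent: the interpolation between the uniform bound $g^{1+\a_1}[\phi](\tau)\les M$ and the decay $\int_{S_\tau}(1+r)^{-1-\ep}|\pa_v\psi|^2\les M\tau_+^{-1-\b}$ valid on $T$, which yields $g^1[\phi](\tau)\les M\tau_+^{-\th\a}$ on $T$; without decay of $g^1[\phi](\tau_1)$ the right-hand side of \eqref{pwe1} never improves, and driving down only the $(\tau_1)_+^{1-\a}\tilde E(\tau_1)$ error, as you propose, cannot push the rate past the stall at $\int_{\tau_1}^{\tau_2}E\,d\tau\les g^1[\phi](\tau_1)\les M$. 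Finally, the conversion from the decay of $\int_{\tau_1}^{\tau_2}E\,d\tau$ to $E[\phi](\tau_2)\les M\tau_+^{-1-\min\{\a,\b,\th\a\}}$ on $T$ uses the dyadic choice $2\tau_1\leq\tau_2\leq4\tau_1$ within $T$, after which \eqref{ILE0} upgrades this to the stated decay of $I^\ep$ for all $\tau_1\leq\tau_2$ by density of $T$.
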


Compared to the case when the metric is flat outside the cylinder with radius $R$, the main difficulty is
the presence of $S^\ep[\phi](\tau_2)$ on the right hand side of
estimates in Proposition \ref{ILEthm}, \ref{ILEthmpw}. The idea is that we first show that $I^\ep[\phi]_0^\infty$ is finite from the estimate
\eqref{boundILE}. And then we can extract a sequence $\{\tau_n\}$ such that $S^\ep[\phi](\tau_n)$ decays. This will lead to the decay of the integrate energy.

\begin{proof}
The assumption above in particular implies that $D^\ep[F]_{0}^{\infty}$ is finite. Therefore by the boundedness of the integrated energy estimate
\eqref{boundILE} we have $I^\ep[\phi]_0^{\infty}$ is finite. In particular, we can conclude that there is a sequence
$\tau_n\rightarrow \infty$ such that $S^\ep[\phi](\tau_n)$ is finite. Then from the energy inequality \eqref{eb}, we infer that $\tilde{E}[\phi](\tau_n)$ is finite. In particular
$E^N[\phi]_0^{\tau_n}$ is finite for all $n$. Since $\tau_n\rightarrow\infty$, we have $E^N[\phi]_0^\tau$ is finite for all $\tau$. By Lemma \ref{lem1} all the previous estimates
hold if we replace $\tilde{E}[\phi](\tau)$ with $E[\phi](\tau)$.

Denote $M=E_0+C_1$. Without loss of generality we may assume $M>1$. For some big constant $C_2$ depending only on $R$ and $\a$, assume
\begin{equation}
\label{ILEass} I^\ep[\phi]_{\tau_1}^{\tau_2}=\int_{\tau_1}^{\tau_2}\int_{\Si_\tau}\frac{|\bar\pa\phi|^2}{(1+r)^{1+\ep}}dxd\tau\leq C_2M
 (1+\tau_1)^{-\b},\quad \forall \tau_2\geq \tau_1\geq 0
\end{equation}
for some $\b\in[0, 1+\a]$. Since $I^\ep[\phi]_0^\infty$ is finite, the above estimate holds for $\b=0$. The proposition claims that it holds for
$\b=1+\a$. We define a nonempty set
\begin{equation}
 \label{setT}
T=\{\tau\left|S^\ep[\phi](\tau)\leq\int_{\Si_\tau}\frac{|\bar\pa\phi|^2}{(1+r)^{1+\ep}}dx\right.\leq 10C_2M\tau_+^{-1-\b}\}.
\end{equation}
Let $\tau_1=0$ in the $p$-weighted energy inequality \eqref{pwe1a} with weights $r^{1+\a_1}$. We obtain
\begin{equation*}
\label{S12a} \int_{S_\tau}r^{1+\a_1}(\pa_v\psi)^2dvd\om\les M,\quad\forall \tau\in T.
\end{equation*}
By the definition of $T$, we have
\begin{equation*}
 \int_{S_\tau}\frac{(\pa_v\psi)^2}{(1+r)^{1+\ep}}dvd\om\les M\tau_+^{-1-\b},\quad\forall \tau\in T.
\end{equation*}
Here recall that $\psi=r\phi$. Interpolate between the above two
inequalities. We get
\begin{equation*}
 \int_{S_\tau}r(\pa_v\psi)^2dvd\om\les M(1+\tau)^{-\th\a},\quad\forall \tau\in
 T,
\end{equation*}
where
\[
\th=\min\{1, \frac{(1+\b)\a_1}{(2+\a_1+\ep)\a}\}.
\]
Then the $p$-weighted energy inequality \eqref{pwe1} when $p=1$
implies that
\begin{equation*}
\begin{split}
\int_{\tau_1}^{\tau_2}E[\phi](\tau)d\tau&\les M(\tau_1)_+^{-\th\a}+(\tau_1)_+^{1-\a}E[\phi](\tau_1), \quad\forall \tau_1,\tau_2\in T.
\end{split}
\end{equation*}
Now the energy inequality  \eqref{eb} shows
\begin{equation*}
 E[\phi](\tau_2)\les E[\phi](\tau)+S^\ep[\phi](\tau)+(\tau_2)_+^{-1-\a}+M(1+\tau_2)^{-1-\b}, \forall \tau\leq \tau_2,\tau_2\in T.
\end{equation*}
In particular, we have
\[
 E[\phi](\tau_2)\les M,\quad \forall \tau_2\in T.
\]
Combine this with the previous two estimates. We can show that
\begin{equation}
\label{tauE}
\begin{split}
 (\tau_2-\tau_1)E[\phi](\tau_2)\les &  M(\tau_1)_+^{-\th\a}+ (\tau_1)_+^{1-\a}E[\phi](\tau_1)\\
&+(\tau_2-\tau_1) (\tau_2)_+^{-1-\a}+M(\tau_2-\tau_1)(\tau_2)_+^{-1-\b}+M(\tau_1)_+^{-\b}
\end{split}
\end{equation}
for all $\tau_1$, $\tau_2\in T$, $\tau_1\leq \tau_2$. Since $0\in T$, in particular, let $\tau_1=0$. We get
\begin{equation*}
 E[\phi](\tau_2)\les M(\tau_2)_+^{-1},\quad \forall \tau_2\in T.
\end{equation*}
Now, fix $\tau_1\in T$, $\tau_1\geq 1$. We can always choose $\tau_2\in T$ such that
\[
 2\tau_1\leq \tau_2\leq 4\tau_1.
\]
Otherwise by the definition of $\tau$, we have
\[
 10C_2M\int_{2\tau_1}^{4\tau_1}\tau_+^{-1-\b}d\tau=\frac{6C_2M}{\b}((1+2\tau_1)^{-\b}-(1+4\tau_1)^{-\b})< C_2M(1+2\tau_1)^{-\b}.
\]
This is impossible as $\b\leq 1+\a<2$, $\tau_1\geq 1$. For such $\tau_1$ and $\tau_2$, the estimate \eqref{tauE} then implies that
\begin{equation}
\label{intedecay1} E[\phi](\tau_2)\les C_1(\tau_2)_+^{-1-\a}+M(\tau_2)_+^{-1-\b}+M(\tau_2)_+^{-1-\th\a},\quad\forall \tau_2\in T.
\end{equation}
Therefore from the integrated energy estimate \eqref{ILE0}, we can improve the integrated energy
\begin{equation*}
I^\ep[\phi]_{\tau_1}^{\tau_2}\les M(\tau_1)_+^{-1-\a}+M(\tau_1)_+^{-1-\b}+M(\tau_1)_+^{-1-\th\a} , \quad \forall \tau_1, \tau_2\in T.
\end{equation*}
As the set $T$ contains arbitrarily large $\tau$, the above estimate holds for all $\tau_2\geq \tau_1$, $\tau_1\in T$.
For general $\tau_1\geq 4$, note that we can choose $\tilde{\tau}_1\in T$ such that
\[
 \f12 \tau_1\leq \tilde{\tau}_1\leq \tau_1.
\]
Hence the above improved integrated energy inequality holds for all $0\leq \tau_1\leq \tau_2$. In particular, as estimate \eqref{ILEass} holds
for $\b=0$, we conclude that
\[
 I^\ep[\phi]_{\tau_1}^{\tau_2}\les M(\tau_1)_+^{-1} , \quad \forall \tau_1\leq \tau_2.
\]
That is the estimate \eqref{ILEass} holds for $\b=1$. Now from the definition of $\th$ and estimate \eqref{intedecay1} we again can show that
\eqref{ILEass} holds for
\[
\b=1+\min\{\a, \frac{2\a_1}{2+\a_1+\ep}\}.
\]
Recall that
\[
\frac{2\a+\a\ep}{2-\a}\leq \a_1.
\]Therefore estimate \eqref{intedecay1} holds for
\[
\b=1+\min\{\a, \frac{2\a_1}{2+\a_1+\ep}\}=1+\a.
\]
We thus finished the proof for the proposition.
\end{proof}

\bigskip

\section{Bootstrap assumptions}
The semilinear term $F$ in the equation \eqref{QUASIEQ} has already been discussed in \cite{yang1}. The quasilinear part $g^{\mu\nu\ga}\pa_\ga\phi\pa_{\mu\nu}\phi$ satisfies
the null condition. Cubic or higher order terms are always easy to handle for nonlinear wave equations. To
simplify the proof of the main Theorem \ref{maintheorem} but without loss of generality, instead of the general equation \eqref{QUASIEQ}, we consider the
following simple model of quasilinear wave equations
\begin{equation}
 \label{QUASIEQsim}
\begin{cases}
 \Box_g \phi+g^{\mu\nu\ga}\pa_\ga\phi\cdot \pa_{\mu\nu }\phi=0,\\
\phi(0, x)=\phi_0(x), \quad \pa_t\phi(0, x)=\phi_1(x),
\end{cases}
\end{equation}
on the time dependent inhomogeneous background $(\mathbb{R}^{3+1}, g)$ where the metric $g$ satisfies the estimates \eqref{HHqu} and $g^{\mu\nu\ga}$
are constants satisfy the null condition. We have to point out here that although we write the quasilinear part as $g^{\mu\nu\ga}\pa_\ga\phi\pa_{\mu\nu}\phi$, the null structure will
never be used in the region $\{|x|\leq R\}$. In this region the nonlinear term can be any quadratic form of $\phi$, $\pa\phi$.

We assume $\delta_0$ is sufficiently small, depending only on $\a$,
such that Proposition \ref{ILEthm}, Proposition \ref{ILEthmpw} and Proposition \ref{propILED}
hold. Recall that the initial data
 $(\phi_0, \phi_1)$ are smooth and are supported on $\{|x|\leq R\}$.
We use bootstrap argument to prove the main Theorem \ref{maintheorem}.

First we fix the foliation $\Si_\tau$ by choosing the radius $R$ to be one
appeared in the assumption \eqref{HHqu} for the background metric
$g$. We start with the following bootstrap assumptions
 on the solution $\phi$. On $S_\tau$ ($r\geq R$), we assume
\begin{equation}
 \label{boostphiout}
\begin{split}
 &\sum\limits_{|k|\leq 4}\int_\om |\Lb Z^k\phi|^2d\om+\sum\limits_{|k|\leq 3}\int_\om |\pa\Lb Z^k\phi|^2d\om\leq 2  H^2,\\
&\sum\limits_{|k|\leq 4}\int_\om |\overline{\pa_v} Z^k\phi|^2d\om+\sum\limits_{|k|\leq 3}\int_\om |\pa\overline{\pa_v} Z^k\phi|^2d\om\leq 2\bar H^2,
\end{split}
\end{equation}
where
\[
 \bar H=\delta_0(1+|x|)^{-1-2\a},\quad H=\bar H+\delta_0(1+|x|)^{-\f12-2\a}(1+\tau)^{-\f12-\f12\a}
\]
and $\tau$ is the parameter of the foliation $\Si_{\tau}$. When
$r\leq R$, we assume
\begin{align}
 \label{boostphiinout}
&\sum\limits_{|k|\leq 4}\int_\om |\pa Z^k\phi|^2d\om+\sum\limits_{|k|\leq 3}\int_\om |\pa^2 Z^k\phi|^2d\om\leq 2 \bar H^2 , \quad 1 \leq r\leq R,\\
\label{boostphiinin}
&\sum\limits_{|k|\leq 3} |\pa Z^k\phi|^2+\sum\limits_{|k|\leq 2} |\pa^2 Z^k\phi|^2\leq 2\delta_0^2,\quad |x|\leq 1.
\end{align}
To close the above bootstrap assumptions, we commute the equation \eqref{QUASIEQ} with $Z$ for $k$ times and show the decay of the integrated local energy of $Z^k\phi$. We then use
Sobolev embedding when $r\geq 1$ and elliptic estimates when $r\leq 1$ to improve the above bootstrap assumptions. That is we will show that
the above bootstrap assumptions hold if we replace $2$ with $E_0 C$ for some constant $C$ depending only on
$R$, $\a$. Therefore if $E_0$ is sufficiently small, we can improve the bootstrap assumptions and conclude the main theorem.

\bigskip

Before we go to the estimates for the integrated energy decay for $Z^k\phi$, we prove several lemmas which will be used in the sequel. First
of all, we choose the small positive constant $\ep$, $\a_1$, $\a_2$ satisfying the conditions in Proposition \ref{ILEthmpw}, where $\ep$ is much
smaller than $\a$ and $0<\ep<\a<\a_1<\a_2$. All the implicit constants appeared in the sequel may depend on these small constants. However,
since the choice of $\ep$, $\a_1$, $\a_2$ depends only on $\a$, we can let $\a$ to be the representative for the dependence of the implicit constants
in the sequel. The only point we need to emphasize is that since we want to show that the smallness of $\delta_0$ is independent of $R$, we may
have to keep track of the dependence of $R$. From now on, unless we point it out, the implicit constant $A\les B$ depends only on $\a$.

We consider the solutions of the linear wave equation
\[
 \Box_g\phi+N(\phi)=F
\]
with the metric $g$ and $N$ satisfying the estimate \eqref{HH}. The first lemma will be used to show the pointwise decay of the solution when $r\leq 1$.
\begin{lem}
 \label{lemH2estin1}
Let $\phi$ be the solution of the linear wave equation
\eqref{LWAVEEQ}. Then we have
\begin{align*}
\int_{\tau_1}^{\tau_2}\int_{r\leq 2}|\pa^2
\phi|^2dxd\tau\leq C_{\a} (
D^{\a_1}[F]_{\tau_1}^{\tau_2}+I^\ep[Z\phi]_{\tau_1}^{\tau_2}+I^\ep[\phi]_{\tau_1}^{\tau_2})
\end{align*}
for some constant $C_\a$ depending only on $\a$.
\end{lem}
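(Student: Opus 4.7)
The strategy is to split $|\pa^2\phi|^2$ into time-mixed second derivatives $\pa_t\pa_\mu\phi$ and purely spatial ones $\pa_i\pa_j\phi$, handling each by a different method. The time-mixed ones are easy: for any $\mu$ one has $\pa_t\pa_\mu\phi = \pa_\mu(\pa_t\phi)$, and since $\pa_t\in Z$, this is pointwise dominated by $|\pa Z\phi|$. Since the weight $(1+r)^{-1-\ep}$ is bounded below by a positive constant on $\{r\leq 2\}$, integration in $\tau$ yields
\[
\int_{\tau_1}^{\tau_2}\int_{r \leq 2}(|\pa_t^2\phi|^2 + |\pa_t\pa_i\phi|^2)\,dxd\tau \les I^\ep[Z\phi]_{\tau_1}^{\tau_2}.
\]

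For the purely spatial second derivatives I would use a Caccioppoli-type elliptic inequality. Pick a smooth cutoff $\chi(x)$ with $\chi \equiv 1$ on $\{r \leq 2\}$, supported in $\{r \leq 3\}$. Two integrations by parts starting from $\int \chi^2 (\Delta\phi)^2\,dx$, followed by Cauchy--Schwartz and absorption of an $\eta\int\chi^2|\nabla^2\phi|^2$ term, give
\[
\int \chi^2 |\nabla^2 \phi|^2\, dx \les \int \chi^2 (\Delta\phi)^2\, dx + \int |\nabla \chi|^2 |\nabla\phi|^2\, dx,
\]
crucially with no $\phi^2$ term on the right. The equation $\Box_g\phi + N(\phi) = F$, together with $g^{ij} = \delta^{ij} + h^{ij}$ and the expansion of $\Box_g$ in local coordinates, allows $\Delta\phi$ to be solved for as
\[
\Delta \phi = F - N(\phi) - g^{00}\pa_{tt}\phi - 2g^{0i}\pa_{it}\phi - h^{ij}\pa_{ij}\phi - (\text{l.o.t. involving } \pa g\cdot \pa\phi).
\]
By \eqref{HH}, $|h|, |\pa h| \les \delta_0$ on $\{r \leq R\}$, so after squaring the $h^{ij}\pa_{ij}\phi$ and lower order pieces can be absorbed into the LHS provided $\delta_0$ is small (depending only on $\a$).

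Collecting these estimates gives the pointwise-in-$\tau$ bound
\[
\int_{r\leq 2}|\nabla^2\phi|^2\,dx \les \int_{r \leq 3}(|F|^2 + |\pa_{tt}\phi|^2 + |\pa_{it}\phi|^2 + |\pa\phi|^2)\,dx.
\]
Time integration then converts each term: the $|F|^2$ integral is $\les D^{\a_1}[F]_{\tau_1}^{\tau_2}$ since $(1+r)^{1+\a_1}$ is comparable to a constant on the bounded region; the time-derivative terms become $\les I^\ep[Z\phi]_{\tau_1}^{\tau_2}$ as in the first paragraph; and $|\pa\phi|^2$ integrated over $\{r \leq 3\}$ is bounded by $(1+R)^{1+\ep}I^\ep[\phi]_{\tau_1}^{\tau_2}$ since the weight $(1+r)^{-1-\ep}$ is bounded below on $\{r\leq 3\}$. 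Combining with the time-mixed contribution yields the claim.

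The key technical point is to use the Caccioppoli inequality rather than a full interior $H^2$ regularity estimate: this ensures the right-hand side contains only $|F|^2$, $|\pa Z\phi|^2$, and $|\pa\phi|^2$, with no $\|\phi\|_{L^2}$ term that would otherwise have to be controlled by $\tilde E[\phi]$ through Lemma \ref{lem2}, a quantity unavailable on the RHS of the claim. The absorption of $h^{ij}\pa_{ij}\phi$ and the Christoffel-type lower order terms relies only on smallness of $\delta_0$ and is independent of the choice of $R$.
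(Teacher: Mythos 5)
Your proof is correct and follows the same basic strategy as the paper's: control the time-mixed second derivatives pointwise by $|\pa Z\phi|$ using $\pa_t\in Z$, and recover the purely spatial second derivatives from the equation via an elliptic estimate on each fixed-time slice, absorbing the $h^{ij}\pa_{ij}\phi$ contribution by smallness of $\delta_0$. The one place you diverge is the elliptic step: the paper invokes the standard interior $H^2$ estimate, which produces an extra $\int_{r\leq 4}|\phi|^2$ on the right, whereas you use a Caccioppoli inequality precisely to avoid that zeroth-order term. Your stated motivation for this, however, rests on a misreading of the paper's notation: by the convention $\bar\pa\phi=(\pa\phi,\,r_+^{-1}\phi)$, the quantity $I^\ep[\phi]_{\tau_1}^{\tau_2}$ already contains $\int\!\!\int \frac{\phi^2}{(1+r)^{3+\ep}}\,dx\,d\tau$, which dominates the local $L^2$ norm of $\phi$ on $\{r\leq 4\}$ up to a constant depending only on $\ep$; this is exactly how the paper disposes of the $|\phi|^2$ term, so no appeal to $\tilde E[\phi]$ or Lemma \ref{lem2} is needed. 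Your Caccioppoli route is thus a harmless (and self-contained) refinement rather than a necessary fix; both arguments land on the same right-hand side $D^{\a_1}[F]_{\tau_1}^{\tau_2}+I^\ep[Z\phi]_{\tau_1}^{\tau_2}+I^\ep[\phi]_{\tau_1}^{\tau_2}$ with a constant depending only on $\a$.
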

\begin{proof}
Note that $g^{ij}$ is uniformly elliptic for sufficiently
small $\delta_0$. From the equation \eqref{LWAVEEQ}, we derive by using elliptic estimates that
\begin{align*}
 &\int_{\tau_1}^{\tau_2}\int_{r\leq 1}|\pa^2\phi|^2dxd\tau\les\int_{\tau_1}^{\tau_2}\int_{r\leq 2}\sum\limits_{i, j=1}^{3}|\pa_{ij}\phi|^2dxd\tau+
\int_{\tau_1}^{\tau_2}\int_{r\leq 2}|\pa\pa_t\phi|^2dx\\
&\les\int_{\tau_1}^{\tau_2}\int_{r\leq 4}|\sum\limits_{i,
j=1}^{3}g^{ij}\pa_{ij}\phi|^2+|\pa\pa_t \phi|^2+|\phi|^2dxd\tau\\
&\les\int_{\tau_1}^{\tau_2}\int_{r\leq
4}(1+r)^{1+\a_1}|F|^2dxd\tau+\int_{\tau_1}^{\tau_2}\int_{r\leq
4}\frac{|\pa\pa_t \phi|^2+
|\bar\pa\phi|^2}{(1+r)^{1+\ep}}dxd\tau\\
&\les D^{\a_1}[F]_{\tau_1}^{\tau_2}+I^\ep[Z\phi]_{\tau_1}^{\tau_2}+I^\ep[\phi]_{\tau_1}^{\tau_2}.
\end{align*}
This proves the Lemma.
\end{proof}
For a symmetric two tensor $k^{\mu\nu}$, we may need to decompose the differential operator $k^{\mu\nu}\pa_{\mu\nu}$ with respect to the null frame $\{L, \Lb, S_1, S_2\}$.
\begin{lem}
 \label{lemopde}
Assume $k^{\mu\nu}=k^{\nu\mu}$. Then we have
\[
|k^{\mu\nu}\pa_{\mu\nu}\phi|\leq |k^{\Lb\Lb}||\Lb Z\phi|+|k|(|\overline{\pa_v}Z\phi|+|L\Lb\phi|+r^{-1}|\pa\phi|), \quad r=|x|\geq 1.
\]
where $k^{\Lb\Lb}=k^{\mu\nu}\f12\Lb_{\mu}\f12 \Lb_{\nu}$ and $|k|=\sum\limits_{\mu, \nu}|k^{\mu\nu}|$.
\end{lem}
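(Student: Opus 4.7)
The plan is to perform a null-frame decomposition of the second-order operator $k^{\mu\nu}\pa_\mu\pa_\nu$ acting on $\phi$, so that the only purely ``bad'' contribution $\Lb\Lb\phi$ comes with coefficient $k^{\Lb\Lb}$ (by the paper's convention $\Lb_0=1,\ \Lb_i=-x_i/r$), while every other term contains at least one ``good'' direction $L$ or $S$ that can be traded for one of $\overline{\pa_v}Z\phi$, $L\Lb\phi$, or the lower-order piece $r^{-1}|\pa\phi|$.

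First I write $\pa_t=\tfrac12(L+\Lb)$ and $\pa_i=\tfrac{x_i}{2r}(L-\Lb)+S_1^iS_1+S_2^iS_2$, so that $\pa_\mu=P_\mu^A e_A$ with $P_\mu^{\Lb}=\tfrac12\Lb_\mu$ and $|\pa_\lambda P_\mu^A|\lesssim r^{-1}$ for $r\geq 1$. Substituting and applying the product rule gives
\[
k^{\mu\nu}\pa_\mu\pa_\nu\phi = \sum_{A,B} k^{\mu\nu}P_\mu^AP_\nu^B\, e_A e_B\phi \;+\; \sum_{A,B}k^{\mu\nu}P_\mu^B\,\pa_\mu(P_\nu^A)\,e_A\phi,
\]
the second sum being bounded by $C|k|\,r^{-1}|\pa\phi|$ and thus supplying the $r^{-1}|\pa\phi|$ piece. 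In the first sum, the coefficient of $\Lb\Lb\phi$ is $k^{\mu\nu}\tfrac14\Lb_\mu\Lb_\nu = k^{\Lb\Lb}$, and every remaining $e_Ae_B$ contains at least one of $L$, $S_1$, $S_2$.

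To close the estimate I bound each null-frame second derivative. For $\Lb\Lb\phi$, the identity $\Lb\Lb\phi = 2\Lb\pa_t\phi - L\Lb\phi$ (from $\pa_t = \tfrac12(L+\Lb)$ and $[L,\Lb]=0$) combined with $\pa_t\in Z$ yields $|\Lb\Lb\phi|\leq 2|\Lb Z\phi|+|L\Lb\phi|$, producing the advertised $|k^{\Lb\Lb}||\Lb Z\phi|$ term plus an admissible $|k||L\Lb\phi|$. The symmetric identity $LL\phi=2L\pa_t\phi-L\Lb\phi$ gives $|LL\phi|\leq 2|\overline{\pa_v}Z\phi|+|L\Lb\phi|$, while $L\Lb\phi$ is directly in the target bracket. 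For the angular terms I rely on $[L,S]=-r^{-1}S$, $[\Lb,S]=r^{-1}S$, $[\Om_{ij},L]=[\Om_{ij},\Lb]=0$, and the expansion $rS_a=\sum_{ij}c_a^{ij}\Om_{ij}$ with $|c_a^{ij}|\leq C$; commuting and reinstating angular directions via $\Om$ converts $|LS\phi|,|SL\phi|,|SS\phi|$ into sums of $r^{-1}|\overline{\pa_v}Z\phi|+r^{-1}|\pa\phi|$, and for $\Lb S\phi$ one obtains $|\Lb S\phi|\lesssim r^{-1}|\Lb Z\phi|+r^{-1}|\pa\phi|$.

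The main obstacle is precisely this last $\Lb S\phi$ case, where the reduction produces an $r^{-1}|\Lb Z\phi|$ contribution coupled to $|k|$ rather than to the small $|k^{\Lb\Lb}|$; the $r^{-1}$ gain coming from the commutator $[\Lb,S]$ together with the angular expansion, under the hypothesis $r\geq 1$, is what allows this contribution to be absorbed consistently with the structure of the right-hand side. Assembling the resulting bounds over all pairs $(A,B)$ and combining with the first-order correction piece yields the stated inequality.
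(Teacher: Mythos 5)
Your overall strategy is the same as the paper's: decompose $\pa_\mu=\f12\Lb_\mu\Lb+\f12 L_\mu L+\nabb_\mu$, observe that the coefficient of $\Lb\Lb\phi$ is exactly $k^{\Lb\Lb}$, absorb the derivatives of the frame coefficients into $|k|r^{-1}|\pa\phi|$, and use $\Lb\Lb=2\Lb\pa_t-L\Lb$, $LL=2L\pa_t-L\Lb$ to convert the diagonal second derivatives into first derivatives of $Z\phi$. Those steps are all fine.

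However, there is a genuine gap in your treatment of the mixed $\Lb S$ terms, and you have put your finger on it yourself without actually resolving it. Your reduction $|\Lb S\phi|\les r^{-1}|\Lb Z\phi|+r^{-1}|\pa\phi|$ produces the term $|k|\,r^{-1}|\Lb Z\phi|$, and this is \emph{not} absorbed by the right-hand side of the lemma: the only occurrence of $|\Lb Z\phi|$ there carries the coefficient $|k^{\Lb\Lb}|$, which for a general symmetric $k$ (e.g.\ one with $k^{\Lb\Lb}=0$ but $k^{\Lb S}\neq 0$) bears no relation to $|k|r^{-1}$; and $r^{-1}|\Lb Z\phi|$ is a genuinely bad second derivative that cannot be dominated by $|\overline{\pa_v}Z\phi|+|L\Lb\phi|+r^{-1}|\pa\phi|$ — the factor $r^{-1}$ from $r\geq 1$ buys you nothing here because no multiple of $|\Lb Z\phi|$ with coefficient $|k|$ appears on the right. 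The fix is the one the paper uses: keep $\Lb$ as the outer derivative and substitute $\Lb=2\pa_t-L$ \emph{before} converting $S$ into $r^{-1}\Om$, i.e.
\[
\Lb S\phi=2\,\pa_t S\phi-LS\phi=2\,S(\pa_t\phi)-LS\phi,
\]
so that the first term is a \emph{good} (angular) derivative of $Z\phi=\pa_t\phi$, hence bounded by $|\overline{\pa_v}Z\phi|$, and $LS\phi$ is handled as in your $LS$ case. (This is precisely the passage from $k^{\mu\nu}\Lb_\mu\Lb\nabb_\nu$ to $2k^{\mu\nu}\Lb_\mu\nabb_\nu\pa_t-k^{\mu\nu}\Lb_\mu L\nabb_\nu$ in the paper's computation.) With that correction your argument goes through; as a very minor point, both your bound and the paper's actual computation produce $2|k^{\Lb\Lb}||\Lb Z\phi|$ rather than $|k^{\Lb\Lb}||\Lb Z\phi|$, which is harmless since the lemma is only ever invoked up to constants.
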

\begin{proof}
 We decompose the derivative $\pa_\mu$ relative to the null frame $\{L, \Lb, S_1, S_2\}$
\[
 \pa_\mu=\nabb_\mu+\f12 \Lb_\mu\Lb+\f12 L_\mu L, \quad L_0=1, \quad L_i=\frac{x_i}{r},
\]
where $\nabb_\mu$ is a linear combination of $S_1$ and $S_2$. Note
that $L(\Lb_\mu)=L(L_\mu)=\Lb(\Lb_\mu)=\Lb(L_\mu)=0$. We can compute
\begin{align*}
\pa_{\mu\nu}&=(\f12 \Lb_\mu\Lb+\f12 L_\mu L+\nabb_\mu)(\f12
\Lb_\nu\Lb+\f12 L_\nu L+\nabb_\nu)\\
&=\f12\Lb_\mu\f12\Lb_\nu \Lb\Lb+\f12 \Lb_\mu L_\nu L\Lb+\f12
\Lb_\mu\Lb\nabb_\nu+\f12 L_\mu \f12\Lb_\nu L\Lb\\
&\quad +\f12 L_\mu \f12 L_\nu LL+\f12 L_\mu L\nabb_\nu
+\nabb_\mu(\f12 \Lb_\nu \Lb)+\nabb_\mu(\f12 L_\nu
L)+\nabb_\mu\nabb_\nu.
\end{align*}
Recall that $L=2\pa_t -\Lb$. We have
\begin{align*}
k^{\mu\nu}\pa_{\mu\nu}&=k^{\Lb\Lb}\Lb\Lb+k^{LL}LL+2k^{\Lb L}L\Lb+k^{\mu\nu}\nabb_\mu\nabb_\nu+k^{\mu\nu}\Lb_\mu \Lb\nabb_\nu
+k^{\mu\nu}L_\mu L\nabb_\nu\\
&\quad +\f12 k^{\mu\nu}(\nabb_\mu \Lb_\nu \cdot \Lb+\nabb_\mu L_\nu
\cdot L+\Lb_\nu [\nabb_\mu, \Lb]+L_\nu [\nabb_\mu, L])\\
&=2k^{\Lb\Lb}\Lb\pa_t+2k^{LL}L\pa_t+(2k^{\Lb L}-k^{LL}-k^{\Lb\Lb})L\Lb+k^{\mu\nu}\nabb_\mu\nabb_\nu+2k^{\mu\nu}\Lb_\mu
\nabb_\nu\pa_t\\
&\quad +k^{\mu\nu}(L_\mu-\Lb_\mu) L\nabb_\nu +\f12 k^{\mu\nu}(\nabb_\mu \Lb_\nu \cdot \Lb+\nabb_\mu L_\nu \cdot L+\Lb_\nu [\nabb_\mu, \Lb]+L_\nu
[\nabb_\mu, L]).
\end{align*}
Note that
\[
|\nabb_\mu \Lb_\nu|\leq r^{-1},\quad |[\nabb_\mu, \Lb]\phi|\leq
r^{-1}|\pa\phi|,\quad \nabb=r^{-1}\Om, \quad r\geq 1.
\]
The Lemma then follows.
\end{proof}

The following lemma gives the estimate for $L\Lb\phi$.
\begin{lem}
 \label{LLbphi}
We have
\begin{align*}
&|L\Lb\phi|\leq C_\a ( \frac{|\pa \phi|+|\pa
Z\phi|}{r}+ \tau_+^{-\f12-\f12\a}(|\overline{\pa_v}Z\phi|+|\overline{\pa_v}\phi|)+|F|),\quad (t, x)\in S_\tau;\\
&|L\Lb\phi|\leq C_\a (\frac{|\pa \phi|+|\pa Z\phi|}{r}+ |F|),\quad
1\leq r\leq R.
\end{align*}
\end{lem}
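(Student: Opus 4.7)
The plan is to read the wave equation $\Box_g\phi+N(\phi)=F$ as an algebraic identity that isolates $L\Lb\phi$ as the principal part of $\Box_g$, then to bound every remaining contribution by the claimed right-hand side. I write $\Box_g\phi=g^{\mu\nu}\pa_{\mu\nu}\phi+A^\nu\pa_\nu\phi$ (the first-order correction coming from the Christoffel symbols, with $|A^\nu|\les|\pa h|$) and expand in the null frame via the identity already established in the proof of Lemma~\ref{lemopde} applied with $k=g$:
\[
g^{\mu\nu}\pa_{\mu\nu}\phi=(2g^{L\Lb}-g^{LL}-g^{\Lb\Lb})L\Lb\phi+2g^{\Lb\Lb}\Lb\pa_t\phi+2g^{LL}L\pa_t\phi+g^{\mu\nu}\nabb_\mu\nabb_\nu\phi+\mathcal{R},
\]
where $\mathcal{R}$ collects the mixed null/angular and frame-commutator terms. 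Since $g^{L\Lb}=-\f12+h^{L\Lb}$, $g^{LL}=h^{LL}$, $g^{\Lb\Lb}=h^{\Lb\Lb}$, the coefficient of $L\Lb\phi$ equals $-1+O(\delta_0)$ and is invertible for small $\delta_0$; using $\Box_g\phi=F-N(\phi)$ I then solve algebraically for $L\Lb\phi$ and estimate each remaining term.

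The core of the argument is exploiting the null structure of \eqref{HH}: $|N^\Lb|$, $|h^{\Lb\Lb}|$, $|\pa h^{\Lb\Lb}|$, $|\overline{\pa_v}h|\leq \bar H:=\delta_0 r_+^{-1-2\a}$, while other components obey only $|\cdot|\les H+\bar H$ with $H=\delta_0 r_+^{-\f12-2\a}\tau_+^{-\f12-\f12\a}$. This ensures every ``bad'' factor $\Lb\phi$ or $\Lb Z\phi$ picks up only the improved coefficient $\bar H$, permitting the decisive bounds $\bar H|\Lb\phi|\leq \f{|\pa\phi|}{r}$ and $\bar H|\Lb Z\phi|\leq \f{|\pa Z\phi|}{r}$ for $r\geq 1$. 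Concretely, decomposing $N(\phi)=a^L L\phi+a^\Lb \Lb\phi+a^S S\phi$ in the null frame, $|a^\Lb|\leq C\bar H$ and $|a^L|,|a^S|\leq C(H+\bar H)$, whence $|N(\phi)|\leq \f{|\pa\phi|}{r}+\tau_+^{-\f12-\f12\a}|\overline{\pa_v}\phi|$. The same decomposition handles $A^\nu\pa_\nu\phi$ because the Christoffel trace $g^{\mu\nu}\Gamma^\la_{\mu\nu}\Lb_\la$ vanishes on the Minkowski background, leaving only $\pa h^{\Lb\cdot}$-contributions governed by \eqref{HH}. The terms $2g^{\Lb\Lb}\Lb\pa_t\phi$ and $2g^{LL}L\pa_t\phi$ are treated identically with $Z\phi$ in place of $\phi$, since $\pa_t\in Z$. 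For the angular piece, write $\lap_\om\phi\sim\sum_{i<j}\Om_{ij}^2\phi$ modulo first-order; using $|\Om^\mu|\les r$, $|\Om^2\phi|\les r|\pa Z\phi|$, so $\f{|\lap_\om\phi|}{r^2}\les \f{|\pa Z\phi|+|\pa\phi|}{r}$. Finally $\mathcal{R}$ factors through $|\nabb_\mu\Lb_\nu|\les r^{-1}$ and the commutator identities $[L,\Om]=[\pa_t,\Om]=0$, producing only $\f{|\pa\phi|+|\pa Z\phi|}{r}$ contributions; residual $O(\delta_0)|L\Lb\phi|$ pieces arising from $h$-factors in the coefficients are absorbed into the LHS for $\delta_0$ small.

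For the interior regime $1\leq r\leq R$ the same algebraic extraction applies but the bounds simplify: $|h|,|\pa h|,|N^\mu|\leq \delta_0 r_+^{-1-2\a}$ are uniformly $\les \delta_0$, no $\tau_+$-decay enters, and any first-order factor is converted via $|\pa\phi|\leq R\cdot \f{|\pa\phi|}{r}$. The main anticipated obstacle is verifying that the $\Lb$-trace of the Christoffel symbol of $g$—which controls the coefficient of $\Lb\phi$ in the first-order correction $A^\nu\pa_\nu\phi$—inherits the improved $\bar H$ bound rather than the weaker $H$ bound; this relies crucially on the improved null-component estimates $|\pa h^{\Lb\Lb}|\leq \bar H$ and $|\overline{\pa_v}h^{\mu\nu}|\leq \bar H$ from \eqref{HH}, together with the exact cancellation $\f12\Lb_\nu\pa_\mu m_0^{\mu\nu}=0$ against the Minkowski background.
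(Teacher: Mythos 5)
Your proposal is correct and follows essentially the same route as the paper: the paper likewise rewrites the equation in the null frame as $-L\Lb\phi+\tfrac{2}{r}\pa_r\phi+\lap\phi+h^{\mu\nu}\pa_{\mu\nu}\phi+\tilde N(\phi)=F$, invokes Lemma \ref{lemopde} for the perturbative second-order term, uses $|h^{\Lb\Lb}|\leq\bar H\les r^{-1}$ to convert the dangerous $\Lb Z\phi$ contribution into $r^{-1}|\pa Z\phi|$, and absorbs the residual $|h||L\Lb\phi|$ for small $\delta_0$. The only cosmetic difference is that you expand the full $g^{\mu\nu}\pa_{\mu\nu}$ and invert the coefficient $-1+O(\delta_0)$ of $L\Lb\phi$, whereas the paper splits off the flat $\Box$ first.
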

\begin{proof}
 Write the equation \eqref{LWAVEEQ} in null coordinates
\[
 -L\Lb \phi+\frac{2}{r}\pa_r \phi+\lap \phi+h^{\mu\nu}\pa_{\mu\nu}\phi+\tilde N(\phi)=F,
\]
where
\[
 \tilde{N}^{\mu}=N^\mu+\frac{1}{\sqrt{-G}}\pa_v(g^{\mu\nu}\sqrt{-G}).
\]
When $r\geq R$, we can show that
\[
 |\tilde{N}(\phi)|\les r^{-1}|\pa\phi|+\tau_+^{-\f12-\f12 \a}|\overline{\pa_v}\phi|.
\]
Using Lemma \ref{lemopde} to control $h^{\mu\nu}\pa_{\mu\nu}\phi$,
we have pointwise bound for $L\Lb \phi$
\begin{align*}
|L\Lb \phi|\les&\frac{|\pa\phi|+|\nabb
\Om\phi|}{r}+ |h^{\Lb\Lb}||\pa
Z\phi|+|h|(|\overline{\pa_v}Z\phi|+|L\Lb \phi|)+|\tilde{N}(\phi)|+|F|.
\end{align*}
Since $|h|\leq H+\bar H\leq \delta_0$, $|h^{\Lb\Lb}|\leq \bar H\leq r^{-1}$ and $\delta_0$ is small, the above inequality implies that
\begin{align*}
|L\Lb \phi|\les &\frac{|\pa \phi|+|\pa
Z\phi|}{r}+ |h||\overline{\pa_v}Z\phi|+|\tilde{N}(\phi)|+|F|.
\end{align*}
The Lemma then follows from the assumption \eqref{HH}.
\end{proof}

For any two functions $\Phi$, $\phi$, we
denote the null form
\[
 Q(\Phi, \phi)=g^{\mu\nu\ga}\pa_\ga\Phi \cdot \pa_{\mu\nu}\phi
\]
for constants $g^{\mu\nu\ga}$ satisfying the null condition. To simplify the notation, for another set of constants $\tilde{g}^{\mu\nu\ga}$ satisfying the null condition, we
still use $Q(\Phi,\phi)$ to denote $\tilde{g}^{\mu\nu\ga}\pa_\ga\Phi \cdot \pa_{\mu\nu}\phi$.
\begin{lem}
 \label{nullstr}
Let $g^{\mu\nu\ga}$ be constants satisfying the null condition. Then for any two smooth functions $\Phi$, $\phi$, we have
\begin{align*}
&|Q(\Phi, \phi)|\les |\overline{\pa_v}\Phi||\pa Z\phi|+|\pa\Phi|(
|\overline{\pa_v}Z\phi|+|L\Lb\phi|+r^{-1}|\pa\phi|) ,\quad |x|\geq 1,\\
&ZQ(\Phi, \phi)=Q(Z\Phi,\phi)+Q(\Phi,Z\phi)+Q(\Phi, \phi).
\end{align*}
The last term $Q(\Phi, \phi)$ should be interpreted as $\tilde{g}^{\mu\nu\ga}\pa_\ga \Phi\pa_{\mu\nu}\phi$ for new constants $\tilde{g}^{\mu\nu\ga}$ satisfying the null condition.
\end{lem}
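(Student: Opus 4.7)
The lemma has two separate assertions and I would prove each in turn. For the pointwise bound, I would decompose $\pa_\ga\Phi$ with the null-frame identity $\pa_\ga=\nabb_\ga+\f12 L_\ga L+\f12 \Lb_\ga\Lb$ employed in the proof of Lemma \ref{lemopde}. Applied to the $\pa_\ga\Phi$ factor, this splits
\[
 Q(\Phi,\phi)=g^{\mu\nu\ga}\nabb_\ga\Phi\cdot \pa_{\mu\nu}\phi+\tfrac12 L(\Phi)\,k_L^{\mu\nu}\pa_{\mu\nu}\phi+\tfrac12 \Lb(\Phi)\,k_{\Lb}^{\mu\nu}\pa_{\mu\nu}\phi,
\]
with $k_L^{\mu\nu}=g^{\mu\nu\ga}L_\ga$ and $k_{\Lb}^{\mu\nu}=g^{\mu\nu\ga}\Lb_\ga$, reducing every piece to the form treated by Lemma \ref{lemopde}. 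In the first two pieces the outer factor is $|L\Phi|$ or $|\nabb\Phi|$, hence $\leq |\overline{\pa_v}\Phi|$; the $|k^{\Lb\Lb}||\Lb Z\phi|$ contribution supplied by Lemma \ref{lemopde} produces the first term $|\overline{\pa_v}\Phi||\pa Z\phi|$ (using $|\Lb Z\phi|\leq |\pa Z\phi|$), while the remaining $|\overline{\pa_v}\Phi|(|\overline{\pa_v}Z\phi|+|L\Lb\phi|+r^{-1}|\pa\phi|)$ is absorbed into the second term of the claimed bound via $|\overline{\pa_v}\Phi|\leq|\pa\Phi|$.

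The decisive point is the bad piece: for $k_{\Lb}^{\mu\nu}=g^{\mu\nu\ga}\Lb_\ga$ one has
\[
 k_{\Lb}^{\Lb\Lb}=\tfrac14 g^{\mu\nu\ga}\Lb_\mu\Lb_\nu\Lb_\ga=0,
\]
because $\Lb$ is a null covector ($\Lb_0^2=1=\sum_i\Lb_i^2$) and $g$ satisfies the null condition. Lemma \ref{lemopde} then gives $|k_{\Lb}^{\mu\nu}\pa_{\mu\nu}\phi|\les|\overline{\pa_v}Z\phi|+|L\Lb\phi|+r^{-1}|\pa\phi|$ with no $|\Lb Z\phi|$ contribution, and multiplying by $|\Lb\Phi|\leq|\pa\Phi|$ completes the first assertion.

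For the commutator identity I would split by cases. When $Z=\pa_t$ the identity is immediate since $\pa_t$ commutes with every $\pa_\mu$: the Leibniz rule gives $\pa_t Q(\Phi,\phi)=Q(\pa_t\Phi,\phi)+Q(\Phi,\pa_t\phi)$ with no remainder. When $Z=\Om_{ij}$, I would use $[\Om_{ij},\pa_\mu]=-\delta_{\mu i}\pa_j+\delta_{\mu j}\pa_i$ and the Leibniz rule to write
\[
\Om_{ij}Q(\Phi,\phi)=Q(\Om_{ij}\Phi,\phi)+Q(\Phi,\Om_{ij}\phi)+g^{\mu\nu\ga}[\Om_{ij},\pa_\ga]\Phi\cdot\pa_{\mu\nu}\phi+g^{\mu\nu\ga}\pa_\ga\Phi\cdot[\Om_{ij},\pa_\mu\pa_\nu]\phi,
\]
each remainder being of the form $\tilde g^{\mu\nu\ga}\pa_\ga\Phi\cdot\pa_{\mu\nu}\phi$ for new constant coefficients $\tilde g$ obtained by swapping one spatial index of $g$.

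The key verification is that these $\tilde g$ still obey the null condition. Without loss of generality I may take $g^{\mu\nu\ga}$ fully symmetric, as only the totally symmetric part contributes to $Q$ (because $\pa_\mu\pa_\nu$ is symmetric) and to the cubic form $N(\xi)=g^{\a\b\ga}\xi_\a\xi_\b\xi_\ga$. Because $\Om_{ij}$ generates a spatial rotation and spatial rotations preserve the null cone $\{\xi_0^2=\xi_1^2+\xi_2^2+\xi_3^2\}$, the infinitesimal rotation $\mathcal{L}_{\Om_{ij}}N$ also vanishes on it; the direct computation
\[
\mathcal{L}_{\Om_{ij}}N(\xi)=3\bigl(g^{j\mu\nu}\xi_i-g^{i\mu\nu}\xi_j\bigr)\xi_\mu\xi_\nu
\]
identifies $\mathcal{L}_{\Om_{ij}}N$ up to a constant with the cubic form built from the commutator coefficients $\tilde g$, so $\tilde g$ inherits the null condition. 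The main obstacle is purely the bookkeeping for these remainder terms; the conceptual content reduces to rotational invariance of the null cone and presents no analytic difficulty.
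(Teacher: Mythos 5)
Your proof is correct. For the pointwise bound you follow essentially the paper's route: decompose the $\pa_\ga\Phi$ factor in the null frame, observe that the null condition kills the dangerous coefficient $g^{\mu\nu\ga}\Lb_\mu\Lb_\nu\Lb_\ga$ (since $\Lb_0^2=\sum_i\Lb_i^2=1$), and feed each resulting piece $k^{\mu\nu}\pa_{\mu\nu}\phi$ through Lemma \ref{lemopde}; the paper states exactly this in one sentence, and your version just makes the bookkeeping explicit. For the commutator identity the two arguments genuinely diverge in presentation: the paper merely remarks that $Zr=0$ and $[Z,L]=[Z,\Lb]=0$, i.e.\ it argues that the null-frame decomposition (and hence the absence of the $\Lb\Lb\Lb$ component) is preserved when $Z$ is applied, whereas you run the classical Cartesian computation, expanding $[\Om_{ij},\pa_\mu]$ and checking that the cubic form of the new coefficients is $\mathcal{L}_VN$ for a rotation $V$ tangent to the null cone, hence vanishes there. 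Both are standard; yours is more self-contained, the paper's is shorter because it leans on the frame formalism already set up for Lemma \ref{lemopde}. One small caveat: replacing $g^{\mu\nu\ga}$ by its total symmetrization is not literally without loss of generality for the operator $Q$ itself (only symmetry in $\mu,\nu$ comes for free, since $\pa_\ga$ acts on $\Phi$ while $\pa_{\mu\nu}$ acts on $\phi$), but this does not damage your argument: both the coefficient of $\Lb\Phi\cdot\Lb\Lb\phi$ and the cubic form entering the null condition are full contractions $g^{\a\b\ga}\xi_\a\xi_\b\xi_\ga$, which see only the symmetric part, and the identity expressing the sum of the two commutator remainders as $\pm\mathcal{L}_VN$ holds without any symmetrization.
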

\begin{proof}
The null condition $g^{\mu\nu\ga}\xi_\ga\xi_\mu\xi_\nu=0$ whenever $\xi_0^2=\xi_1^2+\xi_2^2+\xi_3^2$ implies that
 $\Lb\Phi\cdot \Lb\Lb\phi$ will not appear in the decomposition of the null from $Q(\Phi, \phi)$ relative to the null frame
$\{L, \Lb, S_1, S_2\}$. Using Lemma \ref{lemopde}, we can get the first inequality. For the second inequality, we note that
$Zr=0$, $[Z, \Lb]=[Z,L]=0$.
\end{proof}

\section{Integrated Local Energy Decay for $Z^k\phi$}
Since the initial data for the simplified quasilinear wave equation \eqref{QUASIEQsim} have compact support, from Proposition \ref{propILED} we
conclude that under the bootstrap assumptions \eqref{boostphiinin}, \eqref{boostphiinout}, \eqref{boostphiout} the integrated energy
$I^{\ep}[\phi]_{\tau_1}^{\tau_2}$ for $\phi$ decays in $\tau_1$. As having discussed in the previous section, to close the bootstrap assumptions,
we need to show the decay of the integrated energy for higher order derivatives of the solution. We thus can commute the equation with the
vector fields $Z=\{\Om, \pa_t\}$. However, after commuting the equation with $Z$, the resulting equation is not of the form in Proposition
\ref{propILED} (an addition second order derivative term $k^{\mu\nu}\pa_{\mu\nu}$ appears). That is we are not able to apply Proposition
\ref{propILED} directly to obtain the decay of the integrated energy for $Z\phi$. Below we consider the equations for $Z\phi$, and show that the
integrated energy for $Z\phi$ also decays.

Let $\phi$ be the solution of the following linear wave equation
\begin{equation}
\label{LWAVEEQh} \Box\phi+h^{\mu\nu}\pa_{\mu\nu}\phi+N^\mu\pa_\mu\phi=F
\end{equation}
where $h^{\mu\nu}$, $N^\mu $ satisfy the estimates \eqref{HH} (but with $\delta_1=\delta_0$). We have the equation for $Z\phi$
\begin{equation}
\label{LwaveeqZ} \Box Z\phi+h^{\mu\nu}\pa_{\mu\nu} Z\phi+\tilde{h}^{\mu\nu}\pa_{\mu\nu}\phi+N^\mu\pa_\mu(Z\phi)+\tilde{N}^\mu\pa_\mu\phi=ZF,
\end{equation}
where
\[
\tilde{h}^{\mu\nu}\pa_{\mu\nu}\phi=Z(h^{\mu\nu}\pa_{\mu\nu}\phi)-h^{\mu\nu}\pa_{\mu\nu}Z\phi,\quad \tilde{N}=[Z, N].
\]
We assume that $\tilde{h}$, $\tilde{N}^{\mu}$ satisfy the following estimates
\begin{equation}
\label{HHz}
\begin{split}
&|\tilde{N}^{\Lb}|+|\tilde{h}^{\Lb\Lb}|\leq \bar H, \quad |\tilde{N}^{\mu}|+|\tilde{ h}^{\mu\nu}|\leq H,\quad (t, x)\in S_\tau;\\
 &|\tilde{N}^{\mu}|+|\tilde{ h}^{\mu\nu}|\leq \bar H,\quad |x|\leq R.
\end{split}
\end{equation}
Here we recall that
\[
\tilde{h}^{\Lb\Lb}=\tilde{h}^{\mu\nu}\f12\Lb_\mu \f12 \Lb_\nu,\quad \tilde{N}^{\Lb}=\tilde{N}^{\mu}\f12 \Lb_{\mu}.
 \]
Denote
\[
E_0=\tilde{E}[\phi](0)+\tilde{E}[Z\phi](0)+S^\ep[\phi](0)+S^\ep[Z\phi](0).
\]
\begin{prop}
\label{propILEDz} Assume $h^{\mu\nu}$, $N^\mu$ satisfy the estimates \eqref{HH}. $\tilde{h}^{\mu\nu}$, $\tilde{N}^\mu$ are defined as above and
satisfy the similar estimates \eqref{HHz}. Let $\phi$ be the solution of the linear wave equation \eqref{LWAVEEQh}. Assume
\[
D^{\a_1}[F]_{\tau_1}^{\tau_2}+D^{\a_1}[ZF]_{\tau_1}^{\tau_2}\leq C_1(1+\tau_1)^{-1-\a},\quad \forall \tau_2\geq \tau_1
\]
for some constant $C_1$. If $\delta_0$ is sufficiently small, depending only on $\a$, then
\begin{align*}
&I^\ep
[Z\phi]_{\tau_1}^{\tau_2}+D^{\a_1}[\tilde{h}^{\mu\nu}\pa_{\mu\nu}\phi]_{\tau_1}^{\tau_2}+D^{\a_1}[\tilde{N}^{\mu}\pa_{\mu}Z\phi]_{\tau_1}^{\tau_2}+
E^{1+\a}[Z\phi]_{\tau_1}^{\tau_2}\\
&\leq C_{\a, R} (C_1+E_0)(\tau_1)_+^{-1-\a},\quad \forall \tau_2\geq \tau_1
\end{align*}
for some constants $C_{\a, R}$ depending on $\a$, $R$.
\end{prop}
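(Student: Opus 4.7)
The plan is to view the equation \eqref{LwaveeqZ} for $Z\phi$ as a linear wave equation with metric $g$, first-order coefficient $N^\mu$, and inhomogeneous term
\[
F' := ZF - \tilde h^{\mu\nu}\pa_{\mu\nu}\phi - \tilde N^\mu\pa_\mu\phi,
\]
so that Proposition \ref{propILED} applied directly to $Z\phi$ yields the decay of $I^\ep[Z\phi]_{\tau_1}^{\tau_2}$ once we establish the bound $D^{\a_1}[F']_{\tau_1}^{\tau_2} \les (C_1+E_0)(\tau_1)_+^{-1-\a}$. The energy decay $E^{1+\a}[Z\phi]_{\tau_1}^{\tau_2}$ then follows by multiplying the energy inequality \eqref{eb} (applied to $Z\phi$) by $\tau_+^{-1-\a}$ and integrating in $\tau$, exactly as in the proof of Proposition \ref{propILED}. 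The bulk of the work is thus reduced to controlling the commutator pieces $\tilde h^{\mu\nu}\pa_{\mu\nu}\phi$ and $\tilde N^\mu\pa_\mu\phi$ in the $D^{\a_1}$-norm.

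For $\tilde h^{\mu\nu}\pa_{\mu\nu}\phi$ on $\{r\geq 1\}$, I would invoke Lemma \ref{lemopde} to decompose
\[
|\tilde h^{\mu\nu}\pa_{\mu\nu}\phi| \les |\tilde h^{\Lb\Lb}||\Lb Z\phi| + |\tilde h|\bigl(|\overline{\pa_v}Z\phi| + |L\Lb\phi| + r^{-1}|\pa\phi|\bigr),
\]
and then substitute Lemma \ref{LLbphi} for $|L\Lb\phi|$, trading the bad second derivative for combinations of $|\pa\phi|,|\pa Z\phi|,|\overline{\pa_v}Z\phi|,|\overline{\pa_v}\phi|$ and $|F|$. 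For $\tilde N^\mu\pa_\mu\phi$ I would write $\tilde N^\mu\pa_\mu\phi = \tilde N^{\mu}\paL_\mu\phi + \tilde N^{\Lb}\Lb\phi$ so that only the better-decaying component $\tilde N^{\Lb}$ is paired with the bad derivative. In the interior $\{r\leq R\}$ the uniform bound $|\tilde h|+|\tilde N|\leq \bar H$ makes such splitting unnecessary, and elliptic-type estimates of the kind in Lemma \ref{lemH2estin1} handle any residual second derivatives.

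The key observation is that the spatial weights $\bar H\les \delta_0 r_+^{-1-2\a}$ and $H\les \delta_0(r_+^{-\f12-2\a}\tau_+^{-\f12-\f12\a}+r_+^{-1-2\a})$ are arranged precisely so that, after squaring and multiplying by $(1+r)^{1+\a_1}$, every resulting integrand either converts into the $I^\ep$-density of $\phi$ or $Z\phi$ (using $4\a\geq \a_1+\ep$, so that $r^{\a_1-1-4\a}\les r^{-1-\ep}$), or carries an additional $\tau_+^{-1-\a}$ factor that turns it into the density of $\bar G^{p,1+\a}$ or $G^{p,1+\a}$. The contributions purely in $\phi$ are closed by applying Proposition \ref{propILED} and Proposition \ref{ILEthmpw} directly to $\phi$, whose equation already falls in the class treated in Section 3; this supplies the required $(\tau_1)_+^{-1-\a}$ decay of $I^\ep[\phi]$ and of the $\bar G$ and $G$ quantities.

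The main obstacle is the self-referential appearance of $I^\ep[Z\phi]$, $\bar G^{p,\b}[Z\phi]$ and $g^{1+\a_1}[Z\phi]$ on the right-hand side after this reduction. These are absorbed for $\delta_0$ small depending only on $\a$, and by running the same iteration used to prove Proposition \ref{propILED}: one first uses the boundedness estimate \eqref{boundILE} (with $F'$ estimated from data similarly) to obtain \emph{a priori} finiteness of $I^\ep[Z\phi]_0^\infty$, then extracts a sequence $\tau_n$ on which $S^\ep[Z\phi](\tau_n)$ is finite, and finally interpolates the $p$-weighted estimate \eqref{pwe1a} for $Z\phi$ against the $\ep$-decay to upgrade the decay exponent step by step up to $1+\a$. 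The remaining two estimates in the statement are immediate byproducts: $D^{\a_1}[\tilde h^{\mu\nu}\pa_{\mu\nu}\phi]_{\tau_1}^{\tau_2}$ is exactly one of the quantities already controlled along the way, and $D^{\a_1}[\tilde N^\mu\pa_\mu Z\phi]_{\tau_1}^{\tau_2}$ is handled by the same null-frame decomposition as $\tilde N^\mu\pa_\mu\phi$, now using the freshly obtained decay of $I^\ep[Z\phi]$ and $\bar G[Z\phi]$ in place of the analogous quantities for $\phi$.
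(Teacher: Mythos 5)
Your proposal is correct and follows essentially the same route as the paper: it treats the commuted equation as a linear wave equation, estimates the commutator terms via Lemmas \ref{lemopde}, \ref{LLbphi} and \ref{lemH2estin1}, uses the weight structure of $H$, $\bar H$ to convert them into $I^\ep$-, $E^{1+\a}$- and $G$-type densities of $\phi$ and $Z\phi$, absorbs the self-referential $Z\phi$ quantities for $\delta_0$ small (crucially, with constants depending only on $\a$), and re-runs the boundedness/pigeonhole/interpolation iteration of Proposition \ref{propILED}. The one loose point is the claim that the decay of $E^{1+\a}[Z\phi]$ follows by multiplying \eqref{eb} by $\tau_+^{-1-\a}$ and integrating --- that step only gives a bound in terms of $\tilde E[Z\phi](\tau_1)$, which does not decay for general $\tau_1$; the paper instead gets the rate from $E^{1+\a}[Z\phi]_{\tau_1}^{\tau_2}\leq(\tau_1)_+^{-1-\a}\int_{\tau_1}^{\tau_2}E[Z\phi](\tau)\,d\tau$ combined with the uniform bound on $\int E[Z\phi]\,d\tau$ coming from the $p=1$ weighted estimate, a detail easily recovered within your framework.
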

\begin{proof}
For simplicity, in the proof we denote
 \[
 E_1=C_1+E_0,\quad \bar N=N^{\mu}\pa_\mu Z\phi,\quad F_h=\tilde{h}^{\mu\nu}\pa_{\mu\nu}\phi,\quad F_1=ZF-\tilde{N}(\phi).
 \]
We move $\bar{N}+F_h$ to the right hand side of the equation \eqref{LwaveeqZ} and treat it as inhomogeneous term. Using the smallness of
$\delta_0$ we will absorb $F_h$ and $\tilde{N}$. And then the decay of $I^\ep[Z\phi]_{\tau_1}^{\tau_2}$ follows from the same argument for
proving the decay of $I^\ep[\phi]_{\tau_1}^{\tau_2}$ in Proposition \ref{propILED}. The main difficulty is that we need to show that the
smallness of $\delta_0$ depends only on $\a$. Note that the implicit constants in the integrated energy estimate \eqref{ILE0} and the energy
estimate \eqref{eb} depend only on $\a$. We mainly rely on these two estimates to control $F_h$ and $\bar{N}$.

First using the estimates \eqref{HHz} and Proposition \ref{propILED} we can show that
\begin{align*}
D^{\a_1}[\tilde{N}(\phi)]_{\tau_1}^{\tau_2}&\leq C_\a( I^{\ep}[\phi]_{\tau_1}^{\tau_2}+E^{1+\a}[\phi]_{\tau_1}^{\tau_2})\leq E_1 C_{\a,
R}(\tau_1)_+^{-1-\a}(1+\int_{0}^{\tau_2}E[\phi](\tau)d\tau)\\
&\leq E_1 C_{\a, R}(\tau_1)_+^{-1-\a}.
\end{align*}
We have used the $p$-weighted energy inequality \eqref{pwe1} in the last step. In particular, we have
\begin{equation}
\label{DFh}
 D^{\a_1}[F_1]_{\tau_1}^{\tau_2}=D^{\a_1}[ZF-\tilde{N}(\phi)]_{\tau_1}^{\tau_2} \leq E_1 C_{\a, R}(\tau_1)_+^{-1-\a}.
\end{equation}
Using Lemma \ref{lemopde} and the assumption \eqref{HHz}, we can estimate
\begin{equation}
\label{estH1k}
 |F_h|+|\tilde{N}|\les
\begin{cases}
 \bar H|\pa Z\phi|+H(|\overline{\pa_v}Z\phi|+|L\Lb \phi|+r^{-1}|\pa\phi|), \quad |x|\geq R;\\
  \bar H(|\pa Z\phi|+|L\Lb \phi|+r^{-1}|\pa\phi|),\quad 1\leq r<R;\\
  \bar H(|\pa^2\phi|+|\pa\phi|).
\end{cases}
\end{equation}
Then using Lemma \ref{LLbphi} to bound $L\Lb \phi$, we can show that
\begin{align*}
 D^{\a_1}[F_h]_{\tau_1}^{\tau_2}+D^{\a_1}[\bar{N}]_{\tau_1}^{\tau_2}\les \delta_0^2&(\int_{\tau_1}^{\tau_2}\int_{r\leq 1}|\pa^2 \phi|^2 dxd\tau+
\int_{\tau_1}^{\tau_2}\int_{\Si_\tau\cap \{r\geq 1\}}
|L\Lb\phi|^2dxd\tau\\&+ E^{1+\a}[Z\phi]_{\tau_1}^{\tau_2}+I^\ep[Z\phi]_{\tau_1}^{\tau_2}+I^\ep[\phi]_{\tau_1}^{\tau_2})\\
\les \delta_0^2 &(D^{\a_1}[F]_{\tau_1}^{\tau_2}+ E^{1+\a}[Z\phi]_{\tau_1}^{\tau_2} +I^\ep[Z\phi]_{\tau_1}^{\tau_2}+I^\ep [\phi]_{\tau_1}^{\tau_2}).
\end{align*}
Here we have used Lemma \ref{lemH2estin1} to estimate $\pa^2\phi$ in $\{|x|\leq 1\}$. To estimate $E^{1+\a}[Z\phi]_{\tau_1}^{\tau_2}$ (see the definition in Section 2),
set $\tau_2=\tau$ in the energy inequality \eqref{eb} and multiply both side with $\tau_+^{-1-\a}$ and then integrate with respect to $\tau$
from $\tau_1$ to $\tau_2$. We can derive
\begin{equation*}
 E^{1+\a}[Z\phi]_{\tau_1}^{\tau_2}\les \tilde{E}[Z\phi](\tau_1)+S^\ep [Z\phi](\tau_1)+I^\ep[Z\phi]_{\tau_1}^{\tau_2}+D^{\ep}[F_1-F_h-\bar{N}]_{\tau_1}^{\tau_2}.
\end{equation*}
Here recall that $Z\phi$ satisfies the above linear wave equation \eqref{LwaveeqZ}. Now from the previous estimate we obtain
\[
 D^{\a_1}[F_1-F_h-\bar{N}]_{\tau_1}^{\tau_2}\les D^{\a_1}[F_1]_{\tau_1}^{\tau_2}+\delta_0^2(D^{\a_1}[F]_{\tau_1}^{\tau_2}+ E^{1+\a}[Z\phi]_{\tau_1}^{\tau_2}
+I^\ep[Z\phi]_{\tau_1}^{\tau_2}+I^\ep [\phi]_{\tau_1}^{\tau_2}).
\]
Let $\delta_0$ be sufficiently small depending only on $\a$ (the implicit constant depends only on $\a$). We can absorb $E^{1+\a}[Z\phi]_{\tau_1}^{\tau_2}$ and thus to derive
\begin{align}
\label{inteEn}
E^{1+\a}[Z\phi]_{\tau_1}^{\tau_2}&\les \tilde{E}[Z\phi](\tau_1)+S^\ep [Z\phi](\tau_1)+I^\ep[Z\phi]_{\tau_1}^{\tau_2}+\tilde{D}_{\tau_1}^{\tau_2}, \\
 \label{DFh3}
D^{\a_1}[F_1-F_h-\bar{N}]_{\tau_1}^{\tau_2}&\les\delta_0^2(\tilde{E}[Z\phi](\tau_1)+S^\ep [Z\phi](\tau_1)+I^\ep[Z\phi]_{\tau_1}^{\tau_2})+\tilde{D}_{\tau_1}^{\tau_2},
\end{align}
where we denote
\[
 \tilde{D}_{\tau_1}^{\tau_2}=D^{\a_1}[F_1]_{\tau_1}^{\tau_2}+D^{\a_1}[F]_{\tau_1}^{\tau_2}+I^\ep[\phi]_{\tau_1}^{\tau_2}.
\]
From estimate \eqref{DFh} and Proposition \ref{propILED}, we have
\begin{equation}
\label{tildeDdecay}
 \tilde{D}_{\tau_1}^{\tau_2}\leq E_1 C_{\a, R} (\tau_1)_+^{-1-\a}.
\end{equation}
We now use the above estimates \eqref{inteEn}, \eqref{DFh3} to simplify the integrated energy estimate \eqref{ILE0}, the energy estimate \eqref{eb} as well as the $p$-weighted energy
inequalities \eqref{pwe1a}, \eqref{pwe1} for $Z\phi$. For the integrated energy estimate and the energy estimate, from \eqref{DFh3}, for sufficiently small $\delta_0$, we have
\[
 E[Z\phi](\tau_2)+I^\ep[Z\phi]_{\tau_1}^{\tau_2}+\int_{\tau_1}^{\tau_2}\int_{S_\tau}\frac{|\nabb Z\phi|^2}{r}dxd\tau\les E[Z\phi](\tau_1)
+\delta_0 S^\ep [Z\phi](\tau_i)+\tilde{D}_{\tau_1}^{\tau_2}.
\]
Here since $E^N[Z\phi]_0^\infty$ is finite, all the estimates hold if we replace $\tilde{E}[Z\phi](\tau)$ with $E[Z\phi](\tau)$. Now from the $p$-weighted energy inequalities
\eqref{pwe1}, we obtain the $p$-weighted energy estimate when $p=1$ for $Z\phi$
\begin{align*}
&g^{1}[Z\phi](\tau_2)+\int_{\tau_1}^{\tau_2}E[Z\phi](\tau)d\tau\les g^1[Z\phi](\tau_1)+\delta_0^2\int_{\tau_1}^{\tau_2}(\tau)_+^{-\a}E[Z\phi](\tau)d\tau d\tau\\
&+C_R((\tau_1)_+^{1-\a}E[Z\phi](\tau_1)+(\tau_1)_+^{-\a}+\delta_0(\tau_i)_+^{1-\a}S^\ep[Z\phi](\tau_i)),
\end{align*}
where we use the estimate \eqref{DFh3} to bound the inhomogeneous term $F_1-F_h-\bar N$. Let $\delta_0$ to be sufficiently small, depending only on $\a$ (as the implicit constant
depends only on $\a$). We conclude from the above estimate that
\begin{equation}
\label{intE}
\begin{split}
 &\int_{\tau_1}^{\tau_2}E[Z\phi](\tau)d\tau\\&\les g^1[Z\phi](\tau_1)+C_R((\tau_1)_+^{1-\a}E[Z\phi](\tau_1)+(\tau_1)_+^{-\a}+\delta_0(\tau_i)_+^{1-\a}S^\ep[Z\phi](\tau_i)).
\end{split}
\end{equation}
Similarly, we obtain the $p$-weighted energy inequality when $p=1+\a_1$ for $Z\phi$
\begin{align*}
&g^{1+\a_1}[Z\phi](\tau_2)dvd\om d\tau \les g^{1+\a_1}[Z\phi](\tau_1)+\int_{\tau_1}^{\tau_2}E[Z\phi](\tau)d\tau\\
&+C_{R}((\tau_1)_+E[Z\phi](\tau_1)+\delta_0(\tau_i)_+ S^\ep[Z\phi](\tau_i)+E_1).
\end{align*}
Let $\tau_1=0$. From the previous estimate for the integral of the energy flux, we derive
\[
 g^{1+\a_1}[Z\phi](\tau)dvd\om d\tau \les C_{R}( E_1+\tau_+ S^{\ep}[Z\phi](\tau)).
\]
By Proposition \ref{ILEthm} we have
\[
I^{\ep}[Z\phi]_{0}^{\infty}\leq C_{\a, R}E_1.
\]
Then the above two $p$-weighted energy estimates for $Z\phi$ are sufficiently to prove the decay of the integrated energy for $Z\phi$ (the proof is then the same as the proof in
Proposition \ref{propILED}). That is
\[
I^\ep[Z\phi]_{\tau_1}^{\tau_2}\leq E_1 C_{\a, R}(\tau_1)_+^{-1-\a},\quad \forall \tau_2\geq \tau_1\geq 0.
\]
To finish the proof for Proposition \ref{propILEDz}, it suffices to prove the decay of $E^{1+\a}[Z\phi]_{\tau_1}^{\tau_2}$. Note that
\[
 E^{1+\a}[Z\phi]_{\tau_1}^{\tau_2}=\int_{\tau_1}^{\tau_2}\frac{E[Z\phi](\tau)}{(1+\tau)^{1+\a}}d\tau\leq (\tau_1)_+^{-1-\a}\int_{\tau_1}^{\tau_2}E[Z\phi](\tau)d\tau.
\]
Since we have shown
\[
\int_{\tau_1}^{\tau_2}S^\ep[Z\phi](\tau)d\tau\leq  I^\ep[Z\phi]_{\tau_1}^{\tau_2}\leq E_1 C_{\a, R}
 (\tau_1)_+^{-1-\a},
\]
we can choose $\tau_2$ arbitrarily large such that
\[
 S^\ep [Z\phi](\tau_2)\leq E_1 C_{\a, R}(\tau_2)_+^{-1-\a}.
\]
for some arbitrarily large $\tau_2$.
 Then in the $p$-weighted energy inequality \eqref{intE} set $\tau_1=0$, we derive
\[
\int_{0}^{\tau_2}E[Z\phi](\tau)d\tau\leq E_1 C_{\a, R}.
\]
This constant is independent of $\tau_2$. In particular, we have
\[
 \int_{\tau_1}^{\tau_2}E[Z\phi](\tau)d\tau\leq E_1 C_{\a, R}.
\]
Therefore we have
\[
 E^{1+\a}[Z\phi]_{\tau_1}^{\tau_2}\leq
 (\tau_1)_+^{-1-\a} \int_{\tau_1}^{\tau_2}E[Z\phi](\tau)d\tau\leq E_1 C_{\a, R}(\tau_1)_{+}^{-1-\a}.
\]
This finishes the proof for Proposition \ref{propILEDz}.
\end{proof}

We now consider the solution of the quasilinear wave equation \eqref{QUASIEQsim} under the bootstrap assumptions
\eqref{boostphiout}, \eqref{boostphiinout}, \eqref{boostphiinin}. We show that the
integrated energy for $Z^k\phi$, $k\leq 6$ decays.
\begin{prop}
\label{propILEDz6}
Let $\phi$ be the solution of \eqref{QUASIEQsim} with compactly supported initial data $\phi_0$, $\phi_1$ described in Theorem \ref{maintheorem}. Then
\[
I^\ep[Z^k\phi]_{\tau_1}^{\tau_2}\leq E_0 C_{\a, R} (1+\tau_1)^{-1-\a},\quad
\forall k\leq 6
\]
for some constant $C_{\a, R}$ depending on $R$, $\a$. Here $E_0$ is defined before Theorem \ref{maintheorem}.
\end{prop}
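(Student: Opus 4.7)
The plan is to prove the claim by induction on $k$, using Proposition \ref{propILED} for the base case $k=0$ and Proposition \ref{propILEDz} for the inductive step. The key is to recast the quasilinear equation \eqref{QUASIEQsim} in the form studied in those propositions by absorbing the quasilinear nonlinearity into the principal part. Define $H^{\mu\nu}=h^{\mu\nu}+g^{\mu\nu\ga}\pa_\ga\phi$ and write the equation as $\Box\phi+H^{\mu\nu}\pa_{\mu\nu}\phi+\mathcal{N}^\mu\pa_\mu\phi=0$, where $\mathcal{N}^\mu$ collects the first-order terms from expanding $\Box_g$. By the bootstrap assumptions \eqref{boostphiout}--\eqref{boostphiinin}, the new contribution $g^{\mu\nu\ga}\pa_\ga\phi$ satisfies the same pointwise bounds as $h^{\mu\nu}$ in \eqref{HH}; moreover, the null condition on $g^{\mu\nu\ga}$ combined with the decomposition in Lemma \ref{nullstr} ensures that $(g^{\mu\nu\ga}\pa_\ga\phi)^{\Lb\Lb}$ involves only the good derivative $\overline{\pa_v}\phi$, so it obeys the improved decay $\bar H$ required for $h^{\Lb\Lb}$.

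For the base case $k=0$, Proposition \ref{propILED} with $F=0$ applies directly once $H$ and $\mathcal{N}$ are checked against \eqref{HH}. For the inductive step from $k$ to $k+1$, I commute the rewritten equation with $Z$. By Lemma \ref{nullstr}, $ZQ(\phi,\phi)=Q(Z\phi,\phi)+Q(\phi,Z\phi)+Q(\phi,\phi)$, and iterating $k+1$ times produces an equation for $Z^{k+1}\phi$ of exactly the form \eqref{LwaveeqZ} required by Proposition \ref{propILEDz}. The principal part retains $H^{\mu\nu}\pa_{\mu\nu}$; the commutator term $\tilde h^{\mu\nu}\pa_{\mu\nu}(Z^k\phi)$ has coefficients $\tilde h^{\mu\nu}=g^{\mu\nu\ga}\pa_\ga(Z\phi)+[Z,h^{\mu\nu}]$, which satisfy the hypotheses \eqref{HHz} directly from the bootstrap assumption on $\pa Z^j\phi$ with $|j|\leq 4$, again with the $\Lb\Lb$ improvement furnished by the null condition.

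The remaining inhomogeneity $F_{k+1}$ is a sum of null forms $Q(Z^{i_1}\phi,Z^{i_2}\phi)$ with $i_1+i_2\leq k+1$ and $i_1,i_2\leq k$. To estimate $D^{\a_1}[F_{k+1}]_{\tau_1}^{\tau_2}\leq E_0 C_{\a,R}(\tau_1)_+^{-1-\a}$, I perform a dyadic split of derivatives: since $k+1\leq 6$, at least one of $i_1,i_2$ is $\leq 3$, so that factor enters via the pointwise bootstrap bounds $\bar H,H$, while the other is controlled in the integrated weighted energy by the induction hypothesis $I^\ep[Z^j\phi]_{\tau_1}^{\tau_2}\leq E_0 C_{\a,R}(\tau_1)_+^{-1-\a}$. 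Lemma \ref{lemopde} together with Lemma \ref{LLbphi} reduces all second-derivative factors to combinations of $\overline{\pa_v}Z\phi$, $\pa Z\phi/r$, $\Lb Z\phi$ and $F$, and Lemma \ref{lemH2estin1} handles $|\pa^2 Z^j\phi|$ in the interior $\{r\leq 1\}$ via elliptic estimates. Proposition \ref{propILEDz} then yields the desired decay for $I^\ep[Z^{k+1}\phi]_{\tau_1}^{\tau_2}$, closing the induction.

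The main obstacle is controlling the top-order inhomogeneity $F_{k+1}$ in the weighted norm $D^{\a_1}$: a naive estimate loses powers of $r$ because the integrated local energy only controls $|\bar\pa\phi|^2/(1+r)^{1+\ep}$, whereas $D^{\a_1}$ carries the opposite weight $(1+r)^{1+\a_1}$. The null structure from Lemma \ref{nullstr} is essential here, since it guarantees that in every bilinear term one factor is a good derivative $\overline{\pa_v}$ (or $\Lb$ contracted with at worst one bad derivative) carrying the extra decay in $r$ needed to compensate for the $(1+r)^{1+\a_1}$ weight. Verifying that the resulting dyadic bilinear split produces precisely the decay $(\tau_1)_+^{-1-\a}$ — and that the commutator contributions to $\tilde h^{\Lb\Lb}$ remain controlled by $\bar H$ rather than the weaker $H$ — is the technical heart of the argument, and is where the improved assumptions on $\pa h^{\Lb\Lb}$ and $\overline{\pa_v}h^{\mu\nu}$ in \eqref{HHqu} are used.
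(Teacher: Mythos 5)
Your overall architecture coincides with the paper's: rewrite \eqref{QUASIEQsim} with the quasilinear term absorbed into the principal part, verify via the bootstrap assumptions and the null condition that $g^{\mu\nu\ga}\pa_\ga\phi$ obeys \eqref{HH}--\eqref{HHimp} (with the $\Lb\Lb$ component obeying the improved bound $\bar H$), commute with $Z$, check \eqref{HHz} for the commutator coefficients, and induct on $k$ using Propositions \ref{propILED} and \ref{propILEDz}, with Lemmas \ref{lemopde}, \ref{LLbphi}, \ref{lemH2estin1} reducing the second-derivative factors. This is exactly the paper's proof up to one step.

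The gap is in your estimate of the lower-order null forms $Q(Z^{i_1}\phi,Z^{i_2}\phi)$ (the paper's $Q_2^k$). You claim that since $i_1+i_2\leq 6$ at least one index is $\leq 3$, and that this factor "enters via the pointwise bootstrap bounds." But the bootstrap assumptions \eqref{boostphiout}--\eqref{boostphiinout} are spherical $L^2$ averages of $\pa Z^k\phi$ only for $k\leq 4$ and of second derivatives only for $k\leq 3$; converting these to pointwise bounds by Sobolev embedding on $\mathbb{S}^2$ costs two additional powers of $\Om$, so a pointwise bound is available only for $|\pa Z^{i_1}\phi|$ with $i_1\leq 2$ and for $|\pa^2 Z^{i_2}\phi|$ with $i_2\leq 1$. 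For the balanced pairs at top order, e.g.\ $(i_1,i_2)=(4,2)$ or $(3,3)$ with $k=6$, neither factor can be placed in $L^\infty(\mathbb{S}^2)$, and your split fails. The paper's Lemma \ref{D3aQ2k} closes exactly this case with the bilinear spherical estimate
\[
\|\pa Z^{i_1}\phi\,\pa^2 Z^{i_2}\phi\|_{L^2(\mathbb{S}^2)}\les \|\pa Z^{i_1}\phi\|_{H^1(\mathbb{S}^2)}\,\|\pa^2 Z^{i_2}\phi\|_{H^1(\mathbb{S}^2)},
\]
using H\"older with two $L^4(\mathbb{S}^2)$ norms and $H^1(\mathbb{S}^2)\hookrightarrow L^4(\mathbb{S}^2)$, which costs only one extra angular derivative on each factor; one factor then lands in the range controlled by the bootstrap and the other in the range controlled by the induction hypothesis. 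Without this (or an equivalent trade), the estimate $D^{\a_1}[Q_2^k]_{\tau_1}^{\tau_2}\les E_0(\tau_1)_+^{-1-\a}$ does not follow from your argument at $k=5,6$.
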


To show the integrated energy decay for $Z^k\phi$, we consider the equation of $Z^k\phi$ obtained by commuting the equation \eqref{QUASIEQsim} with $Z^k$. Let
$N$ be the vector field with components
\[
 N^\mu=\frac{1}{\sqrt{-G}}\pa_\nu(g^{\mu\nu}\sqrt{-G}).
\]
Then we can write the equation \eqref{QUASIEQsim} as
\[
 \Box\phi+Q(\phi, \phi)+h^{\mu\nu}\pa_{\mu\nu}\phi+N(\phi)=0
\]
Commute this equation with $Z^k$. We obtain the equation for $Z^k\phi$
\begin{equation}
\label{eqZphi} \Box Z^k\phi+Q(\phi, Z^k\phi)+h^{\mu\nu}\pa_{\mu\nu}
Z^k\phi+Q_1^k+H_1^k+Q(Z^k\phi, \phi)+N(Z^k\phi)=F^k_2.
\end{equation}
with the following definitions for $Q_1^k$, $H_1^k$, $F_2^k$. Using Lemma \ref{nullstr}, we let $Q_1^k$ be the collection of all
those terms containing $\pa_{\mu\nu}Z^{k-1}\phi$ in the expansion of
$Z^k Q(\phi, \phi)$. More precisely,
\[
Q_1^k=Q(Z\phi, Z^{k-1}\phi)+Q(\phi, Z^{k-1}\phi).
\]
We remark here that $Q$ denotes a general null form for constants $\tilde{g}^{\mu\nu\ga}$ satisfying the null condition. It may be different from
$g^{\mu\nu\ga}$ appeared in the equation \eqref{QUASIEQsim}. For example we in fact have
\[
Q(Z\phi, Z^{k-1}\phi)=kg^{\mu\nu\ga}\pa_\ga Z\phi\cdot\pa_{\mu\nu}Z^{k-1}\phi.
\]
Similarly, we let $H_1^k$ be the collection of all those terms in the expansion of $Z(h^{\mu\nu}\pa_{\mu\nu}\phi)$ containing $\pa_{\mu\nu}Z^{k-1}\phi$, which can be
given as follows
\[
 H_1^k=k(Z(h^{\mu\nu}\pa_{\mu\nu}Z^{k-1}\phi)-h^{\mu\nu}\pa_{\mu\nu}Z^k\phi).
\]
Finally, we denote
\begin{align*}
& Q^k_2=-\sum\limits_{k_2\leq k-2, k_1+k_2\leq k, k_1<k}Q(Z^{k_1}\phi,Z^{k_2}\phi),\quad H_2^k =-Z^k(h^{\mu\nu}\pa_{\mu\nu}\phi)+h^{\mu\nu}\pa_{\mu\nu}Z^k\phi+H_1^k\\
& Q^k=Q(Z^k\phi, \phi), N^k=N(Z^k\phi), F_2^k=Q_2^k+H_2^k-[Z^k, N]\phi,\quad F^k=F_2^k-Q_1^k-H_1^k.
 \end{align*}
We first check that $g^{\mu\nu\ga}\pa_\ga\phi$ satisfies the same estimates \eqref{HH}, \eqref{HHimp} as $h^{\mu\nu}$. Note that
\[
 g^{\Lb\Lb \ga}\pa_\ga\phi=g^{\mu\nu\ga}\f12 \Lb_\mu\f12\Lb_\nu\pa_\ga\phi,\quad g^{\mu\nu\ga}\Lb_\mu\Lb_\nu\Lb_\ga=0.
\]
The bootstrap assumption \eqref{boostphiout} together with the
Sobolev embedding on the unit sphere shows that
\[
 |g^{\Lb\Lb \ga}\pa_\ga\phi|+|\pa g^{\Lb\Lb \ga}\pa_\ga\phi|\leq 2\bar H,\quad |\nabb g^{\Lb\Lb \ga}\pa_\ga\phi|\leq |r^{-1}g^{\Lb\Lb \ga}\pa_\ga Z\phi|\leq 2 r^{-1}\bar H,
\quad r\geq R.
\]
The other estimates in \eqref{HH}, \eqref{HHimp} follow directly from the bootstrap assumptions
\eqref{boostphiout}, \eqref{boostphiinout}, \eqref{boostphiinin}
after using Sobolev embedding.

To apply Proposition \ref{propILEDz}, we can write the equation for $Z^{k-1}\phi$ as
\[
 \Box Z^{k-1}\phi+(g^{\mu\nu\ga}\pa_\ga \phi+h^{\mu\nu})\pa_{\mu\nu}Z^{k-1}\phi+(g^{\mu\nu\ga}\pa_{\mu\nu}\phi+N^\ga)\pa_\ga(Z^{k-1}\phi)=F^{k-1}.
\]
Then the equation for $Z^{k}\phi$ will be of the form \eqref{LwaveeqZ} if we denote $\tilde{h}^{\mu\nu}$,
be functions such that
\[
\tilde{h}^{\mu\nu}\pa_{\mu\nu}\phi=Q_1^k+H_1^k.
\]
The vector field $N^\mu$ there corresponds to $g^{\mu\nu\ga}\pa_{\mu\nu}\phi+N^\ga$ here. And $\tilde{N}$ is the $Z$ derivative of $N$. We
can check that $\tilde{h}^{\mu\nu}$, $\tilde{N}^{\mu}$, $N$ satisfy condition
\eqref{HHz}. In fact for $g^{\mu\nu\ga}\pa_\ga Z\phi$ or $g^{\nu\ga\mu}\pa_{\nu\ga}\phi$ contributed by the null
form $Q_1^k$, we can show that the condition \eqref{HHz} is satisfied by using Lemma \ref{nullstr} together with the bootstrap
assumptions. For the part from $H_1^k$, we have the
assumption \eqref{HHqu}. This implies that we can apply Proposition \ref{propILED} and \ref{propILEDz} to show the integrated energy decay of $Z^k\phi$.

In particular, when $k\leq 1$, we have $F_2^k=0$. Thus Proposition \ref{propILED} and \ref{propILEDz} imply that
\[
I^\ep[Z\phi]_{\tau_1}^{\tau_2}+D^{\a_1}[Q_1^1+H_1^1]_{\tau_1}^{\tau_2}+D^{\a_1}[Q^1+N^1]_{\tau_1}^{\tau_2}+E^{1+\a}[Z\phi]_{\tau_1}^{\tau_2}\leq E_0 C_{\a, R}
(\tau_1)_+^{-1-\a},\quad
k\leq 1.
\]
Here we recall that $Q^k=Q(Z^k\phi, \phi)$, $N^k=N(Z^k\phi)$.
We now use induction argument to show Proposition \ref{propILEDz6}.
We assume that for some fixed $k\leq 6$
\begin{equation}
\label{inductass}
\begin{split}
&I^\ep[Z^l\phi]_{\tau_1}^{\tau_2}+D^{\a_1}[Q_1^l+H_1^l]_{\tau_1}^{\tau_2}+D^{\a_1}[F_2^l]_{\tau_1}^{\tau_2}+D^{\a_1}[Q^l+N^l]_{\tau_1}^{\tau_2}+
E^{1+\a}[Z^l\phi]_{\tau_1}^{\tau_2}\\
&\leq E_0 C_{\a, R, k-1}(\tau_1)_+^{-1-\a},\quad \forall l\leq k-1.
\end{split}
\end{equation}
We have shown that this is true when $k=2$. Now we want to
show that the above estimate also holds for $l=k$.

First note that the induction assumption in particular implies that
\[
 D^{\a_1}[F^{l}]_{\tau_1}^{\tau_2}= D^{\a_1}[F_2^{l}-Q_1^{l}-H_1^{l}]_{\tau_1}^{\tau_2}\leq E_0C_{\a, R, k-1}(\tau_1)_+^{-1-\a},\quad \forall l\leq k-1.
\]
Therefore by Proposition \ref{propILEDz}, the estimate \eqref{inductass} holds for $k$ if we can show that
\begin{equation*}
D^{\a_1}[F^k_2]_{\tau_1}^{\tau_2}\leq D^{\a_1}[Q^k_2]_{\tau_1}^{\tau_2}+D^{\a_1}[H_2^k]_{\tau_1}^{\tau_2}+D^{\a_1}[[Z^k, N]\phi]_{\tau_1}^{\tau_2}\leq E_0 C_{\a, R,k}(\tau_1)_+^{-1-\a},
\end{equation*}
which follows from the following two lemmas.
\begin{lem}
 \label{D3aH2k}
Under the induction assumption \eqref{inductass}, we have
\[
D^{\a_1}[H_2^k]_{\tau_1}^{\tau_2} +D^{\a_1}[[Z^k, N]\phi]_{\tau_1}^{\tau_2}\leq E_0 C_{\a, R, k}(\tau_1)_+^{-1-\a}.
\]
\end{lem}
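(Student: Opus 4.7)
The plan is to expand $H_2^k$ and $[Z^k,N]\phi$ so that every factor involving $\phi$ is of $Z$-order at most $k-1$, hence controlled by the induction hypothesis \eqref{inductass}, while the metric factors supply the required decay via \eqref{HHqu}. Using the Leibniz rule together with $[\pa_\mu,\Om_{ij}]=\delta_{i\mu}\pa_j-\delta_{j\mu}\pa_i$ and $[\pa_\mu,\pa_t]=0$, one writes schematically
\[
H_2^k = \sum_{\substack{j\ge 2\\ j+l\le k}} C_{j,l}\,Z^j h^{\mu\nu}\cdot\pa_{\mu\nu}Z^l\phi \;+\; \sum_{\substack{0\le j\le k\\ l\le k-1}} C'_{j,l}\,Z^j h^{\mu\nu}\cdot \pa_\mu Z^l\phi,
\]
and $[Z^k,N]\phi = \sum_{j\ge 1,\,l\le k-1} C''_{j,l}\,Z^j N^\mu \cdot \pa_\mu Z^l\phi$, where $Z^j N^\mu$ enjoys decay of the same type as $\pa Z^{j-1}h^{\mu\nu}$ because $N^\mu$ already contains one derivative of $g$. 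In the first sum, $j\ge 2$ forces $l\le k-2$, so only $Z^{\le k-1}\phi$ appears. The second sum collects the contributions of $[\pa_{\mu\nu},Z^{k-j}]$, which are first-order in $\phi$ with $Z$-order $\le k-1$.

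For the second-order pieces ($j\ge 2$) in the exterior region $\{r\ge R\}$, I would apply Lemma~\ref{lemopde} to decompose
\[
|Z^j h^{\mu\nu}\pa_{\mu\nu}Z^l\phi|\les |Z^j h^{\Lb\Lb}|\,|\pa Z^{l+1}\phi|+|Z^j h|\bigl(|\overline{\pa_v}Z^{l+1}\phi|+|L\Lb Z^l\phi|+r^{-1}|\pa Z^l\phi|\bigr).
\]
The null structure in \eqref{HHqu} is decisive: the bad derivative $\pa Z^{l+1}\phi$ pairs only with $Z^j h^{\Lb\Lb}$, which enjoys the improved decay $\delta_0 r_+^{-1-2\a}$, whereas the generic components $Z^j h$, obeying $\delta_0(r_+^{-\f12-2\a}\tau_+^{-\f12-\f12\a}+r_+^{-1-2\a})$, multiply only the good derivative $\overline{\pa_v}Z^{l+1}\phi$, the reduced-order $L\Lb Z^l\phi$ (which Lemma~\ref{LLbphi} converts into derivative terms plus the source $F^l$ of the equation \eqref{eqZphi} for $Z^l\phi$), or $r^{-1}|\pa Z^l\phi|$. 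Since the parameter constraints in Proposition~\ref{ILEthmpw} imply $\a_1<\frac{7}{3}\a<4\a$, squaring and multiplying by $(1+r)^{1+\a_1}$ produces pointwise bounds dominated by
\[
\delta_0^2\,\tau_+^{-1-\a}\bigl(|\pa Z^{l+1}\phi|^2+|\overline{\pa_v}Z^{l+1}\phi|^2\bigr)+\delta_0^2\,(1+r)^{-1-\ep}|\bar\pa Z^{l+1}\phi|^2+(1+r)^{1+\a_1}|Z^j h|^2|F^l|^2.
\]
After integration in $x$ and $\tau$, the first two groups reduce to $E^{1+\a}[Z^{l+1}\phi]_{\tau_1}^{\tau_2}$ and $I^\ep[Z^{l+1}\phi]_{\tau_1}^{\tau_2}$, and the third is dominated by $\delta_0^2 D^{\a_1}[F^l]_{\tau_1}^{\tau_2}$ after using $(1+r)^{\a_1-4\a}\le 1$. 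Since $l+1\le k-1$ and $l\le k-2$, each quantity on the right is $\le E_0 C_{\a,R,k-1}(\tau_1)_+^{-1-\a}$ by \eqref{inductass}.

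In the interior region, two side estimates are needed: for $1\le r\le R$, each $Z^j h$ is smooth on the bounded annulus and is absorbed into the constant $C_{\a,R,k}$, reducing the contribution to a weighted local integral majorized by $I^\ep[Z^{l+1}\phi]_{\tau_1}^{\tau_2}$; for $r\le 1$, Lemma~\ref{lemH2estin1} applied to the equation \eqref{eqZphi} for $Z^l\phi$ converts the $\pa^2 Z^l\phi$ factor into $I^\ep[Z^{l+1}\phi]_{\tau_1}^{\tau_2}+I^\ep[Z^l\phi]_{\tau_1}^{\tau_2}+D^{\a_1}[F^l]_{\tau_1}^{\tau_2}$. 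The first-order commutator contributions ($j=0,1$ in the schematic sum) use only the pointwise bounds on $|h|,|Zh|$ from \eqref{HHqu} together with $E^{1+\a}[Z^{\le k-1}\phi]_{\tau_1}^{\tau_2}$, and $[Z^k,N]\phi$ is treated identically. The main obstacle is to preserve the separation between the smallness $\delta_0$, which is fixed from Propositions~\ref{ILEthm}--\ref{propILEDz} and must remain uniform in $R$, and the large constant $C_{\a,R,k}$, which absorbs the interior smoothness of $h$ and the induction base; the quadratic structure $Z^j h\cdot \pa Z^l\phi$ in the exterior supplies an overall $\delta_0^2$ weight so that no loss of smallness occurs in $\{r\ge R\}$, while interior losses enter only $C_{\a,R,k}$, and the estimate closes at the rate $(\tau_1)_+^{-1-\a}$ as claimed.
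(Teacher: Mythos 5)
Your proposal is correct and follows essentially the same route as the paper: a Leibniz expansion reducing everything to $Z$-order $\leq k-1$, the null-frame decomposition of Lemma~\ref{lemopde} with the improved decay of $Z^jh^{\Lb\Lb}$ absorbing the bad derivative, Lemma~\ref{LLbphi} for the $L\Lb Z^l\phi$ terms, Lemma~\ref{lemH2estin1} for the region $r\leq 1$, and the induction assumption \eqref{inductass} to close. The extra bookkeeping you supply on the weights $(1+r)^{1+\a_1}$ versus $\bar H^2$, $H^2$ is consistent with the paper's pointwise bound \eqref{H2kzkn} and the subsequent reduction to $I^\ep$, $E^{1+\a}$ and $D^{\a_1}[F^l]$.
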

\begin{proof}
We use condition \eqref{HHqu} and Lemma \ref{lemopde}. We can show that
\begin{equation}
\label{H2kzkn}
 |H_2^k|+|[Z^k, N]\phi|\les
 \begin{cases}
 \sum\limits_{l\leq k-1}\bar H |\pa Z^{l}\phi|+H(|\overline{\pa_v}Z^l\phi|+r^{-1}|\pa Z^l\phi|+|L\Lb Z^{l-1}\phi|),r\geq 1;\\
\bar H  \sum\limits_{l\leq k-2}|\pa^2 Z^{l}\phi|+|\pa Z^{l+1}\phi|,\quad r<1.
\end{cases}
\end{equation}
Thus we have
\begin{equation*}
 \begin{split}
D^{\a_1}[H_2^k]_{\tau_1}^{\tau_2} +D^{\a_1}[[Z^k,N]\phi]_{\tau_1}^{\tau_2}\les&\sum\limits_{l\leq k-1} \int_{\tau_1}^{\tau_2}\int_{r\leq 1}|\pa^2 Z^{l-1}\phi|^2dxd\tau+I^\ep[Z^l\phi]_{\tau_1}^{\tau_2}\\
&+E^{1+\a}[Z^l\phi]_{\tau_1}^{\tau_2}+\int_{\tau_1}^{\tau_2}\int_{\Si_\tau\cap\{r\geq 1\}}|L\Lb Z^{l-1}\phi|^2dxd\tau.
 \end{split}
\end{equation*}
Here $Z^{-1}\phi=0$. Now using Lemma \ref{lemH2estin1} and the induction assumption \eqref{inductass}, we can estimate
\begin{align*}
 \int_{\tau_1}^{\tau_2}\int_{r\leq 1}|\pa^2 Z^{l}\phi|^2dxd\tau\les D^{\a_1}[F^l]_{\tau_1}^{\tau_2}+\sum\limits_{l_1\leq l+1} I^\ep[Z^{l_1}\phi]_{\tau_1}^{\tau_2}\leq
C_{\a, R, k} E_0(\tau_1)_+^{-1-\a},\forall l\leq k-2.
\end{align*}
Similarly using Lemma \ref{LLbphi} to control $L\Lb Z^{k-2}\phi$, we
get
\begin{align*}
 \int_{\tau_1}^{\tau_2}\int_{\Si_\tau\cap\{r\geq 1\}}|L\Lb Z^{l}\phi|^2dxd\tau&\les \sum\limits_{l_1\leq l+1}I^\ep[Z^{l_1}\phi]_{\tau_1}^{\tau_2}+E^{1+\a}
[Z^{l_1}]_{\tau_1}^{\tau_2}+D^{\a_1}[F^l]_{\tau_1}^{\tau_2}\\
&\leq C_{\a, R, k} E_0 (\tau_1)_+^{-1-\a},\quad \forall l\leq k-2.
\end{align*}
The proves the lemma.
\end{proof}
For $D^{\a_1}[Q_2^k]_{\tau_1}^{\tau_2}$, we have
\begin{lem}
 \label{D3aQ2k}
We have
\[
D^{\a_1}[Q_2^k]_{\tau_1}^{\tau_2} \leq E_0 C_{\a, R, k}(\tau_1)_+^{-1-\a}.
\]
\end{lem}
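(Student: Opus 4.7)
The plan is to estimate each null-form summand $Q(Z^{k_1}\phi, Z^{k_2}\phi)$ in $Q_2^k$ separately, splitting between the interior $\{|x|\leq R\}$ and the exterior $\{|x|\geq R\}$. The key structural inputs are the constraints $k_2\leq k-2$, $k_1<k$, $k_1+k_2\leq k$, which by pigeonhole force $\min(k_1,k_2)\leq k/2$, together with the null structure encoded in Lemma \ref{nullstr}, which eliminates the worst component $\Lb Z^{k_1}\phi\cdot\Lb\Lb Z^{k_2}\phi$.

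In the interior the weight $(1+r)^{1+\a_1}$ is bounded by $R^{1+\a_1}$, so I would combine the bootstrap \eqref{boostphiinout}, \eqref{boostphiinin} with Sobolev on the unit sphere to get pointwise $L^\infty$ smallness of $\pa Z^l\phi$ (and of $\pa^2 Z^l\phi$ when available) for $l$ within the bootstrap range, then place the low-index factor in $L^\infty$ and the high-index factor in weighted $L^2$; whenever a second spatial derivative appears on the high-index factor, Lemma \ref{lemH2estin1} reduces it to integrated energies for $Z^{l'}\phi$ and $Z^{l'+1}\phi$ with $l' \leq k-1$, all controlled by the induction hypothesis \eqref{inductass}. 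In the exterior I would first invoke Lemma \ref{nullstr} to write
\[
|Q(Z^{k_1}\phi, Z^{k_2}\phi)| \les |\overline{\pa_v}Z^{k_1}\phi||\pa Z^{k_2+1}\phi|+|\pa Z^{k_1}\phi|\bigl(|\overline{\pa_v}Z^{k_2+1}\phi|+|L\Lb Z^{k_2}\phi|+r^{-1}|\pa Z^{k_2}\phi|\bigr),
\]
and for each bilinear term place one factor pointwise in $L^\infty$ using the bootstrap \eqref{boostphiout} and Sobolev on the sphere, obtaining bounds of order $\bar H\les \delta_0 r_+^{-1-2\a}$ for the good derivatives (and for $L\Lb Z^{k_2}\phi$ after applying Lemma \ref{LLbphi}) or $H$ for a generic $\pa$; the other factor is placed in the integrated energy $I^\ep[Z^{l'}\phi]$ or $E^{1+\a}[Z^{l'}\phi]$ with $l'\leq k-1$, where the induction hypothesis supplies both the rate $(\tau_1)_+^{-1-\a}$ and the control of any $F^{l'}$ appearing through Lemma \ref{LLbphi}. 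The spatial weight $(1+r)^{1+\a_1}$ is absorbed by $\bar H^2\les \delta_0^2 r_+^{-2-4\a}$, since the choice of $\a_1<\a_2$ in Proposition \ref{ILEthmpw} guarantees $1+\a_1<2+4\a$.

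The hard part will be a pigeonhole issue at the top order $k=6$: for borderline splittings such as $(k_1,k_2)=(3,3)$, neither factor has index $\leq 2$, so the naive $L^2(\omega)\to L^\infty$ Sobolev step on the sphere (which consumes two $Z$-derivatives and therefore demands index $\leq 2$ within the bootstrap range $|k|\leq 4$ of \eqref{boostphiout}) is unavailable. For these cases I would either interpolate via $L^4(S^2)\leftarrow H^1(S^2)$ in a H\"older split (costing only one angular derivative, which still fits within the bootstrap range), or fall back on a three-dimensional Sobolev embedding on $\Si_\tau$ and use the induction hypothesis at order $k-1\geq 5$ to supply the missing $L^2$-control of the extra derivatives. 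Once these borderline splittings are handled, all remaining subcases reduce to routine applications of the interior/exterior scheme above, yielding the claimed bound $E_0 C_{\a,R,k}(\tau_1)_+^{-1-\a}$.
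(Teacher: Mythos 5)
Your proposal is correct and follows essentially the same route as the paper: reduce $Q_2^k$ to the individual null forms $Q(Z^{k_1}\phi,Z^{k_2}\phi)$, use Lemma \ref{nullstr} to kill the $\Lb\cdot\Lb\Lb$ component, place the low-order factor in $L^\infty(\mathbb{S}^2)$ via the bootstrap assumptions plus Sobolev on the sphere to extract the smallness $H,\bar H$, and feed the high-order factor (including $L\Lb Z^{l}\phi$ via Lemma \ref{LLbphi} and $\pa^2$ near the origin via Lemma \ref{lemH2estin1}) into the integrated energies controlled by the induction hypothesis \eqref{inductass}, exactly as in the proof of Lemma \ref{D3aH2k}. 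The borderline derivative-count issue you flag is precisely the one the paper resolves with the $\|\cdot\|_{L^4(\mathbb{S}^2)}\les\|\cdot\|_{H^1(\mathbb{S}^2)}$ H\"older split (applied there to $(k_1,k_2)=(4,2)$), so your proposed fix is the paper's own.
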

\begin{proof}
The proof will be the same to that of the previous lemma once we can estimate the null form. Recall the definition of $Q_2^k$ after equation \eqref{eqZphi}. It suffices to consider $Q(Z^{k_1}\phi, Z^{k_2}\phi)$ for some pair $(k_1, k_2)$ such that
$k_1+k_2\leq k$, $k_2\leq k-2$, $k_1\leq k-1$. We need a Sobolev embedding to estimate the null form.
We claim that for such pair $(k_1, k_2)$, we always have
\begin{equation*}
\begin{split}
 \int_{\om}|\pa Z^{k_1}\phi|^2|\pa^2 Z^{k_2}\phi|^2d\om\les & \sum\limits_{l\leq 3}\int_{\om}|\pa^2 Z^l\phi|^2d\om \cdot \sum\limits_{l\leq k-1}\int_{\om}|\pa Z^{l}\phi|d\om\\
&+
\sum\limits_{l\leq 4}|\pa Z^l\phi|^2d\om\cdot
\sum\limits_{l\leq k-2}\int_{\om}|\pa^2 Z^l\phi|^2d\om.
\end{split}
\end{equation*}
We only prove the above claim for the case $k_1+k_2=k=6$, $k_1\leq 5$, $k_2\leq 4$. If $k_2\leq 1$ or $k_1\leq 3$ , the above inequality follows from Sobolev
 embedding on the unit sphere.
If $k_1=4$, $k_2=2$, we use
\[
 \|\pa Z^{k_1}\phi\pa^2 Z^{k_2}\phi\|_{L^2(\mathbb{S}^2)}\les \|\pa Z^{k_1}\phi\|_{L^4(\mathbb{S}^2)}
\|\pa^2 Z^{k_2}\phi\|_{L^4(\mathbb{S}^2)}\les \|\pa
Z^{k_1}\phi\|_{H^1(\mathbb{S}^2)} \|\pa^2
Z^{k_2}\phi\|_{H^1(\mathbb{S}^2)}.
\]
Thus the above Sobolev embedding holds. Now using Lemma
\ref{nullstr} and the bootstrap assumptions \eqref{boostphiout},
\eqref{boostphiinout}, we can show that when $r\geq 1$
\begin{align}
\label{Qzk}
 &\int_{\om}|Q(Z^{k_1}\phi, Z^{k_2}\phi)|^2d\om\\
 \notag
&\les \sum\limits_{l\leq k-1} \int_{\om} H^2 |\overline{\pa_v} Z^{l}\phi|^2+\bar H^2|\pa Z^l\phi|^2+H^2|L\Lb Z^{l-1}\phi|^2
+H^2r^{-2}|\pa Z^l\phi|^2d\om.
\end{align}
When $r\leq 1$, note that $k_1+k_2\leq k\leq 6$. In particular, $k_1\leq 3$ or $k_2\leq 2$. Thus by the bootstrap assumption \eqref{boostphiinin}, we can get
\[
 |Q(Z^{k_1}\phi, Z^{k_2}\phi)|\les\bar H \sum\limits_{l\leq k-2}|\pa^2 Z^l\phi|+|\pa Z^{l+1}\phi|.
\]
Then the Lemma follows from the same argument for proving Lemma \ref{D3aH2k}.
\end{proof}

The above two lemmas together with Proposition \ref{propILEDz} implies that \eqref{inductass} holds for $l=6$. In particular we have Proposition \ref{propILEDz6}. From the proof,
we in fact can prove an important integrated energy
inequality for $L\Lb Z^k\phi$ with positive weights in $r$ which will be used to derive the pointwise decay of the derivative of the solution.
\begin{cor}
\label{corimprol} We have
\[
\int_{\tau_1}^{\tau_2}\int_{\Si_\tau\cap \{r\geq 1\}}r^{1-\ep}|L\Lb
Z^k\phi|^2dx d\tau\leq E_0 C_{\a, R}(\tau_1)_+^{-1-\a},\quad \forall
k\leq 5.
\]
\end{cor}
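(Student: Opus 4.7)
}

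The plan is to mimic the bound on $L\Lb$ established in Lemma \ref{LLbphi} but at the level of $Z^k\phi$, carrying an extra $r^{1-\ep}$ weight through the computation and invoking the decay estimates from Propositions \ref{propILEDz}--\ref{propILEDz6} (which require $k+1\le 6$, hence the restriction $k\le 5$). I would begin with the commuted equation \eqref{eqZphi}, rewrite $\Box = -L\Lb + \frac{2}{r}\pa_r+\lap$, and solve for $L\Lb Z^k\phi$. Decomposing $h^{\mu\nu}\pa_{\mu\nu}Z^k\phi$ via Lemma \ref{lemopde} and the null forms $Q(\phi,Z^k\phi)$, $Q(Z^k\phi,\phi)$, $Q_1^k$ via Lemma \ref{nullstr} produces a self-term $|h^{\Lb\Lb}+g^{\Lb\Lb\ga}\pa_\ga\phi|\cdot|L\Lb Z^k\phi|$ on the right, which is absorbed using $|h|+|\pa\phi|\lesssim H+\bar H\lesssim \delta_0$ (from the bootstrap). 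The outcome is a pointwise bound
\[
|L\Lb Z^k\phi|\lesssim \sum_{l\le k+1}\frac{|\pa Z^l\phi|}{r}+H\,|\overline{\pa_v}Z^l\phi|+\bar H\,|\pa Z^l\phi|+|F_2^k|+|Q_1^k|+|H_1^k|,\qquad r\ge 1.
\]

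Next I would square this, multiply by $r^{1-\ep}$, and integrate over $\Si_\tau\cap\{r\ge 1\}$ for $\tau\in[\tau_1,\tau_2]$, estimating each piece against quantities already shown to decay like $(\tau_1)_+^{-1-\a}$. The term $r^{-1-\ep}|\pa Z^l\phi|^2$ integrates directly to $I^\ep[Z^l\phi]_{\tau_1}^{\tau_2}$, which decays by Proposition \ref{propILEDz6}. Using $\bar H^2\lesssim \delta_0^2 r^{-2-4\a}$, the $r^{1-\ep}\bar H^2|\pa Z^l\phi|^2$ term is absorbed into $\delta_0^2 I^\ep[Z^l\phi]_{\tau_1}^{\tau_2}$. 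The key term involves $H^2$; since $H^2\lesssim \delta_0^2(r^{-2-4\a}+r^{-1-4\a}\tau_+^{-1-\a})$, it splits as
\[
r^{1-\ep}H^2|\overline{\pa_v}Z^l\phi|^2\lesssim \delta_0^2 r^{-1-\ep-4\a}|\overline{\pa_v}Z^l\phi|^2+\delta_0^2\,\tau_+^{-1-\a}r^{-4\a-\ep}|\overline{\pa_v}Z^l\phi|^2,
\]
the first piece giving $\delta_0^2 I^\ep[Z^l\phi]$, and the second, since $r^{-4\a-\ep}\le 1$ for $r\ge 1$, bounded by $\delta_0^2\tau_+^{-1-\a}E[Z^l\phi](\tau)$, whose integral is $\delta_0^2 E^{1+\a}[Z^l\phi]_{\tau_1}^{\tau_2}$—again decaying like $(\tau_1)_+^{-1-\a}$ by Proposition \ref{propILEDz6}. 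Finally, since $r^{1-\ep}\le (1+r)^{1+\a_1}$ for $r\ge 1$, the contributions of $F_2^k$, $Q_1^k$, $H_1^k$ are controlled by $D^{\a_1}[F_2^k]_{\tau_1}^{\tau_2}+D^{\a_1}[Q_1^k+H_1^k]_{\tau_1}^{\tau_2}$, which decay at the same rate by the induction \eqref{inductass} and Lemmas \ref{D3aH2k}--\ref{D3aQ2k}.

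The main obstacle is the term $r^{1-\ep}H^2|\overline{\pa_v}Z^{k+1}\phi|^2$: naively inserting only the uniform boundedness of the energy would cost a power and give decay rate $(\tau_1)_+^{-\a}$ instead of $(\tau_1)_+^{-1-\a}$. The resolution is to use the $\tau_+^{-1-\a}$ factor already sitting in $H^2$ and recognize the resulting integral as $E^{1+\a}[Z^{k+1}\phi]_{\tau_1}^{\tau_2}$, which Proposition \ref{propILEDz6} bounds by $E_0 C_{\a,R}(\tau_1)_+^{-1-\a}$ precisely in the range $k+1\le 6$. This is also what forces the cutoff $k\le 5$. Summing all contributions yields the claimed estimate.
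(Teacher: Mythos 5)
Your proposal is correct and follows essentially the same route as the paper: the paper's proof simply invokes Lemma \ref{LLbphi} applied to $Z^k\phi$ (which is exactly the pointwise bound you re-derive from \eqref{eqZphi}), squares, weights by $r^{1-\ep}$, and bounds the result by $\sum_{l\le 6}I^\ep[Z^l\phi]_{\tau_1}^{\tau_2}+D^{\a_1}[F^k]_{\tau_1}^{\tau_2}$, all decaying at rate $(\tau_1)_+^{-1-\a}$. The one point where you go beyond the paper's two-line proof is the explicit treatment of the $\overline{\pa_v}Z^l\phi$ term: by keeping the full coefficient $H$ (rather than the cruder $\tau_+^{-1/2-\a/2}$ in the statement of Lemma \ref{LLbphi}) you can route that contribution through $\delta_0^2 E^{1+\a}[Z^l\phi]_{\tau_1}^{\tau_2}$, which is a genuinely needed clarification the paper leaves implicit.
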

\begin{proof}
Using Lemma \ref{LLbphi}, we can show that for all $k\leq 5$
\begin{align*}
\int_{\tau_1}^{\tau_2}\int_{\Si_\tau\cap \{r\geq 1\}}r^{1-\ep}|L\Lb Z^k\phi|^2dx
d\tau&\les \int_{\tau_1}^{\tau_2}\int_{\Si_\tau}\frac{|\pa
Z^k\phi|^2+|\pa Z^{k+1}\phi|^2}{(1+r)^{1+\ep}}dx
d\tau+D^{\a_1}[F^{k}]_{\tau_1}^{\tau_2}\\
&\les \sum\limits_{l\leq 6}I^\ep [Z^l\phi]_{\tau_1}^{\tau_2}+D^{\a_1}[F^{k}]_{\tau_1}^{\tau_2}\leq E_0 C_{\a, R} (\tau_1)_+^{-1-\a}.
\end{align*}
\end{proof}

\section{Pointwise estimates}
The corollary in the end of the last section plays an important role in showing the pointwise
estimates for the solution when $\{r\geq 1\}$.
Next we use this
integrated energy decay estimate together with the $p$-weighted energy
estimates proven in Proposition \ref{ILEthm}, \ref{ILEthmpw} to obtain the pointwise
estimates for the solution $\phi$ and hence to close the bootstrap
assumptions \eqref{boostphiout}, \eqref{boostphiinin},
\eqref{boostphiinout}. We divide our argument into several steps.

In the argument below, the notation $A\les B$ means $A\leq C_{\a, R}B$ for some constant $C_{\a, R}$ depending only on $\a$, $R$.

First we estimate $Z^k\phi$ for $k\leq 5$. Since $Z$ can be $\pa_t$
or $\Om$, from Proposition \ref{propILEDz6}, we can bound
$S^\ep[Z^k\phi](\tau)$ as follows
\[
S^\ep[Z^k\phi](\tau)\leq I^{\ep}[Z^k\phi]_{\tau}^{\tau+1}+I^{\ep}[Z^{k+1}\phi]_{\tau}^{\tau+1}\les E_0\tau_+^{-1-\a},\quad
\forall k\leq 5.
\]
In particular, the set \eqref{setT} defined in the proof of
Proposition \ref{propILED} is $[0, \infty)$ for all $Z^k\phi$,
$k\leq 5$. Hence the proof there implies that the energy decays for
all $\tau\geq 0$. That is
\begin{equation} \label{energybd}
 E[Z^k\phi](\tau)\les E_0\tau_+^{-1-\a},\quad \forall k\leq 5,\quad \forall \tau\geq 0.
\end{equation}
Here we have used the estimate \eqref{inductass} which holds for all
$l\leq 6$. Now let $\tau_1=0$ in the $p$-weighted energy inequality
\eqref{pwe1a} when $p=1+\a_1$, we have
\[
 g^{1+\a_1}[Z^k\phi](\tau)=\int_{S_\tau}r^{1+\a_1}|L(rZ^k\phi)|^2 dvd\om\les E_0 ,\quad \forall k\leq 5.
\]
From Lemma \ref{lempphi2}, we can show
\begin{equation*}
\int_{S_\tau}r^{1-\ep}|Z^k\phi|^2dvd\om\les E[Z^k\phi](\tau)+g^{1+\a_1}[Z^k\phi](\tau) \les E_0 ,\quad \forall k\leq 5.
\end{equation*}
The good derivative of the solution decays better. Quantitatively, from the previous estimate, we derive
\begin{equation}
 \label{r1aphi}
 \int_{S_\tau}r^{3-\ep}|LZ^k\phi|^2dvd\om \les g^{1-\ep}[Z^k\phi](\tau)+\int_{S_\tau}r^{1-\ep}|Z^k\phi|^2dvd\om\les E_0 ,\quad \forall k\leq 5.
\end{equation}
Using the decay estimates for the energy $E[Z^k\phi](\tau)$, $k\leq
5$, Lemma \ref{lem1} quickly yields the spherical average estimate
for $Z^k\phi$
\begin{equation}
\label{zkphi}
 \int_{\om}|Z^k\phi|^2(\tau, r,\om)d\om\les E[Z^k\phi](\tau)\les E_0 r^{-1}\tau_+^{-1-\a}, \quad \forall k\leq 5.
\end{equation}
Recall that $\nabb=r^{-1}\Om$. The above estimate in particular
implies the improved decay estimates for the angular derivative of
the solution
\begin{equation}
 \label{nabbzkphi}
\int_{\om}|\nabb Z^k\phi|^2d\om\leq r^{-2}\int_{\om}|\Om
Z^k\phi|^2d\om\les E_0 r^{-3}\tau_+^{-1-\a},\quad \forall
k\leq 4.
\end{equation}

\bigskip

Next, we estimate $L Z^k\phi$ and $\Lb Z^k\phi$ which we rely on
Corollary \ref{corimprol}. Corollary \ref{corimprol}
implies that
\begin{align}
\notag \int_{\Si_\tau\cap\{r\geq 1\}}r^{3-\ep}|L\Lb Z^k\phi|^2dvd\om&\leq \int_{\tau}^{\tau+1}\int_{\Si_\tau\cap\{r\geq 1\}}r^{1-\ep}(|L\Lb
Z^k\phi|^2+|L\Lb\pa_t
Z^k\phi|^2)dx d\tau\\
\label{llbzkphi} &\les E_0 (1+\tau)^{-1-\a},\quad \forall k\leq 4.
\end{align}
Recall that $L=2\pa_t-\Lb$ and $Z$ can be $\pa_t$. We can also obtain estimates for $LL Z^k\phi$. In fact, from estimate \eqref{r1aphi}, we can show that
\begin{align*}
& \int_{S_\tau}r^{3-\ep}|LLZ^k\phi|^2dvd\om\\
&\leq\int_{S_\tau}r^{3-\ep}|L\Lb Z^k\phi|^2dvd\om+
2\int_{S_\tau}r^{3-\ep}|\pa_{v}\pa_t Z^k\phi|^2dvd\om\les
E_0 ,\quad \forall k\leq 4.
\end{align*}
This estimate together with estimate \eqref{r1aphi} implies that
\begin{equation}
 \label{pavzkphi}
\int_{\om}|L Z^k\phi|^2d\om\les E_0  r^{-3+\ep}, \quad r\geq R,
\quad \forall k\leq 4.
\end{equation}
Here note that $\pa_v=L=\pa_t+\pa_r$. This estimate together with
\eqref{nabbzkphi} gives the estimate for $\overline{\pa_v}Z^k\phi$
when $r\geq R$, $k\leq 4$. To close the bootstrap assumptions, we also need to estimate $\Lb Z^k\phi$.

We consider $\Lb Z^k\phi$ on the larger domain $\Si_\tau\cap\{r\geq
1\}$. We first argue that
\begin{equation}
\label{decaynull} \liminf\limits_{v\rightarrow \infty}\int_{\om}|\Lb(Z^k\phi)|^2(u, v, \om) d\om=0, \quad k\leq 5.
\end{equation}
This follows immediately from the fact that
\[
\int_{S_\tau}\frac{|\pa Z^k\phi|^2}{(1+r)^{1+\ep}}r^2dvd\om \les E_0 \tau_+^{-1-\a},\quad k\leq 5.
\]
We can also see this as solutions of linear wave equations decays at null infinity. To solve our nonlinear equations, we see from the last
section of the previous chapter that we use Picard iteration process and approximate the solution by linear solutions. As linear solution decays
at null infinity, we have \eqref{decaynull}. Hence on $\Si_\tau\cap\{r\geq 1\}$, from the estimates \eqref{llbzkphi}, we can show that
\begin{align*}
 \int_{\om}|\Lb Z^k\phi|^2(\tau, v, \om)d\om&\leq\int_{v}^{\infty}\int_{\om}|L\Lb Z^k\phi|^2r^{3-\ep}dvd\om  \cdot
\int_{v}^{\infty}\int_{\om}r^{-3+\ep}dvd\om\\
&\les (2v-\tau+R)^{-2+\ep}\int_{\Si_\tau\cap\{r\geq 1\}}|L\Lb Z^k\phi|^2r^{1-\ep}dx d\om\\
&\les E_0 (1+\tau)^{-1-\a}r^{-2+\ep},\quad \forall k\leq 4,
\end{align*}
where we recall that $v=\frac{t+r}{2}=r+\frac{\tau-R}{2}$. We must
remark here that the above argument holds for $(\tau, v, \om)\in
S_\tau$. When $1\leq r\leq R$, splitting the integral into two
parts: integral on $S_\tau$ and integral on $r\leq R$, we can get
the same estimates. This gives the estimate for $\Lb Z^k\phi$ when
$r\geq 1$. Since $\Lb$, $Z=\{\pa_t, \Om\}$ can form a basis of the
tangent space at any point where $r\geq1$, a weaker estimate for
$\pa Z^k\phi$ can be that
\begin{equation}
\label{pazkphi}
\begin{split}
 \int_{\om}|\pa Z^k\phi|^2d\om &\les \int_{\om}|\pa_t Z^k\phi|^2+|\Lb Z^k\phi|^2d\om\\
 &\les E_0  r^{-1}(1+\tau)^{-1-\a},\quad r\geq
1,\quad \forall k\leq 4.
\end{split}
\end{equation}
This can be used to estimate $\pa Z^k\phi$ when $1\leq r\leq R$ (as $r\leq R$ we can improve the decay in $r$ to be $r^{-3+\ep}$ up to a
constant depending only on $R$).

\bigskip

The above discussion gives us the pointwise estimates (after using Sobolev embedding on the unit sphere) for the first order derivatives of the
solution. To close our bootstrap assumptions, we also need to estimate the second order derivative of the solution. We first consider the case
when $r\geq 1$. Note that
 \[
 |\pa^2 Z^k\phi|\les |L\Lb Z^k\phi|+|\pa
 Z^k\phi|+r^{-1}|Z^k\phi|,\quad r\geq 1.
 \]
 Thus to estimate the full second order derivative of the solution, it suffices to estimate $L\Lb Z^k\phi$. We rely on Lemma
 \ref{LLbphi}, which shows that it suffices to estimate $F^k$ (see the equation \eqref{eqZphi} for $Z^k\phi$) and notations thereafter.
 We see that $F^k$
 consists of $H_2^k-[Z^k, N]\phi$ satisfying the estimate \eqref{H2kzkn} in the proof of Lemma
 \ref{D3aH2k}, $Q_2^k$ satisfying estimate \eqref{Qzk} in the proof of Lemma
 \ref{D3aQ2k},
 $Q_1^k+H_1^k$ satisfying estimates given in the line
 \eqref{estH1k}. Therefore, we can estimate $F^k$ as follows
\begin{align*}
&\int_{\om}|F^k|^2d\om\les \sum\limits_{l\leq k}\int_{\om}r^{-2}|\pa
Z^l\phi|^2+|L\Lb
Z^{l-1}\phi|^2+H^2|\overline{\pa_v}Z^l\phi|^2d\om,\quad r\geq R;\\
&\int_{\om}|F^k|^2d\om\les \sum\limits_{l\leq k}\int_{\om}r^{-2}|\pa
Z^l\phi|^2+|L\Lb Z^{l-1}\phi|^2d\om,\quad 1\leq r\leq R.
\end{align*}
Note that we already have estimates for $\overline{\pa_v}Z^k\phi$ (see \eqref{pavzkphi}, \eqref{nabbzkphi}) and estimate for $\pa Z^k\phi$
(inequality \eqref{pazkphi}). Then from Lemma \ref{LLbphi}, we can bound
\begin{align*}
\int_{\om}|L\Lb Z^k\phi|^2d\om\les E_0 r^{-3}\tau_+^{-1-\a}+\sum\limits_{l\leq k-1}\int_{\om}|L\Lb Z^l\phi|^2d\om, \quad r\geq 1,\quad \forall
k\leq 3.
\end{align*}
As $F^0=0$, $Z^{-1}\phi=0$, we conclude from the above inequality (simply by an induction argument) that
\begin{equation*}
\int_{\om}|L\Lb Z^k\phi|^2d\om\les E_0 r^{-3}\tau_+^{-1-\a},\quad r\geq 1, \quad \forall k\leq 3.
\end{equation*}
These estimates are sufficient to obtain all the necessary $C^2$ estimates of the solution when $\{r\geq 1\}$. In fact, from the above
discussions, we have shown that
\begin{align*}
 &\int_{\om}|\pa\Lb Z^k\phi|^2d\om\les \int_{\om}|\Lb Z^{k+1}\phi|^2d\om+\int_{\om}|L\Lb Z^k\phi|^2d\om
\les E_0 r^{-2+\ep}(1+\tau)^{-1-\a},\\
&\int_{\om}|\pa \overline{\pa_v}Z^k\phi|^2d\om\les\int_{\om}|\overline{\pa_v}Z^{k+1}\phi|^2+ |L\Lb Z^k\phi|^2+|\Lb \nabb Z^k\phi|^2d\om\les E_0
r^{-3+\ep},\quad k\leq 3.
\end{align*}
Summarizing, we have shown that when $r\geq 1$
\begin{equation}
\label{pointwise1b}
\begin{split}
 &\sum\limits_{|k|\leq 4}\int_\om |\Lb Z^k\phi|^2d\om+\sum\limits_{|k|\leq 3}\int_\om |\pa\Lb Z^k\phi|^2d\om\leq C_{\a, R}E_0  (1+r)^{-2+\ep}\tau_+^{-1-\a},\\
&\sum\limits_{|k|\leq 4}\int_\om |\overline{\pa_v} Z^k\phi|^2d\om+\sum\limits_{|k|\leq 3}\int_\om |\pa\overline{\pa_v} Z^k\phi|^2d\om\leq C_{\a,
R} E_0
 (1+r)^{-3+\ep}\\
\end{split}
\end{equation}
for some constant $C_{\a, R}$ depending only on $R$ and $\a$. Here $\tau$ is the parameter for the foliation $\Si_\tau$. We can let
$\a<\frac{1}{10}$, $\ep<\frac{\a}{4}$. If we let
\[
 \ep_0=\frac{\delta_0}{\sqrt{C_{\a, R} }}, \quad E_0\leq \ep_0,
\]
then the bootstrap assumptions \eqref{boostphiout}, \eqref{boostphiinout} can be improved.

\bigskip

Finally we need to close the bootstrap assumption \eqref{boostphiinin} when $r\leq 1$. Since the angular momentum $\Om$ vanishes when $r=0$, we
can not get much information of the solution by commuting the equation with the angular momentum for small $r$. We instead rely on the vector
field $\pa_t$. Since we have estimates for $\pa_t\phi$, $\pa_{tt}\phi$, for fixed time $t$, we can consider the elliptic equation for $\phi$ and
use the robust elliptic theory to obtain the $C^2$ estimates for the solution on the compact region $r\leq 1$ (when $t$ is fixed). We will use
Schauder's estimates to show that the solution is bounded in $C^2$. We first use Sobolev embedding and $L^p$ elliptic theory to show the $C^1$
estimate for the solution.

Fix $\tau$. Lemma \ref{lemH2estin1} and Proposition \ref{propILEDz6}
imply that
\begin{equation}
\label{ZkphiH2} \|Z^k\phi\|_{H^2(B_2)}^2\les \int_{\tau}^{\tau+1}\|\pa_t Z^k\phi\|_{H^2(B_2)}^2d\tau\les E_0 (1+\tau)^{-1-\a},\quad \forall
k\leq 4.
\end{equation}
In particular using Sobolev embedding, we have
\begin{equation}
 \label{Cf12}
 \begin{split}
\|Z^k\phi\|_{C^\f12(B_2)}^2\les & E_0  (1+\tau)^{-1-\a},\quad
\forall k\leq 4.
\end{split}
\end{equation}
Here $B_{r}$ stands for the ball in $\mathbb{R}^3$ with radius $r$. Next we show the $C^1$ estimates. Let $\nabla=(\pa_{x_1}, \pa_{x_2},
\pa_{x_3})$. Commute the equation \eqref{eqZphi} with $\nabla$. We have the elliptic equation for $\nabla Z^k\phi$, $k\leq 3$
\begin{align}
\notag
 \Delta (\nabla Z^k\phi)+&(g^{ij\ga}\pa_\ga\phi+h^{ij}) \cdot \pa_{ij}(\nabla Z^k\phi)=\nabla (F^k-Q^k-N^k)+\pa_{tt}(\nabla Z^k\phi)\\
\notag
 &-
 Q(\nabla\phi, Z^k\phi)-\nabla h^{\mu\nu}\pa_{\mu\nu}Z^k\phi-2h^{i0}\pa_i \nabla \pa_t Z^k\phi-h^{00}\pa_t\pa_t
Z^k\phi\\
\label{elleqquasi} &-2g^{0i\ga}\pa_\ga\phi\cdot \pa_i \nabla\pa_t Z^k\phi-g^{00\ga}\pa_\ga\phi\cdot \nabla \pa_t\pa_t Z^k\phi.
\end{align}
Here $\Delta$ is the Laplacian operator in $\mathbb{R}^3$. The bootstrap assumptions \eqref{boostphiinin}, \eqref{boostphiinout} on $\phi$ as
well as the assumption \eqref{HHqu} on $h^{\mu\nu}$ imply that
 \[
m_0^{kl}+g^{ij\ga}\pa_\ga\phi+h^{ij}
\] is uniformly elliptic.
We want to show that the right hand side of the above elliptic equation is bounded in $L^2(B_{r_k})$ for some $r_k\in(1, 2)$. For $\nabla
(F^k-Q^k-N^k)$, by the definitions, it consists of two parts: null form which is quadratic in $Z^l\phi$ and $H_1^k+H_2^k-N^k$ contributed by the
metric perturbation $h^{\mu\nu}$. The later one is easy to estimate as the bound on $Z^l h^{\mu\nu}$ is given. Since we have the $H^2(B_2)$
bound for $Z^k\phi$, $k\leq 4$ (estimate \eqref{ZkphiH2}), we can show that
\[
\|\nabla (H_1^k+H_2^k-N^k)\|_{L^2(B_{r_k})}^2\les E_0(1+\tau)^{-1-\a}+\sum\limits_{l<k}\|\nabla Z^l\phi\|_{H^2(B_{r_k})}^2,\quad k\leq 3.
\]
For the quadratic terms from the null form, using Lemma \ref{nullstr}, it suffices to consider $Q(Z^{k_1}\phi, Z^{k_2}\phi)$, $k_1+k_2\leq k\leq
3$, $k_2\leq k-1$ (this also includes $Q^k=Q(Z^k\phi, \phi)$). We first have
\[
|\nabla Q(Z^{k_1}\phi, Z^{k_2}\phi)|\les |\pa \nabla
Z^{k_1}\phi||\pa^2 Z^{k_2}\phi|+|\pa Z^{k_1}\phi|| \pa^2\nabla
Z^{k_2}\phi|.
\]
Since $k_1+k_2\leq 3$, without loss of generality, we may assume that $k_1\leq 1$. Then the bootstrap assumptions \eqref{boostphiinin},
\eqref{boostphiinout} show that $|\pa^2 Z^{k_1}|\les 1$. As $k_2\leq k-1\leq 2$, we conclude that $\pa^2 Z^{k_2}$ is bounded in $L^2(B_2)$. That
is the first term is bounded in $L^2(B_2)$. For the second term, we always have
\[
\int_{\om}|\pa Z^{k_1}\phi|^2|\pa^2 \nabla Z^{k_2}\phi|^2d\om\les
\sum\limits_{l\leq 3}\int_{\om}|\pa Z^l\phi|^2d\om\cdot
\sum\limits_{l\leq k-1}\int_{\om}|\pa^2 \nabla Z^l\phi|^2d\om.
\]
In any case, we can show that
\[
\|\nabla Q(Z^{k_1}\phi, Z^{k_2}\phi)\|_{L^2(B_{r_k})}^2\les
E_0 (1+\tau)^{-1-\a}+\sum\limits_{l\leq k-1}\|\nabla
Z^l\phi\|_{H^2(B_{r_k})}
\]
This gives the estimate for $\nabla (F^k-Q^k-N^k)$. For the other terms on the right hand side of the above elliptic equation \eqref{elleqquasi}
for $\nabla Z^k\phi$, their $L^2(B_{r_k})$ norm  can be bounded by $\sqrt{E_0}\tau_+^{-\f12-\f12\a}$ (up to a constant depending only on $\a$,
$R$) as $Z^k\phi$ is bounded in $H^2(B_{2})$. Therefore the elliptic theory shows that
\begin{align*}
 \|\nabla Z^k\phi\|_{C^\f12(B_{r_{k}'})}^2 &\les\|\nabla Z^k\phi\|_{H^2(B_{r_k}')}^2\\
 &\les
\sum\limits_{l\leq k-1}\|\nabla Z^{l}\phi\|_{H^2(B_{r_k})}^2+E_0 \tau_+^{-1-\a}, \quad 1<r_k'<r_k<2.
\end{align*}
If we choose $1<r'_k<r_k<r'_{k-1}\leq 2$, $r_0=2$ then the above
estimate implies that
\[
 \|\pa Z^k\phi\|_{C^\f12(B_{r_3'})}^2 \les E_0 (1+\tau)^{-1-\a},\quad \forall k \leq
 3,\quad r_3'>1.
\]
In particular, this gives the $C^1$ estimates for the solution when
$\{r\leq 1\}$.

Finally, we use the $C^{1, \f12}(B_{r_3'})$ estimates for $Z^k\phi$,
$k\leq 3$ to show the $C^2$ estimates of the solution. We now
consider the elliptic equation for $Z^k\phi$, $k\leq 2$
\begin{align*}
 \Delta Z^k\phi+&(g^{ij\ga}\pa_\ga\phi+h^{ij})\cdot \pa_{ij}Z^k\phi=
 F^k-Q^k-N^k+\pa_{tt}Z^k\phi\\&-2(g^{0i\ga}\pa_{\ga}\phi+h^{0i})\cdot \pa_i\pa_t Z^k\phi
 -(g^{00\ga}
\pa_\ga\phi+h^{00})\cdot \pa_{tt}Z^k\phi.
\end{align*}
Similarly, by the definition of $F^k$, $Q^k$, $N^k$, we can estimate their $C^{\f12}$ norm as follows
\[
\|F^k-Q^k-N^k\|_{C^\f12(B_{s_k})}^2\les E_0 \tau_+^{-1-\a}+\sum\limits_{l\leq k-1}\|Z^l\phi\|_{C^{2, \f12}(B_{s_k})}^2,\quad 1<s_k<r_3'.
\]
For the other terms on the right hand side of the above elliptic
equation for $Z^k\phi$, we already have estimates of $Z^k\phi$,
$k\leq 3$ in $C^{1, \f12}(B_{r_3'})$ and estimates of $Z^k\phi$,
$k\leq 4$ in $C^{\f12}(B_{r_3'})$. Hence Schauder's estimates imply
that for all $s_k'<s_k$
\[
 \|Z^k\phi\|_{C^{2, \f12}(B_{s_k'})}^2\les \sum\limits_{l\leq k+2}\|Z^k\phi\|_{C^\f12(B_{s_k})}^2+\sum\limits_{l\leq k-1}\|Z^{l}\phi\|_{C^{2, \f12}(B_{s_k})}
+\sqrt{E_0}\tau_+^{-\f12-\f12 \a}.
\]
If we choose $s_0=r_3'$, $1<s_k'<s_k<s_{k-1}'\leq r_3'$, then we
have
\[
 \|Z^k\phi\|_{C^{2, \f12}(B_{s_2'})}^2\les E_0 \tau_+^{-1-\a},\quad \forall k\leq 2.
\]
In particular, this yields the $C^2$ estimates for $Z^k\phi$, $k\leq
2$ when $r\leq 1<s_2'$. To summarize, we have shown that
\begin{equation}
\label{pointwise1l}
 \sum\limits_{|k|\leq 3}|\pa Z^k\phi|^2+\sum\limits_{|k|\leq 2}|\pa^2 Z^k\phi|^2\leq C_{\a, R} E_0 \tau_+^{-1-\a},\quad r\leq 1
\end{equation}
for some constant $C_{\a, R}$ depending only on $R$ and $\a$. Without loss of generality, we may assume this constant is the same as the one in
\eqref{pointwise1b}. If
\[
 \ep_0\leq \frac{\delta_0}{\sqrt{C_{\a, R}}},\quad E_0\leq \ep_0,
\]
then the bootstrap assumptions, \eqref{boostphiinin}, \eqref{boostphiinout}, \eqref{boostphiinin} are improved. We thus closed all the bootstrap
assumptions.

\section{Proof of the main theorem}
Since the initial data have compact support, the finite speed of propagation for solutions of wave equations implies that the solution of the
quasilinear wave equation \eqref{QUASIEQsim} vanishes when $r\geq t+R$. Like what we did in the end of \cite{yang1}, we can run
the same Picard iteration process and the above argument shows that limiting solution (may be local in time) $\phi$ of the quasilinear wave
equation \eqref{QUASIEQsim} is bounded in $C^2$. Then by using the fact see e.g. \cite{hormander} that as long as the solution is
bounded in $C^2$, the solution exists globally. We thus proved the small data global existence result for quasilinear wave equations. Moreover,
the solution $\phi$ satisfies the estimates \eqref{pointwise1b}, \eqref{pointwise1l}. Using Sobolev embedding, we conclude that the solution
$\phi$ satisfies the estimates as claimed in Theorem \ref{maintheorem}. We thus proved Theorem \ref{maintheorem}.

\bigskip

For the global stability of large solutions, we first note that it
is reduced to a small data global existence result for the following
quasilinear wave equation
\[
\Box\phi+g^{\mu\nu\ga}\pa_\ga\phi\pa_{\mu\nu}\phi+g^{\mu\nu\ga}\pa_\ga\Phi\pa_{\mu\nu}\phi+g^{\mu\nu\ga}\pa_{\mu\nu}\Phi\pa_\ga\phi=0.
\]
For sufficiently small initial data, we can always solve this equation up to
time $t_0$ if the given solution $\Phi$ satisfies condition
\eqref{stabcondweak}. Then starting from time $t_0$, condition
\eqref{stabcondweak} together with Lemma \ref{nullstr} shows that
the assumptions of Theorem \ref{maintheorem} hold. Since the first
order linear term $g^{\mu\nu\ga}\pa_{\mu\nu}\Phi\pa_\ga\phi$ has a
null structure, from Theorem \ref{maintheorem}, we can
conclude Theorem \ref{stabquasi}.

\bibliography{shiwu}{}
\bibliographystyle{plain}

\bigskip

DPMMS, Centre for Mathematical Sciences, University of Cambridge,
Wilberforce Road, Cambridge, UK CB3 0WA

\textsl{Email address}: S.Yang@damtp.cam.ac.uk
 \end{document}